\newtheorem{thm}{Theorem}[section] 
\newtheorem{lem}[thm]{Lemma}     
\newtheorem{cor}[thm]{Corollary}
\newtheorem{prop}[thm]{Proposition}
\newtheorem{cond}{Conditions}
\numberwithin{equation}{section} 
\numberwithin{thm}{section}
\newtheorem{remark}[thm]{Remark}
\newcommand{\N}{\mathbb{N}}
\newcommand{\R}{\mathbb{R}}
\newcommand{\C}{\mathbb{C}}
\newcommand{\Z}{\mathbb{Z}}
\newcommand{\T}{\mathbb{T}}
\newcommand{\vektor}[1]{\mathbf{#1}}
\newcommand{\av}{\vektor{a}}
\newcommand{\bv}{\vektor{b}}
\newcommand{\nv}{\vektor{n}}
\newcommand{\Nv}{\vektor{N}}
\newcommand{\mv}{\vektor{m}}
\newcommand{\xv}{\vektor{x}}
\newcommand{\yv}{\vektor{y}}
\newcommand{\hv}{\vektor{h}}
\newcommand{\rv}{\vektor{r}}
\newcommand{\tv}{\vektor{t}}
\newcommand{\tri}{\chi}
\newcommand{\alv}{\boldsymbol{\alpha}}
\newcommand{\bev}{\boldsymbol{\beta}}
\newcommand{\gamv}{\boldsymbol{\gamma}}
\newcommand{\einsv}{\boldsymbol{1}}
\providecommand{\lcm}{\mathop{\rm lcm}\nolimits}
\newcommand{\Sing}{{\mathfrak S}}
\newcommand{\ma}{{\mathfrak m}}
\newcommand{\Ma}{{\mathfrak M}}
\title[A quadric in dense variables]
 {On a diagonal quadric in dense variables}
\author{Eugen Keil}
\begin{document}
\maketitle

\begin{abstract}
We examine the solubility of a diagonal, translation invariant, quadratic equation system
in arbitrary (dense) subsets $\mathcal{A} \subset \Z$ and show quantitative bounds on the size of $\mathcal{A}$
if there are no non-trivial solutions.
We use the circle method and Roth's density increment argument.
Due to a restriction theory approach we can deal with equations in $s \geq 7$ variables.
\end{abstract}

\section{Introduction} \label{introduction}


Diophantine equations with an underlying symmetry group appear naturally in many
number theoretic questions and their structure can often be exploited to obtain
solutions in situations where standard methods fail.
This is one of the main reasons why translation invariant systems have attracted so
much interest. 
The important special case of linear systems was approached by methods ranging from 
Fourier-analysis \cite{Roth} over ergodic theory \cite{Fur2} to (additive) 
combinatorics \cite{Gow}, \cite{Szem} and gave rise
to recent amazing developments in the theory of linear equations in the primes \cite{GrTao1}.

A first attempt to introduce non-linear terms was made by Lov\'asz. He conjectured that
\begin{align*}
x_1 - x_2 = y^2
\end{align*}
with $x_1,x_2 \in \mathcal{A} \subset \N$ and $y \in \N$ has solutions for any dense set $\mathcal{A}$.
This was proven independently by Furstenberg \cite{Fur2} and S\'ark\"ozy \cite{Sar}, and a simplified
proof can be found in \cite{Gr2}.
Generalisations of this result replace $y^2$ by more general polynomials
and consider more equations. An important result arising from this work is the polynomial Szemer\'edi theorem
of Bergelson and Leibman \cite{BerLeib}.

Quite remarkably, the case of higher degree in the variables $x_i$ was neglected until
Smith \cite{Smith} considered a family of translation invariant quadratic equation systems
\begin{equation} \label{eq-sys}
\begin{split} 
\lambda_1 x_1^2 + \lambda_2x_2^2 + \ldots + \lambda_s x_s^2 = 0, \\
\lambda_1 x_1 + \lambda_2x_2 + \ldots + \lambda_s x_s = 0,
\end{split}
\end{equation}
where $\lambda_i \in \Z$ and the $x_i$ are restricted to an arbitrary 
set $\mathcal{A} \subset \{1,2,\ldots, N\}$. 
For a translation invariant system we expect non-trivial solutions if
the set $\mathcal{A}$ has many elements. We ask the quantitative question: 
Assume there are only trivial solutions to \eqref{eq-sys}, what can we say about the size of $\mathcal{A}$?

Smith \cite{Smith} uses the circle method and uniformity norms to obtain an 
upper bound on the cardinality of $\mathcal{A}$ of size 
$CN(\log\log N)^{-c}$ for some $C,c > 0$ as long as $s \geq 9$.
In other words, any set with larger cardinality has non-trivial solutions.
Our goal here is to improve on his work by removing the uniformity norms from the argument and 
reducing the number of variables down to $s \geq 7$.
This is done under the following natural conditions.
\begin{cond}\ \\
(i) $\lambda_1 + \lambda_2 + \ldots + \lambda_s = 0$,\\
(ii) $s \geq 7$ and $\lambda_i \neq 0$ for all $1 \leq i \leq s$,\\
(iii) there are at least two positive and at least two negative coefficients $\lambda_i$.
\end{cond}

The first condition ensures translation invariance of the system, which is the key property in
problems of this type, as explained above. 
If system \eqref{eq-sys} is not translation invariant, it is easy to construct dense
sets without non-trivial solutions by using divisibility obstructions.
Choose for instance $\mathcal{A}$ to be the set of numbers congruent to $1$ modulo $n$ for a 
natural number $n > |\lambda_1 + \lambda_2 + \ldots + \lambda_s|$.

Before we discuss the other two conditions and give a historical overview, we state the main theorem of this work.
Write $|\mathcal{A}|$ for the cardinality of $\mathcal{A}$ and call a solution with $x_i = x_j$ for some $i \neq j$
a \emph{trivial solution} of \eqref{eq-sys}.

\begin{thm} \label{dio-thm}
Assume that the conditions above hold and system \eqref{eq-sys} has only trivial solutions 
for $x_i \in \mathcal{A} \subset \{1,2,\ldots, N\}$.
Then we have the bound $|\mathcal{A}| \leq CN (\log\log N)^{-1/15}$ for some constant $C$, which depends 
on the coefficients $\lambda_i$ of the system.
\end{thm}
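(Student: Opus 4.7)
The plan is to apply the Hardy-Littlewood circle method in two variables, replace the uniformity-norm input of Smith's paper with an $L^q$ restriction estimate for the quadratic exponential sum, and then iterate a Roth-style density increment on Bohr sets. The natural generating function is
\begin{align*}
f(\alpha, \beta) = \sum_{x \in \mathcal{A}} e(\alpha x^2 + \beta x),
\end{align*}
so that the number of solutions of \eqref{eq-sys} in $\mathcal{A}^s$ equals
\begin{align*}
T(\mathcal{A}) = \int_0^1 \int_0^1 \prod_{i=1}^s f(\lambda_i \alpha, \lambda_i \beta)\, d\alpha\, d\beta.
\end{align*}
I would partition $\T^2$ into major arcs $\mathfrak{M}$, namely small neighbourhoods of pairs of rationals with small denominator, and minor arcs $\mathfrak{m} = \T^2 \setminus \mathfrak{M}$, and split $T(\mathcal{A}) = T_{\mathfrak{M}}(\mathcal{A}) + T_{\mathfrak{m}}(\mathcal{A})$. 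Trivial solutions---those with $x_i = x_j$ for some $i \neq j$---must be enumerated and handled separately; because the system remains translation invariant on each diagonal stratum, their contribution reduces inductively to counts for systems of the same type in fewer variables.

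On the major arcs, a local-to-global analysis using Conditions (i)--(iii) produces positivity of the singular series and singular integral, yielding a main term of the expected shape $c\, \delta^s N^{s-3}$ with $\delta = |\mathcal{A}|/N$ and $c > 0$, possibly after first translating $\mathcal{A}$ into a residue class where local solvability holds. The minor arc estimate is the principal technical obstacle and the place where the restriction-theoretic input enters. Combining a Weyl-type pointwise bound $\sup_{(\alpha,\beta)\in\mathfrak{m}} |f(\alpha,\beta)| \leq \eta \,\delta N$, which holds outside suitable Bohr sets, with a density-sensitive restriction estimate of the shape
\begin{align*}
\int_{\T^2} |f(\alpha, \beta)|^q\, d\alpha\, d\beta \ll (\delta N)^{q-3 + o(1)},
\end{align*}
valid for exponents near the paraboloid threshold $q = 6$, and applying H\"older's inequality to the $s$ factors of the circle method integral, one obtains a bound $T_{\mathfrak{m}}(\mathcal{A}) \ll \eta^{\theta}\, \delta^s N^{s-3}$ for some $\theta > 0$. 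The H\"older numerology forces the constraint $s \geq q + 1$, which with $q$ just above $6$ matches the hypothesis $s \geq 7$.

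If, on the other hand, the Weyl bound on $\mathfrak{m}$ fails, then $|f(\alpha_0, \beta_0)|$ is anomalously large at some $(\alpha_0, \beta_0)$ off the major arcs, and a Fourier-structural argument localises $\mathcal{A}$ on a Bohr set defined by both linear and quadratic frequencies, on which its density rises to at least $(1 + c_0)\delta$. Iterating this density increment $O(\log 1/\delta)$ times must terminate with either $\delta$ exceeding a fixed constant---which in turn forces the main term to dominate the trivial count and minor arc error, yielding a non-trivial solution and a contradiction---or the Bohr set shrinking below a critical size. The loss of a constant factor in the Bohr radius and the gain of bounded additive rank at each step translate the iteration budget into the stated bound $|\mathcal{A}| \leq CN(\log\log N)^{-1/15}$, with the exponent $1/15$ emerging from the combined numerology of the H\"older exponents in the minor arc step and the rank-radius arithmetic in the iteration.
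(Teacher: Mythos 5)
Your outline names the right three ingredients (circle method, a restriction estimate near exponent $6$, a Roth iteration), but the parts that carry the actual content of the theorem are either asserted without proof or set up in a way that would not deliver the stated result. First, the restriction estimate is the entire point: the paper's work goes into proving $\int_{\T^2}|V_g(\alv)|^p\,d\alv \ll_p N^{p-3}$ for every $|g|\le 1$ and $p>6$ (Theorem \ref{SA-est}, obtained from the general Theorem \ref{Lp-est} together with a major-arc decomposition of the representation function $R(\mv)$ and the estimates of Propositions \ref{L28-est} and \ref{arith-est}). You instead posit a strictly stronger, density-sensitive bound $\int|f|^q \ll (\delta N)^{q-3+o(1)}$ with no argument; nothing of that form is proved or needed here — the factor $\delta^{s-7}$ is extracted separately from the trivial bound $|V_g|\le 2\delta N$. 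Second, your H\"older numerology fails to reach $s=7$: putting one whole factor in $L^\infty$ and the remaining $q=s-1$ factors in $L^{s-1}$ requires $s-1>6$, i.e.\ $s\ge 8$. The paper gets to $7$ only by splitting a factor fractionally, bounding each error term by $\sup_{\alv}|V_f(\alv)|^{1/2}\int_{\T^2}|V_g(\alv)|^{s-1/2}\,d\alv$ with $s-1/2=6.5>6$. This half-power trick is not cosmetic; without it you reprove a weaker range of $s$.

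The dichotomy and the iteration are also not set up correctly. For an arbitrary dense set you cannot run a local-to-global analysis of $V_{\mathcal A}$ on the major arcs to produce a main term $c\,\delta^sN^{s-3}$; the distribution of $\mathcal A$ in residue classes and subintervals is unknown. The paper's dichotomy is global: write $V_{\mathcal A}=\delta V+V_f$ with $f$ the balanced function, prove $Z(N)\gg N^{s-3}$ for the \emph{full} interval by the circle method (Proposition \ref{Low}, where Conditions (i)--(iii) enter), and conclude that either all error terms are small or $\sup_{\alv\in\T^2}|V_f(\alv)|\gg\delta^{14}N$ — with the supremum allowed anywhere, major arcs included. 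Finally, the increment obtainable from a coefficient of size $\delta^{14}N$ is additive, $\delta\mapsto\delta+d\delta^{14}$, so the iteration runs for $\asymp\delta^{-14}$ steps, not $O(\log(1/\delta))$; and it must land on a genuine arithmetic progression, because the affine invariance of \eqref{eq-sys} is exactly what makes the problem self-similar on a progression after rescaling — a Bohr set cut out by a quadratic frequency does not obviously inherit this, and building a local theory there is a substantial separate undertaking. The paper instead uses Heilbronn's bound on $\|\alpha q^2\|$ to cover $\{1,\dots,N\}$ by progressions of length $\gg\delta^{28}N^{1/16}$ on which $e(\alpha_2n^2+\alpha_1n)$ is nearly constant (Lemma \ref{exp-prog}); the combination of the step count $\delta^{-14}$ with the length loss $N\mapsto N^{1/16}$ is precisely what yields the exponent $1/15$ in $(\log\log N)^{-1/15}$, which your sketch leaves unexplained.
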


The bound $s \geq 7$ in condition $(ii)$ seems to be best possible in what can be achieved 
by this type of Fourier-analytic methods.
If we assume the existence of non-trivial integer solutions of \eqref{eq-sys}, then a qualitative
version of Theorem \ref{dio-thm} can be deduced from Szemeredi's theorem for any $s \geq 4$
as follows.
First we locate an arithmetic progression in $\mathcal{A}$ of length $2k+1$. By homogeneity and translation invariance of 
\eqref{eq-sys}, we can rescale to $\{-k, \ldots, k\}$ without changing the shape of the system. 
When $k$ is large enough, there are non-trivial solutions by assumption on the system and we are done.

To see why condition $(iii)$ is needed, we observe that
conditions $(i)$ and $(ii)$ imply that there is at least one positive and at least one negative coefficient in \eqref{eq-sys}.
But in situations where we have exactly one negative (positive) coefficient $\lambda_i$, we have only
trivial solutions. To see this, we use the translation invariance. If there is a non-trivial solution,
there has to be one with $x_i = 0$. The remaining quadratic equation is 
positive (negative) definite and this forces the other variables to be zero as well.
Therefore, condition $(iii)$ is necessary and we use it to ensure the existence of a non-singular real
solution to \eqref{eq-sys} in our application of the circle method.

It is possible to derive a simple corollary from Theorem \ref{dio-thm}.

\begin{cor}
Under the conditions above, a diagonal quadratic form
\begin{align*}
\lambda_1 x_1^2 + \lambda_2x_2^2 + \ldots + \lambda_s x_s^2 = 0
\end{align*}
has non-trivial solutions in any set $\mathcal{A} \subset \{1,2,\ldots, N\}$
of density $|\mathcal{A}|/N \geq C (\log\log N)^{-1/15}$ for a constant $C$, which depends
on the coefficients $\lambda_i$ of the system.
\end{cor}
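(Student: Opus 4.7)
The plan is to derive the corollary as an almost immediate consequence of Theorem \ref{dio-thm}. The key observation is that the full system \eqref{eq-sys} is strictly more restrictive than the single quadratic equation: any tuple satisfying both the linear and the quadratic equation in \eqref{eq-sys} automatically satisfies the quadratic one. Moreover, the notion of \emph{trivial solution} used in Theorem \ref{dio-thm}—namely that $x_i = x_j$ for some $i \neq j$—carries over unchanged to the standalone quadratic form, so a non-trivial solution of the system is \emph{a fortiori} a non-trivial solution of $\sum_i \lambda_i x_i^2 = 0$.

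Concretely, I would argue by contraposition. Suppose $\mathcal{A} \subset \{1,2,\ldots,N\}$ satisfies $|\mathcal{A}|/N \geq C (\log\log N)^{-1/15}$, where $C$ is chosen at least as large as the constant supplied by Theorem \ref{dio-thm}. Then $|\mathcal{A}| \geq C N (\log\log N)^{-1/15}$, which contradicts the conclusion of the theorem unless its hypothesis fails. Hence the system \eqref{eq-sys} must admit a non-trivial solution $(x_1,\ldots,x_s) \in \mathcal{A}^s$, and this tuple is in particular a non-trivial zero of the diagonal quadratic form.

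There is no real obstacle to overcome—the entire analytic content lives inside Theorem \ref{dio-thm}, and the corollary is a direct logical consequence. The only point worth flagging is that Conditions (i)--(iii) remain needed: Condition (i) is exactly what ensures that constant tuples $(c,c,\ldots,c)$ solve $\sum_i \lambda_i x_i^2 = 0$ by cancellation of coefficients, so that the ``non-trivial'' designation is genuinely meaningful for the quadratic equation in isolation; Conditions (ii) and (iii) are inherited verbatim from the hypotheses of the theorem.
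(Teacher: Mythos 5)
Your argument is correct and is exactly the intended (and only natural) deduction: the paper offers the corollary as an immediate consequence of Theorem \ref{dio-thm}, since a non-trivial solution of the full system \eqref{eq-sys} is in particular a non-trivial zero of the quadratic form alone. The contrapositive reading of the theorem, with $C$ enlarged if necessary to absorb the inequality direction, is all that is needed.
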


It is apparent that better quantitative estimates in Theorem \ref{dio-thm} lead to a wider range of 
sets $\mathcal{A}$ which are covered by the result. One of the major (widely open) goals
is to improve on those bounds such that they cover interesting 
number theoretic sets, such as the primes. (One can handle the corollary in the case of prime numbers by classical techniques already for $s \geq 5$.) 
 
The general interest in quantitative estimates goes back to a conjecture of Erd\H{o}s and Turan \cite{ErTu} from 1936.
Does every subset 
\begin{align*}
\mathcal{A} = \{a_1 < a_2 < a_3~<~\ldots\}~\subset~\N
\end{align*}
contain an arithmetic progression of arbitrary length $k$, if all
we know is that 
\begin{align*}
\sum_{i = 1}^{\infty} \frac{1}{a_i} = \infty?
\end{align*}
The first progress was made by Roth \cite{Roth}, who showed that in the absence of
non-trivial arithmetic progressions of length three the size of $\mathcal{A}$
in an interval of length $N$ is bounded by $CN(\log\log N)^{-1}$ for some constant $C > 0$.
There have been several improvements on Roth's theorem, for example by
Heath-Brown \cite{HB}, who obtained an upper bound $|\mathcal{A}| \leq CN\log^{-c}N$
for some small $c > 0$ and later Bourgain \cite{Bour3}
with a bound of the form $|\mathcal{A}| \leq CN(\log N)^{-2/3}(\log\log N)^2$. 
A recent breakthrough is the result $|\mathcal{A}| \leq CN(\log N)^{-1}(\log\log N)^5$ of Sanders \cite{San}, 
which is remarkable since an estimate of the form $CN(\log N)^{-1}(\log\log N)^{-2}$
would be enough to settle the conjecture of Erd\H{o}s and Turan in the case $k = 3$.

In the case of general $k$-term progressions, where the classical Fourier-analytic approach fails,
the bounds are even weaker.
Szemeredi \cite{Szem} proved by a complicated combinatorial method that 
$|\mathcal{A}|/N \to 0$ as $N \to \infty$, but his method gave no useful bound on $|\mathcal{A}|$.
Gowers \cite{Gow} gave a quantitative bound of size $C_kN(\log\log N)^{-c_k}$ for arbitrary $k \geq 4$
by using `uniformity norms' and a variant of Freiman's theorem.
As mentioned earlier, there is an ergodic theoretic approach by
Furstenberg \cite{Fur2} (see \cite{Fur} for an exposition), who gave a simpler proof for Szemeredi's result 
without any explicit bound at all. 

Our method here is essentially the one used by Roth. There seem to be no obvious 
generalisation of the other methods to improve on the density estimate in our result, but it is reasonable
to conjecture that the $(\log\log N)^{-1/15}$ is still far away from the truth.

\emph{Acknowledgements:}
This paper is part of the author's Ph.D. thesis and he would like to express many thanks to 
his supervisor Trevor Wooley for his time, support and motivation.
The author also would like to thank the referee for comments that helped to improve the exposition.
The author's doctoral studies were partially supported by EPSRC.

\section{Notation and Remarks} \label{notation}

Many sections of this paper are independent of each other and it is convenient
to introduce some of the notation at the beginning of each section, where it is needed. 
At this point we focus on the most important things, which are relevant throughout the paper.

Write $e(x) = \exp(2\pi i x)$ and $e_q(x) = e(x/q)$ for $q \in \N$.
We use the usual notation $f = O(g)$ to express that $|f| \leq Cg$ for some constant $C > 0$ and similarly
Vinogradov's notation $f \ll g$. We indicate dependencies 
on parameters by subscripts as in $O_p(N^s)$ or $\ll_{P, \epsilon}$, for example.
We write $\widehat{f}$ for the Fourier transform of a function $f:\R \to \R$. 
Bold face letters such as $\xv$ denote vectors with components $x_i$ and $\xv \cdot \yv$ is the usual scalar product.
Inequalities such as $\xv \leq P$ or $\xv \leq \yv$ should be understood componentwise.
Write $(a;q)$ and $(\av;q) := (a_1;a_2;q)$ for the greatest common divisor of the components 
and $\lcm(q_1,\ldots,q_k)$ for the least common multiple of $q_1, \ldots, q_k$.
A sum over natural numbers starts at one, if not otherwise indicated and
we use the following abbreviations in our summations.
\begin{align*}
\sum_{(a;q)=1} := \sum_{\substack{a=1\\(a;q)=1}}^q
\mbox{\qquad and \qquad} \sum_{(\av;q)=1} := \sum_{a_1 = 1}^q\sum_{\substack{a_2=1\\(\av;q)=1}}^q
\end{align*}

The symbol $\T$ is used to refer to the `circle' $\R/\Z$ with the circle norm
$\|\alpha\| := \min\{|\alpha - z|: z \in \Z\}$,
the distance of $\alpha \in \R$ to the nearest integer.
Let $f: \T^d \to \C$ be a function on $\T^d$ and $g: B \to \C$ a function with support inside a unit box 
$B = \prod_{i = 1}^d [y_i,y_i+1] \subset \R^d$.
The expression 
\begin{align*}
\int_{\T^d} f(\alv) g(\alv) \,d\alv
\end{align*} 
should be understood as the integral over $B$. 
We take the notational freedom to identify $\T$ with different specific intervals
of length one, such as $[0,1]$ or $[-1/2,1/2]$, for example.
This is well defined due to the one-periodicity of $f$.

The parameter $N \in \N$, governing the size of the variables $x_i$ in \eqref{eq-sys}, is the most important
parameter in this paper and should be thought of as large.
The parameter $Q \in \N$ is used for the major-minor-arc decomposition and we assume
throughout the paper that $64 \, Q^{52} \leq N$. It will be set to a small power of $N$ eventually.

Now we define the main objects of our study, which are certain quadratic exponential sums
associated to the system \eqref{eq-sys}.
For a function $g: \N \to \C$ and $\mathcal{A} \subset \{1,2,\ldots,N\}$ we define
\begin{align} \label{eq-Sg}
V_g(\alv) = \sum_{n \leq N} g(n) e(\alpha_2 n^2 + \alpha_1 n) \mbox{\ \ and \ }
V_\mathcal{A}(\alv) = \sum_{n \in \mathcal{A}} e(\alpha_2 n^2 + \alpha_1 n)
\end{align}
and write $V(\alv)$ for the sum without the weight function $g$. The local versions
\begin{align} \label{eq-v}
v(\alv) = \int_{0}^N e(\alpha_2 t^2 + \alpha_1 t) \,dt, \qquad V(q,\av) = \sum_{r = 1}^q e_q(a_2 r^2 + a_1 r).
\end{align}
are needed in the argument as well as the linear exponential sum 
$L(\alv) := L_{3N}(\alpha_1)L_{3N^2}(\alpha_2)$, where $L_M$ is the usual one-dimensional version
\begin{align} \label{lin-exp}
L_M(\alpha) = \sum_{n \leq M} e(\alpha n).
\end{align}
As is common in the circle method approach, the sums $V_\mathcal{A}$ are used to 
encode the number of solutions to system \eqref{eq-sys} as an integral \eqref{eq-sys-sol}.
To evaluate it, we need to bound $L^p$-means of the exponential sums $V_g$ for arbitrary $|g| \leq 1$.
Due to the presence of arbitrary coefficients in \eqref{eq-Sg}, 
Weyl-differencing techniques are not available and Parseval's identity works only for $p \geq 8$.
This is the main reason why the result of Smith \cite{Smith} gives Theorem \ref{dio-thm} only for $s \geq 9$.
Here we use methods inspired by `restriction theory' instead to prove
the following result on $L^p$-estimates of these exponential sums, which
allows us to reduce the number of variables in our analysis down to $s \geq 7$.

\begin{thm} \label{SA-est}
Let $V_g(\alv)$ be defined as in \eqref{eq-Sg} for a function $g$ with $|g(n)| \leq 1$. 
Then for $p > 6$, we have
\begin{align*}
\int_{\T^2} |V_g(\alv)|^p \,d\alv \ll_p N^{p-3}.
\end{align*}
\end{thm}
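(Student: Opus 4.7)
\emph{Plan.} The strategy is to establish the bound first at $p=8$ via Parseval, extend it to $p\geq 8$ by a trivial $L^\infty$ interpolation, and then handle the delicate range $6<p<8$ by a restriction-theoretic argument.

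At $p=8$, expanding the eighth power and applying orthogonality on $\T^2$ yields
\begin{align*}
\int_{\T^2}|V_g(\alv)|^8 \, d\alv = \sum_{\substack{n_1+\cdots+n_4=m_1+\cdots+m_4 \\ n_1^2+\cdots+n_4^2=m_1^2+\cdots+m_4^2}} g(n_1)\cdots g(n_4)\,\overline{g(m_1)\cdots g(m_4)},
\end{align*}
which, by $|g|\leq 1$, is bounded in absolute value by the number of solutions of the associated Vinogradov system with $k=2$ and $s=4$. Hua's classical bound estimates this count by $\ll N^{5}$, so $\int|V_g|^8 \ll N^{5}=N^{8-3}$. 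For $p\geq 8$ the claim then follows from $\|V_g\|_\infty \leq N$ via $\int|V_g|^p \leq \|V_g\|_\infty^{p-8}\int|V_g|^8 \ll N^{p-3}$.

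For $6<p<8$ the argument is more delicate. A naive Hölder interpolation between $L^8$ and the critical $L^6$ bound $\int|V|^6 \ll N^3\log N$ (the $k=2$, $s=3$ Vinogradov mean value, which carries a genuine logarithmic loss at the critical exponent) would only yield $\int|V_g|^p \ll N^{p-3}(\log N)^c$, which falls short of the claim. To remove the logarithm I would dissect $\T^2$ into major arcs $\Ma(Q)$ around rationals $\av/q$ with $q\leq Q$ and their complement $\ma(Q)$, for $Q$ a small power of $N$ depending on $p$. On the major arcs one approximates $V_g$ by the product of the local factor $q^{-1}V(q,\av)$ and a smooth integral depending on $g$, so that the $L^p$-moment factorises into the singular series $\sum_q\sum_{(\av;q)=1}|V(q,\av)/q|^p$ — which by the Gauss-sum estimate $|V(q,\av)|\ll q^{1/2}$ is dominated by $\sum_q q^{2-p/2}$ and converges precisely when $p>6$ — and a singular integral bounded by $O(N^{p-3})$ via a standard scaling argument. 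On the minor arcs one combines a Weyl-type pointwise saving $|V_g(\alv)|\ll N/Q^{c}$ with $\int|V_g|^8\ll N^5$ through Hölder to absorb that contribution into $O(N^{p-3})$.

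The main obstacle is the minor-arc analysis for $V_g$ with an arbitrary weight $|g|\leq 1$: Weyl's inequality applies directly only to the unweighted sum $V$, so one must reduce $V_g$ to $V$ via partial summation (or a Cauchy--Schwarz argument exploiting $|g|\leq 1$) without degrading the saving on $Q$. Balancing this minor-arc loss against the major-arc main term is what determines the choice of $Q$ and pins down the constraint $p>6$.
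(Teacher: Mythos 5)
Your treatment of $p\geq 8$ is correct: orthogonality reduces $\int_{\T^2}|V_g|^8$ to the number of solutions of the $s=4$, $k=2$ Vinogradov system, which is $\ll N^5$ by Hua, and trivial interpolation with $\|V_g\|_\infty\leq N$ handles larger $p$. You have also correctly located where the threshold $p>6$ comes from (convergence of $\sum_q q^{2-p/2}$ via the Gauss-sum bound $|V(q,\av)|\ll q^{1/2}$). But the range $6<p<8$ is the entire content of the theorem, and there your plan has a fatal gap, which is exactly the obstacle you flag at the end but do not overcome: for an \emph{arbitrary} weight $|g|\leq 1$ there is no minor-arc pointwise saving $|V_g(\alv)|\ll NQ^{-c}$ and no major-arc approximation of $V_g$ by local factors. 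Taking $g(n)=e(-\alpha_2 n^2-\alpha_1 n)$ for any fixed minor-arc point $\alv$ gives $V_g(\alv)=N$, so $\sup_{\alv\in\ma}|V_g(\alv)|=N$ in the worst case; Weyl differencing, partial summation and Cauchy--Schwarz all fail to recover any power saving because $g$ carries no structure. This is precisely why Smith's argument needed $s\geq 9$ (i.e.\ Parseval at $p=8$), and a direct circle-method dissection of $V_g$ itself cannot reach $p<8$.

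The paper's route avoids ever estimating $V_g$ pointwise. One dualizes: writing $V_g^3$ as an exponential sum with majorant $R(\mv)=\#\{\yv\leq N: m_1=\sum y_i,\ m_2=\sum y_i^2\}$ and bounded weight $h$, a restriction-type inequality (Theorem \ref{Lp-est}, proved by the $T T^*$/Cauchy--Schwarz--H\"older scheme of Green and Green--Tao) bounds $\|W_h\|_{p/3}$ by the weighted $L^2$ quantity $\sum |h|^2 R\leq N^3$ times a sum over pieces of a decomposition of $R$. All the circle-method work --- major/minor arcs, Weyl-type bounds, Gauss sums, divisor estimates --- is then applied to the \emph{unweighted} objects $V^3$ and $R$, where those tools are legitimately available (Propositions \ref{L28-est} and \ref{arith-est}). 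If you want to complete your argument, this dualization is the missing idea; without it the minor-arc step cannot be repaired.
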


Theorem \ref{SA-est} follows from a more general result of Bourgain \cite[Prop. 3.114]{Bour2}.
We give here a new proof, which differs from that of Bourgain in some technical and 
conceptual points and has strong parallels with the approach in \cite{Gr}, where
similar estimates played a major role in the proof of Roth's theorem in the primes.
In our case we deduce it from Theorem \ref{Lp-est}, which is a general theorem
for handling $L^p$-means of exponential sums.

While Theorem \ref{SA-est} is a corollary of a known result, 
the proof methods below also give variants of Theorem \ref{SA-est},
which are not covered by Bourgain's work. Similarly, Theorem \ref{Lp-est} seems to be a 
new result and might prove useful in similar situations.

The main ingredient in the proof of Theorem \ref{SA-est} is a decomposition of the representation function
\begin{align} \label{R-def}
R(\mv) := \#\{(y_1,y_2,y_3) \leq N: m_1 = y_1 + y_2 + y_3, m_2 = y_1^2 + y_2^2 + y_3^2 \},
\end{align}
where $\mv \in \N^2, \yv \in \N^3$ and $\#$ denotes the number of elements in the set.
This decomposition will be achieved by using ideas from the circle method and
correspond to a major-minor-arc decomposition of the corresponding exponential sum.
The main work goes into obtaining good control on the arithmetical and analytical 
properties of the different parts, which appear in the decomposition of $R$.

For this task we introduce the triangular function
\begin{align} \label{eq-tri}
\tri(x) = \max\{1-|x|,0\}
\end{align}
and a `continuous indicator function' of $[-P,P]$ with support $[-2P,2P]$ by
\begin{align*}
\psi_P(\alpha) = 2\tri(\alpha/(2P))-\tri(\alpha/P).
\end{align*}
The two-dimensional version that we use is
\begin{align} \label{eq-ind}
\psi(\alv) = \psi_{Q/N}(\alpha_1) \cdot \psi_{Q^2/N^2}(\alpha_2),
\end{align}
where $Q$ and $N$ are the parameters introduced before.

\section{Proof of Theorem \ref{dio-thm}} \label{sec-Beweis}

For a set $\mathcal{A} \subset \{1,2,\ldots, N\}$ write $\einsv_{\mathcal{A}}$ for the indicator function of $\mathcal{A}$ and
$\delta = N^{-1}|\mathcal{A}|$ for the density of $\mathcal{A}$. 
We define the balanced function $f$ by
\begin{align} \label{balanced}
f(n) := \einsv_{\mathcal{A}}(n) - \delta.
\end{align}
Write $Z(N)$ for the number of solutions to \eqref{eq-sys} with $x_i \in \{1,2,\ldots, N\}$ 
and $Z_{\mathcal{A}}(N)$ for the corresponding quantity with $x_i \in \mathcal{A}$.

The proof of Theorem \ref{dio-thm} uses the density increment strategy of Roth \cite{Roth}
and is straightforward, once we have the necessary ingredients.
Let us assume Theorem \ref{SA-est} and the estimate $Z(N) \gg N^{s-3}$ from Proposition \ref{Low}
to give a proof of Theorem  \rm\ref{dio-thm}.

\begin{proof}[Proof of Theorem  \rm\ref{dio-thm}]
The number of solutions $Z_{\mathcal{A}}(N)$ of system \eqref{eq-sys} with variables $x_i \in \mathcal{A}$
can be written as the integral
\begin{align}\label{eq-sys-sol}
Z_{\mathcal{A}}(N) = \int_{\T^2} \prod_{i = 1}^s V_{\mathcal{A}}(\lambda_i\alv) \,d\alv.
\end{align}
Use the decomposition $V_{\mathcal{A}} = \delta V + V_f$ and
expand the product above. We are led to a decomposition into a sum of the main term
\begin{align*}
\delta^s Z(N) = \delta^s \int_{\T^2} \prod_{i = 1}^s V(\lambda_i\alv) \,d\alv
\end{align*}
and $2^s-1$ `error terms' of the form
\begin{align*}
\pm \int_{\T^2} V_f(\lambda_{i_s}\alv) \prod_{j = 1}^{s-1} V_{g_j}(\lambda_{i_j} \alv) \,d\alv,
\end{align*}
where $g_j = \delta$ or $g_j = f$ and $\lambda_{i_1},\ldots,\lambda_{i_s}$ is a permutation of $\{\lambda_1,\ldots,\lambda_s\}$. 
By H\"older's inequality, each error term is bounded up to a constant by
\begin{align} \label{eq-intdec}
H_g = \sup_{\alv} |V_f(\alv)|^{1/2} \int_{\T^2} |V_g(\alv)|^{s-1/2} \,d\alv
\end{align}
for some function $g \in \{\delta, f\}$. The coefficients ($\lambda_i \neq 0$) disappeared by a change of variables 
and the 1-periodicity of the exponential sums.
By Theorem \ref{SA-est} and the trivial bound $|V_g(\alv)| \leq 2 \delta N$ 
the integral in formula \eqref{eq-intdec} is $O(\delta^{s-7} N^{s-1/2-3})$ and hence
\begin{align*}
H_g \ll  \delta^{s-7} N^{s-1/2-3} \sup_{\alv} |V_f(\alv)|^{1/2}.
\end{align*}

Proposition \ref{Low} provides a lower bound $Z(N) \gg N^{s-3}$ for the main term
and H\"older's inequality with Lemma \ref{R2-est} give an upper bound 
$Z_{\mathcal{A}}(N) \ll \delta^{s-7} N^{s-4} \log N$ 
to the number of trivial solutions to the system \eqref{eq-sys}.
(The number of trivial solutions is bounded by $\binom{s}{2}$ times the number of 
solutions to a system with $s-1 \geq 6$ variables.)
Combining the inequalities for $Z(N),Z_{\mathcal{A}}(N),H_g$ and plugging them into 
$\delta^s Z(N) \leq 2^sH_g + Z_{\mathcal{A}}(N)$, we end up with
\begin{align*}
\delta^s N^{s-3} \ll \delta^{s-7} N^{s-1/2-3} \sup_{\alv} |V_f(\alv)|^{1/2} + \delta^{s-7} N^{s-4} \log N.
\end{align*}
A short calculation gives the lower bound
\begin{align*}
\sup_{\alv} |V_f(\alv)| \gg \delta^{14} N
\end{align*}
as long as $N \geq D/\delta^{8}$ for some $D > 1$ (depending on the coefficients $\lambda_i$).
This large Fourier coefficient can be used with Lemma
\ref{exp-prog} to find a progression $P$ of length at least $\delta^{28} N^{1/16}$, such that $\mathcal{A}$ has density
at least $\delta + d\delta^{14}$ on $P$, where $d > 0$ is an absolute constant.
Due to the translation and dilation invariance of the system \eqref{eq-sys}, we end up with the same
problem on a subprogression, but with a slightly higher density.

Since the density is bounded by one, this procedure cannot last more than $d^{-1}\delta^{-14}$ steps 
before reaching a contradiction.
This means that at some stage we have a non-trivial solution or the size of our progression is
getting smaller than $D/\delta^{8}$. The first option is not available by assumption.
Therefore, we have 
\begin{align*}
\delta^{30}N^{(1/16)^{d^{-1}\delta^{-14}}} \leq D/\delta^{8},
\end{align*}
where $D$ depends only on the coefficients of the system.
Rearranging for $\delta$ we can deduce that $\delta \ll 1/\log\log^c(N)$ with $c = 1/15$, for example.
\end{proof}

\begin{remark}
A more careful analysis of the $L^p$-estimates below can yield a better value for $c$.
We haven't optimized because our bound on $|\mathcal{A}|$ is far away from the expected
order of magnitude anyway.
\end{remark}

\section{A general theorem for $L^p$-estimates} \label{sec-LP}

Theorem \ref{Lp-est} below and its proof are strongly connected to 'restriction theory', a subfield
of harmonic analysis studying the behaviour of the Fourier transform under restriction to
a given subset. Recent years have seen several applications of ideas from restriction theory
to number theory, which has led to a better understanding of prime numbers.

The idea for the proof of Theorem \ref{Lp-est} is inspired by the 
papers of Green \cite{Gr} and Green and Tao \cite{GrTao2},
where the connection to restriction estimates can be seen more explicitely. 
The new ingredient in our proof is the use of recursion, which allows us to work under 
slightly weaker assumptions on the involved functions.
Therefore, our result applies to a different range of functions than Proposition 4.2 in \cite{GrTao2}.

For $\Nv = (N_1,\ldots,N_d) \in \N^d$ define $[1,\Nv] := \prod_{i = 1}^d \{n \in \N: n \leq N_i\}$.
Let $\omega : [1,\Nv] \to [0,\infty)$ be a positive real function and $f: [1,\Nv] \to \C$
be a `random' weight function. We study the exponential sums
\begin{align*}
W(\alv) = \sum_{\nv \leq \Nv} \omega(\nv) e(\alv \cdot \nv ) \quad \mbox{ and } \quad
W_f(\alv) = \sum_{\nv \leq \Nv} f(\nv) \omega(\nv) e(\alv \cdot \nv ).
\end{align*}
Consider a general decomposition of $\omega$ and $W(\alv)$ into
\begin{align*}
W(\alv) = \sum_{j \in J} W_j(\alv) \quad \mbox{ and } \quad
\omega(\nv) = \sum_{j \in J} \omega_j(\nv),
\end{align*} 
where $J$ is an index set and $W_j$ is the exponential sum for $\omega_j$.
Define the $L^p$-norms by
\begin{align*}
\|\omega\|_p := \Big(\sum_{\nv \leq \Nv} |\omega(\nv)|^p \Big)^{1/p} \mbox{\ and \ } 
\|W\|_p := \Big(\int_{\T^d} |W(\alv)|^p \,d\alv \Big)^{1/p}_.
\end{align*} 
The typical function $\omega$, of interest for us, obeys $\|\omega\|_1 = O(N)$ but $\|\omega\|^2_2 \neq O(N)$.
One-dimensional examples are the function counting the 
number of representations as sum of two squares and the von Mangoldt function.

\begin{thm} \label{Lp-est}
For $p > 2$ and $\Nv \in \N^d$ we have the estimate
\begin{align*}
\|W_f\|_p \leq \Big(\sum_{\nv \leq \Nv} |f(\nv)|^2 \omega(\nv) \Big)^{1/2} 
\Big(\sum_{j \in J} \|W_j\|_{p}^{(p-2)/p} \|\omega_j\|^{2/p}_{2p/(p-2)} \Big)^{1/2}_.
\end{align*} 
\end{thm}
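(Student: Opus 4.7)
The plan is to combine $L^p$--$L^{p'}$ duality, a weighted Cauchy--Schwarz step, and a Riesz--Thorin interpolation on each piece of the decomposition. First, by duality,
\begin{align*}
\|W_f\|_p = \sup_{\|g\|_{p'} = 1} \Big|\sum_\nv f(\nv) \omega(\nv) \overline{\hat g(\nv)}\Big|.
\end{align*}
Applying Cauchy--Schwarz with respect to the weight $\omega$, and using $\omega = \sum_j \omega_j$, yields
\begin{align*}
\Big|\sum_\nv f(\nv) \omega(\nv) \overline{\hat g(\nv)}\Big|^2 \leq \Big(\sum_\nv |f(\nv)|^2 \omega(\nv)\Big) \sum_j \sum_\nv |\hat g(\nv)|^2 \omega_j(\nv).
\end{align*}
This already isolates the first factor of the theorem; it remains to prove, for every $g$ with $\|g\|_{p'} \leq 1$ and every $j$,
\begin{align*}
\sum_\nv |\hat g(\nv)|^2 \omega_j(\nv) \leq \|W_j\|_p^{(p-2)/p} \|\omega_j\|_{2p/(p-2)}^{2/p}.
\end{align*}

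Next, I would rewrite the left-hand side as the $L^2(\T^d)$ inner product $\langle g, S_j g\rangle$, where $S_j h := h * W_j$ is convolution by $W_j$; this follows from Parseval together with the identity $S_j g(\alv) = \sum_\nv \hat g(\nv) \omega_j(\nv) e(\alv \cdot \nv)$. Hölder's inequality then bounds this inner product by the operator norm:
\begin{align*}
\sum_\nv |\hat g|^2 \omega_j \leq \|g\|_{p'} \|S_j g\|_p \leq \|S_j\|_{L^{p'} \to L^p}.
\end{align*}

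The heart of the argument is to bound $\|S_j\|_{L^{p'} \to L^p}$ by Riesz--Thorin interpolation between two endpoint estimates. For the $L^\infty$ target, a pointwise Hölder on the convolution integral gives $|S_j g(\alv)| \leq \|g\|_{p'} \|W_j\|_p$, so $\|S_j\|_{L^{p'} \to L^\infty} \leq \|W_j\|_p$. For the $L^2$ target, Plancherel yields $\|S_j g\|_2^2 = \sum_\nv |\hat g(\nv)|^2 \omega_j(\nv)^2$; applying Hölder with exponents $p/2$ and $p/(p-2)$, together with the Hausdorff--Young bound $\|\hat g\|_p \leq \|g\|_{p'}$ (valid since $p > 2$), gives $\|S_j\|_{L^{p'} \to L^2} \leq \|\omega_j\|_{2p/(p-2)}$. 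Riesz--Thorin interpolation between these two estimates at $\theta = 2/p$ lands at the target $L^p$ (since $1/p = \theta/2$) and produces exactly $\|S_j\|_{L^{p'} \to L^p} \leq \|W_j\|_p^{1-\theta} \|\omega_j\|_{2p/(p-2)}^{\theta}$, matching the exponents demanded by the theorem.

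The main conceptual obstacle is choosing the right pair of interpolation endpoints. A direct application of Young's convolution inequality gives only the weaker bound $\|S_j\|_{L^{p'} \to L^p} \leq \|W_j\|_{p/2}$, a single Hausdorff--Young argument gives only $\|\omega_j\|_{p/(p-2)}$, and their geometric mean by log-convexity still falls short of the theorem. Routing $S_j$ separately through the $L^\infty$ and $L^2$ target spaces is what cleanly separates the dependence on $W_j$ from that on $\omega_j$, after which the exponents fall out naturally from the Riesz--Thorin step.
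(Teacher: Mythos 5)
Your proof is correct and follows essentially the same route as the paper: a weighted Cauchy--Schwarz step to isolate $\bigl(\sum_{\nv} |f(\nv)|^2\omega(\nv)\bigr)^{1/2}$, followed by the same two endpoint bounds $\|W_j * g\|_\infty \le \|W_j\|_p\|g\|_{p'}$ and $\|W_j * g\|_2 \le \|\omega_j\|_{2p/(p-2)}\|g\|_{p'}$ via H\"older, Plancherel and Hausdorff--Young. The only packaging difference is that the paper tests against the explicit dual function $h = W_f|W_f|^{p-2}$ and uses the elementary log-convexity $\|F\|_p^p \le \|F\|_\infty^{p-2}\|F\|_2^2$ of a single function, whereas you take a supremum over $g$ with $\|g\|_{p'}=1$ and invoke Riesz--Thorin for the operator $g \mapsto g * W_j$; both yield identical exponents.
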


\begin{remark}
We reduced the estimation of an exponential sum integral with an arbitrary weight function $f$ 
to one expression which involves a weighted $L^2$-norm of $f$, and another one with a decomposition of $\omega$. 
The first factor is easily estimated in our context and the second factor can be written as
\begin{align*}
\left( \sum_{j \in J} \Big( \int_{\T^d} |W_j(\alv)|^p \,d\alv \sum_{\nv \leq \Nv} 
|\omega_j(\nv)|^{2p/(p-2)} \Big)^{(p-2)/p^2} \right)^{1/2}_,
\end{align*} 
where it is easier to see what kind of expressions we need to estimate. 
\end{remark}

\begin{proof}
Write as an abbreviation
\begin{align} \label{Hp-def}
H_p := \int_{\T^d} |W_f(\alv)|^p \,d\alv
\end{align}
for the integral which has to be estimated.
Decompose $|W_f(\alv)|^p = W_f(\alv) \overline{h(\alv)}$ where $h(\alv) = W_f(\alv) |W_f(\alv)|^{p-2}$
and observe the identity
\begin{align} \label{identity}
H_p = \int_{\T^d} |h(\alv)|^{p'} \,d\alv,
\end{align}
where $p' = p/(p-1)$ is the dual exponent of $p$.
We expand the integral \eqref{Hp-def} and apply the Cauchy-Schwarz-inequality to obtain
\begin{align*}
H_p & = \int_{\T^d} |W_f(\alv)|^p \,d\alv = 
\sum_{\nv \leq \Nv} f(\nv) \omega(\nv)  \int_{\T^d} \overline{h(\alv)} e(\alv \cdot \nv) \,d\alv \\
& \leq \Big(\sum_{\nv \leq \Nv} |f(\nv)|^2 \omega(\nv) \Big)^{1/2} 
\Big(\sum_{\nv \leq \Nv} \omega(\nv) \Big| \int_{\T^d} \overline{h(\alv)} e(\alv \cdot \nv) \,d\alv \Big|^2 \Big)^{1/2}_.
\end{align*}
Write $F$ as an abbreviation for the first factor on the right hand side, which is already in the form we want it to be.
By opening the square in the second factor and changing the order of integration and summation, we see that
\begin{align*}
\sum_{\nv \leq \Nv} \omega(\nv) \Big| \int_{\T^d} \overline{h(\alv)} e(\alv \cdot \nv) \,d\alv \Big|^2 =
\int_{\T^d} \int_{\T^d} \overline{h(\alv)} h(\bev) W(\alv - \bev) \,d\bev d\alv.
\end{align*}
This expression is bounded with H\"older's inequality by
\begin{align*}
\Big(\int_{\T^d} |h(\alv)|^{p'} \,d\alv \Big)^{1/p'}
\Big(\int_{\T^d} \Big|\int_{\T^d} h(\bev)  W(\alv - \bev) \,d\bev \Big|^p d\alv \Big)^{1/p}_.
\end{align*}
By \eqref{identity} we can write this as $H_p^{1/p'} \|W * h\|_p$ where the convolution $*$ is defined as usual.
By using the previous formulae and rearranging, we have shown that
\begin{align} \label{eq-Hp}
H^{2-1/p'}_p \leq F^2 \|W * h\|_p.
\end{align}
The decomposition into functions $W_j$ and the triangle inequality imply
\begin{align*}
\|W * h\|_p \leq \sum_{j \in J} \|W_j * h\|_p.
\end{align*}
In the next step, we interpolate to obtain an estimate for the $L^p$-norms from $L^\infty$- and $L^2$-estimates, so that
\begin{align*}
\|W_j * h\|^p_p \leq \|W_j * h\|^{p-2}_{\infty} \|W_j * h\|^2_2.
\end{align*}
This estimate can be seen more easily by writing the left hand side as an integral.
The $L^{\infty}$- and $L^2$-norms are more accessible and
are treated separately in what follows.
For the $L^\infty$-part we have by H\"older's inequality the estimate
\begin{align*}
\|W_j * h\|_{\infty} \leq \|W_j\|_{p} \|h\|_{p'}.
\end{align*}
For the $L^2$-norm we apply Parseval's identity followed by another application of H\"older's inequality
and the Hausdorff-Young inequality, and hence deduce the upper bound
\begin{align*}
\|W_j * h\|_{2} = \|\omega_j \cdot \widehat{h} \|_{2} \leq  \|\omega_j\|_{2p/(p-2)} \|\widehat{h}\|_{p}
\leq  \|\omega_j\|_{2p/(p-2)} \|h\|_{p'}.
\end{align*}
Here $\widehat{h}: \Z^d \to \C$ denotes the Fourier transform of $h$ on $\T^d$.
Putting these estimates in \eqref{eq-Hp} gives
\begin{align*}
H^{2-1/p'}_p
&\leq F^2 \sum_{j \in J} \|W_j * h\|^{(p-2)/p}_{\infty} \|W_j * h\|^{2/p}_2 \\
&\leq F^2 \sum_{j \in J} (\|W_j\|_{p} \|h\|_{p'})^{(p-2)/p} (\|h\|_{p'} \|\omega_j\|_{2p/(p-2)})^{2/p} \\
&\leq F^2 \|h\|_{p'} \sum_{j \in J} \|W_j\|_{p}^{(p-2)/p} \|\omega_j\|^{2/p}_{2p/(p-2)}.
\end{align*}
Now we use \eqref{identity} and $2-2/p'=2/p$ to obtain
\begin{align*}
H_p^{1/p} \leq F \Big(\sum_{j \in J} \|W_j\|_{p}^{(p-2)/p} \|\omega_j\|^{2/p}_{2p/(p-2)} \Big)^{1/2}_.
\end{align*}
\end{proof}

\section{Lemmata}

\emph{More Notation:}  In this section we make use of indicator functions. For a set 
$\mathcal{B}$ we write $\einsv_{\mathcal{B}}$ for the function that is one on $\mathcal{B}$
and zero otherwise. When $\Phi$ is a mathematical statement, then we write $\einsv_{\Phi}$
for the indicator function of the set, where $\Phi$ is true. 

Before we start with the technical core of this work in the next section, 
we collect and prove a few useful lemmata. The reader might want to skip
this section on the first reading and proceed directly to Section \ref{sec-decom}.

The first result summarizes two estimates for the function $R$.

\begin{lem} \label{R2-est}
For $R(\mv)$ as in \eqref{R-def} we have
\begin{align*}
\sum_{\mv \leq 3(N,N^2)} R(\mv)^2 \ll N^3 \log N \quad \mbox{ and } \quad |R(\mv)| \ll_{\epsilon} N^{\epsilon}.
\end{align*}
\end{lem}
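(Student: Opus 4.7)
The plan is to handle the two estimates by a combination of elementary divisor bounds and a representation-by-quadratic-form argument.

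For the pointwise bound $|R(\mv)| \ll_\epsilon N^\epsilon$, I would fix $y_3 \in [1,N]$ and observe that the remaining conditions $y_1+y_2 = m_1-y_3$ and $y_1^2+y_2^2 = m_2-y_3^2$ determine $(y_1,y_2)$ up to order as roots of a quadratic polynomial with discriminant
\begin{align*}
D(y_3) = 2(m_2 - y_3^2) - (m_1 - y_3)^2 = -3y_3^2 + 2m_1 y_3 + (2m_2 - m_1^2).
\end{align*}
Integer solutions require $D(y_3) = k^2$ for some integer $k\geq 0$. Multiplying by $3$ and completing the square in $y_3$ rewrites this as
\begin{align*}
(3y_3 - m_1)^2 + 3k^2 = 6m_2 - 2m_1^2 =: M.
\end{align*}
So the number of admissible $y_3$ is bounded by the number of representations of $M$ by the positive definite binary form $u^2 + 3v^2$. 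By the classical theory of this form (equivalently, unique factorization in the Eisenstein integers $\Z[\omega]$), this number is $O(d(M)) \ll_\epsilon M^\epsilon \ll_\epsilon N^\epsilon$. Multiplying by the factor of $2$ for the choice of $(y_1,y_2)$ gives the stated bound.

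For the second-moment estimate, I would interpret
\begin{align*}
\sum_{\mv} R(\mv)^2 = \#\bigl\{(\yv,\zv) \in [1,N]^6 : y_1+y_2+y_3 = z_1+z_2+z_3,\ y_1^2+y_2^2+y_3^2 = z_1^2+z_2^2+z_3^2\bigr\}
\end{align*}
and introduce the substitution $a_i = y_i - z_i$, $u_i = y_i + z_i$, which turns the system into the two linear constraints $a_1+a_2+a_3=0$ and $a_1 u_1 + a_2 u_2 + a_3 u_3 = 0$. I would split by the equality pattern of $(u_1,u_2,u_3)$. When all three $u_i$ coincide, the second equation is automatic, and for each common value $u \ll N$ there are $O(u^2)$ choices of $(a_1,a_2)$, contributing $O(N^3)$. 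When exactly two of the $u_i$ coincide, one of the $a_i$ is forced to vanish, and a direct count again gives $O(N^3)$.

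The main obstacle is the case where $u_1,u_2,u_3$ are all distinct. Here the two linear relations force $(a_1,a_2,a_3)$ to lie on the line spanned by $(u_2-u_3,\,u_3-u_1,\,u_1-u_2)$, so the solutions are parametrized by a single integer $t$ with $|t| \ll N\cdot d/\max_i |u_j - u_k|$, where $d$ is the gcd of the entries. Summing over $(u_1,u_2,u_3)$ and using the standard estimate
\begin{align*}
\sum_{0 < |v|, |w| \leq 2N} \frac{\gcd(v,w)}{\max(|v|,|w|)} \ll \sum_{v\neq 0} \frac{\sigma_0(v)\cdot |v|}{|v|} \ll N \log N,
\end{align*}
(derived from $\sum_{|w|\leq |v|}\gcd(v,w) \leq 2|v|\sigma_0(|v|)$) produces the bound $O(N^3 \log N)$ for this case, which dominates the whole sum. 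The delicate point is extracting exactly the logarithmic factor, rather than a polynomial loss, from the divisor sum.
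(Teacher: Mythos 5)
Your proof is correct, but it is genuinely more self-contained than the paper's: the paper simply cites Rogovskaya's asymptotic formula (or Huxley, Lemma 11.2.1) for the second moment, and for the pointwise bound cites Huxley's Lemma 11.1.1, which bounds $R(\mv)$ by nine times the number of divisors of $3m_2-m_1^2$, handling the degenerate case $3m_2=m_1^2$ by noting it forces $y_1=y_2=y_3$. Your reduction of the pointwise bound to representations of $M=6m_2-2m_1^2=2(3m_2-m_1^2)$ by the form $u^2+3v^2$ is essentially a direct proof of that cited lemma (the class number of discriminant $-12$ is one, so the representation count is $O(d(M))$); you should just note explicitly that when $M=0$ the form represents it only trivially, giving at most one admissible $y_3$, which recovers the paper's degenerate case. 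Your second-moment argument via $a_i=y_i-z_i$, $u_i=y_i+z_i$, the case split on coincidences among the $u_i$, and the divisor sum $\sum_{|w|\le|v|}\gcd(v,w)\le 2|v|\sigma_0(|v|)$ is a clean elementary substitute for the cited asymptotic; the only bookkeeping to add is the $t=0$ (diagonal $\yv=\zv$) contribution of $O(N^3)$ in the distinct-$u_i$ case, which is harmless. What your route buys is independence from the references; what the paper's route buys is brevity and, via Rogovskaya, the sharp constant $18/\pi^2$, which is not needed here.
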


\begin{proof}
Rogovskaya \cite{Rog} showed the asymptotic formula 
\begin{align*}
\sum_{\mv \leq 3(N,N^2)} R(\mv)^2 = 18/\pi^2 N^3 \log N + O(N^3).
\end{align*}
But the estimate may be derived from the elementary Lemma 11.2.1 in \cite{Hux} as well.

For the second estimate, Lemma 11.1.1 in \cite{Hux} bounds $R(\mv)$ by nine times the number of divisors of $3m_2-m_1^2$.
By the standard divisor bound, we obtain the second formula as long as $3m_2 \neq m_1^2$.
In the case $m_1^2 = 3m_2$ we can show by elementary calculations that we have $y_1 = y_2 = y_3$ in \eqref{R-def}.
This gives us at most one solution for such a pair $(m_1,m_2)$.
\end{proof}

The next lemma gives us control on the Fourier coefficients of $\psi(\alv) v(\alv)^3$ 
(see \eqref{eq-v} and \eqref{eq-ind}) and
is a crucial tool to estimate the `analytic part' in the decomposition of $R(\mv)$.

\begin{lem} \label{v-est3}
For $\mv \in \N^2$ and $\T^2 = [-1/2,1/2]^2$ we have
\begin{align*}
\int_{\T^2} \psi(\alv) v(\alv)^3 e(-\alv \cdot \mv) \,d\alv \ll 1.
\end{align*}
\end{lem}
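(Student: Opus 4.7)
The plan is a Fourier-duality argument. I would begin by expanding
\[
v(\alv)^3 = \int_{[0,N]^3}e\bigl(\alpha_1(t_1+t_2+t_3)+\alpha_2(t_1^2+t_2^2+t_3^2)\bigr)\,d\tv
\]
and swapping the order of integration. Writing $u_1 := t_1+t_2+t_3-m_1$ and $u_2 := t_1^2+t_2^2+t_3^2-m_2$, the quantity to bound becomes $\int_{[0,N]^3}K(u_1,u_2)\,d\tv$, where $K(u_1,u_2) := \int_{\T^2}\psi(\alv)e(\alpha_1u_1+\alpha_2u_2)\,d\alv$. Since $\psi$ is supported in the rectangle $[-2Q/N,2Q/N]\times[-2Q^2/N^2,2Q^2/N^2]$, and the standing hypothesis $64Q^{52}\leq N$ places this strictly inside $[-\tfrac12,\tfrac12]^2$, the $\T^2$-integral coincides with the Euclidean Fourier integral; hence $K(u_1,u_2) = \widehat{\psi_{Q/N}}(-u_1)\widehat{\psi_{Q^2/N^2}}(-u_2)$.

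Next I would show that $\|\widehat{\psi_P}\|_{L^1(\R)}$ is bounded by an absolute constant, independent of $P$. Because $\psi_P$ is a linear combination of two triangle functions, and the Fourier transform of the triangle function is a $\operatorname{sinc}^2$, a direct computation yields $\widehat{\psi_P}(\xi) = P\,g(P\xi)$ for a single function $g\in L^1(\R)$ that does not depend on $P$; a change of variables then gives $\|\widehat{\psi_P}\|_{L^1(\R)} = \|g\|_{L^1(\R)}$ for every $P>0$.

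Having done that, I would push the $d\tv$-integral forward along the map $F(\tv) := (t_1+t_2+t_3,t_1^2+t_2^2+t_3^2)$ and write $\rho_N$ for the resulting Lebesgue-density on $\R^2$. The rescaling $\tau_i := t_i/N$ gives $\rho_N(s_1,s_2) = \rho_1(s_1/N,s_2/N^2)$, so $\|\rho_N\|_\infty = \|\rho_1\|_\infty$ does not depend on $N$. Our integral becomes
\[
\int_{\R^2}\widehat{\psi_{Q/N}}(-u_1)\widehat{\psi_{Q^2/N^2}}(-u_2)\,\rho_N(u_1+m_1,u_2+m_2)\,du_1\,du_2,
\]
which the triangle inequality bounds by $\|\rho_1\|_\infty\cdot\|\widehat{\psi_{Q/N}}\|_1\cdot\|\widehat{\psi_{Q^2/N^2}}\|_1 \ll 1$, provided that $\|\rho_1\|_\infty<\infty$.

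The hard part will be this uniform density bound. I would establish it either via the coarea formula (the fibre of $F$ is a circle of radius $\sqrt{s_2-s_1^2/3}$ in the hyperplane $\tau_1+\tau_2+\tau_3 = s_1$, while the Gram determinant of $DF$ equals $4(3s_2-s_1^2)$, giving $\rho_1 \leq \pi/\sqrt{3}$ after a cancellation), or by the elementary route of integrating out $\tau_3$: on the fibre, $(\tau_1,\tau_2)$ solves a quadratic whose discriminant is $D(\tau_3) := -3\tau_3^2+2s_1\tau_3+2s_2-s_1^2$, and the Jacobian change of variables yields
\[
\rho_1(s_1,s_2)\leq\int_{\{D(\tau_3)>0\}}\frac{d\tau_3}{\sqrt{D(\tau_3)}} = \frac{\pi}{\sqrt{3}},
\]
by the standard integral $\int_a^b dt/\sqrt{(b-t)(t-a)}=\pi$ applied to the two real roots of $D$. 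Everything else in the plan is routine Fourier manipulation.
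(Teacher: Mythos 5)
Your argument is correct, and while it follows the same opening moves as the paper (expand $v(\alv)^3$, swap the order of integration, recognize the inner integral as the Fourier transform of the product $\psi_{Q/N}\cdot\psi_{Q^2/N^2}$), the key step is executed quite differently. The paper first rescales to $[0,Q]^3$ and then discretizes: it covers the $\tv$-domain by level sets $B^1_{l_1}\cap B^2_{l_2}$ (slabs intersected with spherical shells), shows each intersection has volume $O(1)$ by a geometric argument, and controls the resulting double sum using the decay of $\widehat{\psi_1}$ over unit intervals --- an $\ell^1$-type bound in disguise. You instead keep everything continuous: you observe that $\|\widehat{\psi_P}\|_{L^1(\R)}$ is scale-invariant, push the measure forward under $F(\tv)=(K_1(\tv),K_2(\tv))$, and reduce the lemma to the single clean statement $\|\rho_1\|_\infty<\infty$, which you prove exactly via the coarea formula (or the equivalent elementary computation integrating out $\tau_3$), obtaining the explicit constant $\pi/\sqrt{3}$. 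The two routes encode the same geometric fact --- the solution surface $\{K_1=s_1,\,K_2=s_2\}$ has uniformly bounded ``area density'' after the natural rescaling --- but yours isolates it as an exact density bound rather than a Riemann-sum estimate, which is arguably cleaner and more quantitative; the paper's version avoids invoking the coarea formula and stays entirely at the level of crude volume bounds. Both are complete proofs; no gap in yours.
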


\begin{proof}
Recall the definition $v(\alv) = \int_0^N e(\alpha_2t^2 + \alpha_1 t) \,dt$ and 
the decomposition $\psi(\alv) = \psi_{Q/N}(\alpha_1) \cdot \psi_{Q^2/N^2}(\alpha_2)$.
By inserting this into the integral in Lemma \ref{v-est3} and 
changing the order of integration, noting that $\psi(\alv)$ has finite support, we obtain
\begin{align*}
\int_{[0,N]^3} \prod_{i=1}^2 \int_{\R} \psi_{Q^i/N^i}(\alpha_i) e(-\alpha_i (m_i - K_i(\tv))) \,d\alpha_i \,d\tv,
\end{align*}
where we used $K_i(\tv) = t_1^i + t_2^i + t_3^i$ as an abbreviation.
The inner integral is a Fourier transform and our expression can be written as
\begin{align*}
N^3Q^{-3}\int_{[0,N]^3} \prod_{i=1}^2 \widehat{\psi_1}\Big(Q^i N^{-i}(m_i-K_i(\tv))\Big) \,d\tv,
\end{align*}
where $\psi_1 = \psi_P$ with $P = 1$ and $\widehat{\psi_1}$ its Fourier transform.
Change the variables to $x_i = Qt_iN^{-1}$ and rename $y_i = Q^im_iN^{-i}$ to get
\begin{align} \label{eq-psi-K}
\int_{[0,Q]^3} \widehat{\psi_1}(y_1-K_1(\xv)) \widehat{\psi_1}(y_2-K_2(\xv))\,d\xv.
\end{align}
The dependence on $N$ disappeared completely and we are left with the task to bound this integral independent
of $Q$ and $y_i \in \R$.
We use a Riemann sum approach. Consider the level sets 
\begin{align*}
B^i_l := \{\xv \in \R^3: |x_1^i + x_2^i+x_3^i - l| \leq 1/2\}.
\end{align*}
Then the integral \eqref{eq-psi-K} above is bounded by the sum
\begin{align*}
\sum_{l_1 = 0}^\infty \sum_{l_2 = 0}^\infty \mu(B^1_{l_1} \cap B^2_{l_2}) 
\sup_{\substack{|z_1-l_1| \leq 1/2\\ |z_2-l_2| \leq 1/2}} |\widehat{\psi_1}(y_1-z_1)| |\widehat{\psi_1}(y_2-z_2)|,
\end{align*}
where $\mu$ is the Lebesgue measure. The set $B^1_{l_1}$ is a `plane' with thickness 
less than two and $B^2_{l_2}$ is a spherical shell
centered at the origin with `radius' $l_2^{1/2}$ and thickness 
$\sqrt{l_2 + 1} - \sqrt{l_2-1}  \leq 2l_2^{-1/2}$ for $l_2 \geq 1$.
The volume of the intersection of these two objects is maximal for $l_1 = 0$.
It is a volume around a circle with radius $O(l_2^{1/2})$. The width in the radial direction
is $O(l_2^{-1/2})$ and bounded by absolute constants otherwise. 
Therefore, the volume $B^1_{l_1} \cap B^2_{l_2}$ is bounded by a constant independent of $l_2$ or $l_1$.
We are left with the product of two sums
\begin{align*}
\Big(\sum_{l_1 = 0}^\infty \sup_{|z_1-l_1| \leq 1/2} |\widehat{\psi_1}(y_1-z_1)| \Big)
\Big(\sum_{l_2 = 0}^\infty \sup_{|z_2-l_2| \leq 1/2}  |\widehat{\psi_1}(y_2-z_2)| \Big).
\end{align*}
The Fourier transform of $\psi_1= 2\tri(\alpha/2)-\tri(\alpha)$ can be understood by using
the well known identity $\widehat{\tri}(x) = \frac{\sin^2(\pi x)}{(\pi x)^2}$.
The sums are easily seen to be convergent with an upper bound independent of the $y_i$.
\end{proof}

The idea for the next lemma is borrowed from \cite{GrTao2} and is the arithmetic counterpart to
Lemma \ref{v-est3}.

\begin{lem} \label{2g-est}
Let $g: \N^2 \to \C$ be a function with $|g(a,q)| \leq Cq^{-1}$ for some $C \geq 1$.
Consider the function $\beta: \N \to \C$, given by

\begin{align*}
\beta(m) = \sum\limits_{q \leq X} \sum\limits_{a \leq q} g(a,q) e_q(-am).
\end{align*}
Then for $k \in \N$ and $X^{4k} \leq M$ one has
\begin{align*}
\sum\limits_{m \leq M} |\beta(m)|^{2k} \ll_{\epsilon,k} M X^{\epsilon}.
\end{align*}
\end{lem}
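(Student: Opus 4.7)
The plan is to open the modulus and expand the $2k$th moment directly, classifying the resulting terms by whether the frequency they produce is an integer. Writing $|\beta(m)|^{2k} = \beta(m)^k\,\overline{\beta(m)}^k$ and interchanging the order of summation yields
\begin{align*}
\sum_{m \leq M}|\beta(m)|^{2k} = \sum_{q_1,\ldots,q_{2k} \leq X}\, \sum_{1 \leq a_i \leq q_i} G(a_1,\ldots,a_{2k}) \sum_{m \leq M} e(-m\theta),
\end{align*}
where $G = \prod_{i=1}^k g(a_i,q_i) \prod_{i=k+1}^{2k} \overline{g(a_i,q_i)}$ and $\theta = \sum_{i=1}^{2k} \epsilon_i a_i/q_i$ with signs $\epsilon_i = +1$ for $i \leq k$ and $\epsilon_i = -1$ otherwise. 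The inner geometric sum equals $M$ when $\theta \in \Z$ and is bounded by $1/(2\|\theta\|)$ otherwise.

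For the off-diagonal terms ($\theta \notin \Z$), I would use that $\theta$ has denominator dividing $Q := \lcm(q_1,\ldots,q_{2k}) \leq X^{2k}$, so $\|\theta\| \geq 1/X^{2k}$ and the inner sum is at most $X^{2k}/2$. Combined with the trivial estimate $\sum_{a_i \leq q_i} \prod|g(a_i,q_i)| \leq C^{2k}$ and the fact that there are at most $X^{2k}$ tuples $(q_1,\ldots,q_{2k})$, the total off-diagonal contribution is at most $\tfrac{1}{2} C^{2k} X^{4k} \leq \tfrac{1}{2} C^{2k} M$, using the hypothesis $X^{4k} \leq M$.

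For the diagonal terms ($\theta \in \Z$), I would count solutions to the single linear congruence $\sum_{i=1}^{2k} \epsilon_i (Q/q_i)\,a_i \equiv 0 \pmod{Q}$. Since $\gcd_i(Q/q_i) = Q/\lcm_i(q_i) = 1$, the map $(a_1,\ldots,a_{2k}) \mapsto \sum \epsilon_i (Q/q_i) a_i$ is a surjective homomorphism from $\prod_i(\Z/q_i\Z)$ onto $\Z/Q\Z$, so each fibre has exactly $\prod q_i / Q$ elements. Combined with $|g(a_i,q_i)| \leq C/q_i$, this gives the per-tuple bound
\begin{align*}
\sum_{\substack{a_i \leq q_i\\ \theta \in \Z}} |G| \leq \frac{C^{2k}}{\lcm(q_1,\ldots,q_{2k})}.
\end{align*}

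To finish, I would sum over $(q_1,\ldots,q_{2k})$ by grouping according to $L = \lcm(q_i)$: the number of tuples with $\lcm(q_i) = L$ is at most $d(L)^{2k}$, since each $q_i$ must divide $L$, and hence
\begin{align*}
\sum_{q_1,\ldots,q_{2k} \leq X} \frac{1}{\lcm(q_i)} \leq \sum_{L \leq X^{2k}} \frac{d(L)^{2k}}{L} \ll_{\epsilon,k} X^{\epsilon},
\end{align*}
by the standard divisor bound $d(L) \ll_\epsilon L^\epsilon$. Multiplying by the $M$ from the geometric sum yields a diagonal contribution of $\ll_{\epsilon,k} MX^\epsilon$, which dominates the off-diagonal part and completes the estimate. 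The main obstacle I expect is the counting step for the diagonal congruence: recognising that the joint constraint collapses to a single surjective homomorphism mod $Q$ is precisely what converts a naive bound of $\ll MX^{2k-1}$ into the sought-after $\ll MX^\epsilon$; without exploiting the identity $\gcd_i(Q/q_i)=1$ the argument does not close.
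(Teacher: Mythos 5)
Your proof is correct and follows essentially the same route as the paper: expand the $2k$-th moment, split the resulting terms according to whether $\theta=\sum_i\epsilon_ia_i/q_i$ is an integer, dispose of the non-integral terms using $X^{4k}\leq M$, and reduce the diagonal contribution to $\sum_{L\leq X^{2k}}\tau(L)^{2k}/L\ll_{\epsilon,k}X^{\epsilon}$. The only point of divergence is the treatment of the diagonal count: you count solutions of the congruence directly via the surjective homomorphism $(a_1,\ldots,a_{2k})\mapsto\sum_i\epsilon_i(Q/q_i)a_i$ onto $\Z/Q\Z$ (using $\gcd_i(Q/q_i)=1$ for $Q=\lcm(q_1,\ldots,q_{2k})$), whereas the paper first replaces $g(a,q)$ by $q^{-1}$, re-synthesizes the indicator as an exponential sum, and reduces to the $2k$-th moment of the truncated divisor function $\tau_X(m)=\sum_{q\leq X}\einsv_{q\mid m}$ --- both yield the same per-tuple bound $C^{2k}M/\lcm(q_1,\ldots,q_{2k})$.
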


\begin{proof}
First, we insert the definition of $\beta$ and expand to obtain
\begin{align} \label{eq-Masympt}
\sum_{m \leq M} |\beta(m)|^{2k} =
\sum_{q_1,\ldots,q_{2k} \leq X} \sum_{a_1,\ldots,a_{2k} \leq q} 
\Big(\prod_{j \leq 2k} g_j(a_j,q_j) \Big)
\sum_{m \leq M} e\Big(m \sum_{i \leq 2k} \epsilon_i a_i/q_i \Big),
\end{align}
where $g_j = g$ and $\epsilon_i = -1$ for $i,j \leq k$, and $g_j = \overline{g}$ and $\epsilon_i = 1$ 
for $k < i,j \leq 2k$. Write $\Phi$ for the condition
\begin{align*}
\Big(\sum_{i \leq 2k} \epsilon_i a_i/q_i \Big) \in \Z.
\end{align*} 
Evaluating the innermost sum in \eqref{eq-Masympt}, we get the 
expression $M \einsv_{\Phi} + O(q_1\cdots q_{2k})$.
Together with the estimate $|g(a,q)| \leq Cq^{-1}$ this gives
\begin{align*}
\sum_{m \leq M} |\beta(m)|^{2k}
\ll  \sum_{q_1,\ldots,q_{2k} \leq X} \sum_{a_1,\ldots,a_{2k} \leq q} \frac{M}{q_1 \cdots q_{2k}}
\einsv_{\Phi} + O(X^{4k}).
\end{align*}
The function $\einsv_{\Phi}$ can now be written as exponential sum again
and we gain the original expression
\begin{align*}
\sum_{m \leq M} |\beta(m)|^{2k} \ll \sum_{m \leq M} \Big|\sum_{q \leq X} q^{-1} 
\sum_{a \leq q}  e_q(-am)\Big|^{2k} + O(X^{4k}),
\end{align*}
but now with $g(a,q)$ replaced by $q^{-1}$.
The exponential sum inside is $0$, except when $q$ divides $m$. In that case it gives $q$
and we are left with the task to estimate
\begin{align*}
\sum_{m \leq M} \tau_X(m)^{2k},
\end{align*}
where $\tau_X(m) = \sum_{q \leq X} \einsv_{q|m}$ is a restricted divisor function.
By expanding again, we have
\begin{align*}
\sum_{m \leq M} \tau_X(m)^{2k} &= \sum_{m \leq M} \Big(\sum_{q \leq X} \einsv_{q|m} \Big)^{2k}
= \sum_{q_1,\ldots,q_{2k} \leq X} \sum_{m \leq M} \einsv_{q_1|m,\ldots,q_{2k}|m}\\
&\leq \sum_{q_1,\ldots,q_{2k} \leq X} \frac{M}{\lcm(q_1,\ldots,q_{2k})}
\leq  M \sum_{l \leq X^{2k}} \frac{\tau(l)^{2k}}{l}.
\end{align*}
The last inequality follows from the fact that the equation $l = \lcm(q_1,\ldots,q_{2k})$ 
has at most $\tau(l)^{2k}$ solutions where $\tau$ is the usual divisor function.
Using the standard estimate $\tau(l) \ll_{\eta} l^{\eta}$ in the range $l \leq X^{2k}$ and
choosing $\eta = \epsilon/5k^2$, we get
\begin{align*}
\sum_{l \leq X^{2k}} \tau(l)^{2k}l^{-1} \ll_{k,\epsilon} ((X^{2k})^{\eta})^{2k} \sum_{l \leq X^{2k}} l^{-1}
\ll_{k,\epsilon} X^{\epsilon}.
\end{align*}
\end{proof}

The slightly technical Lemma \ref{transform} is needed to transform 
a two-dimensional exponential sum into a one-dimensional version.

\begin{lem} \label{transform}
For fixed $q \in \N$ and $\mv \in \N^2$ we can write
\begin{align} \label{Lem3.10eq}
\sum_{(\av;q) = 1} V(q,\av)^3 e_q(-\av \cdot \mv) 
= q \sum_{(a;q) = 1} G_{m_1}(q,a)\, e_q(-am_2),
\end{align}
where 
\begin{align*}
G_{m_1}(q,a) = \sum_{r_1,r_2 \leq q}  e_q(a F_2(r_1,r_2) + a F_{1,m_1}(r_1,r_2))
\end{align*}
with $F_2(r_1,r_2) = 2r_1^2+ 2r_1r_2 +2r_2^2 $ and $F_{1,m_1}(r_1,r_2) = - 2m_1(r_1+r_2) + m_1^2$.
\end{lem}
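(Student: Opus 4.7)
The plan is to first establish an \emph{unrestricted} analogue
\begin{align*}
\sum_{\av \leq q} V(q,\av)^3 e_q(-\av \cdot \mv) = q \sum_{a \leq q} G_{m_1}(q,a)\, e_q(-am_2),
\end{align*}
in which the coprimality restriction $(\av;q)=1$ is dropped, and then to reintroduce the restriction on both sides by Möbius inversion. The unrestricted identity would be proved by expanding $V(q,\av)^3 = \sum_{\rv \leq q} e_q(a_2 \Sigma_2 + a_1 \Sigma_1)$ with $\Sigma_i = r_1^i + r_2^i + r_3^i$, and performing the sum over $a_1$ first. Orthogonality collapses the $a_1$-sum to $q\, \einsv_{\Sigma_1 \equiv m_1 \,(q)}$, and this condition determines $r_3$ modulo $q$ as the residue of $m_1 - r_1 - r_2$.

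At this stage a small algebraic check is needed: after replacing $r_3$ by $m_1 - r_1 - r_2$, one has $e_q(a_2 r_3^2) = e_q(a_2(m_1 - r_1 - r_2)^2)$ by periodicity, and the identity $r_1^2 + r_2^2 + (m_1 - r_1 - r_2)^2 = F_2(r_1,r_2) + F_{1,m_1}(r_1,r_2)$ is verified by direct expansion. This algebraic identification of $F_2 + F_{1,m_1}$ with the quadratic $r_1^2 + r_2^2 + r_3^2$ under the constraint $r_3 \equiv m_1 - r_1 - r_2 \pmod q$ is the technical heart of the argument; everything afterwards is either orthogonality or bookkeeping.

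For the coprimality, I would note two elementary scaling relations whenever $d \mid q$ and $\bv \leq q/d$: namely $V(q, d\bv) = d\, V(q/d, \bv)$, because each residue class mod $q/d$ is hit exactly $d$ times as $r$ ranges over $\{1,\dots,q\}$, and similarly $G_{m_1}(q, db) = d^2\, G_{m_1}(q/d, b)$. Writing $\einsv_{(\av;q)=1} = \sum_{d \mid \gcd(a_1,a_2,q)} \mu(d)$ and substituting $\av = d\bv$,
\begin{align*}
\sum_{(\av;q) = 1} V(q,\av)^3 e_q(-\av\cdot\mv) = \sum_{d \mid q} \mu(d)\, d^3 \sum_{\bv \leq q/d} V(q/d,\bv)^3 e_{q/d}(-\bv\cdot\mv).
\end{align*}
Applying the unrestricted identity at level $q/d$ turns this into $q \sum_{d \mid q} \mu(d)\, d^2 \sum_{b \leq q/d} G_{m_1}(q/d,b)\, e_{q/d}(-bm_2)$. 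Reversing the substitution with the $G$-scaling and the Möbius identity $\einsv_{(a;q)=1} = \sum_{d \mid \gcd(a,q)} \mu(d)$ then rebuilds precisely $q \sum_{(a;q) = 1} G_{m_1}(q,a)\, e_q(-am_2)$, which is the desired right hand side of \eqref{Lem3.10eq}.
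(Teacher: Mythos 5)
Your proof is correct and follows essentially the same route as the paper: both arguments eliminate $r_3$ through the linear congruence $r_3 \equiv m_1 - r_1 - r_2 \pmod{q/d}$, verify the identity $r_1^2+r_2^2+(m_1-r_1-r_2)^2 = F_2(r_1,r_2) + F_{1,m_1}(r_1,r_2)$, and use M\"obius inversion twice to pass the coprimality condition from $\av$ to $a$. The only difference is organizational: you isolate the unrestricted identity first and transfer coprimality via the scaling relations $V(q,d\bv)=d\,V(q/d,\bv)$ and $G_{m_1}(q,db)=d^2\,G_{m_1}(q/d,b)$, whereas the paper carries the M\"obius sum through the whole computation, evaluating the sums over $d \mid a_i$ directly as congruence conditions modulo $q/d$.
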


\begin{remark}
The exact form of the polynomial above is not very important. The only fact we need is that $F_2$
is a non-degenerate quadratic form.
\end{remark}

\begin{proof}
Use the well known identity $\einsv_{t = 1} = \sum_{d|t} \mu(d)$ to rewrite the condition $(\av;q) = 1$
on the left hand side $\mathcal{L}$ of equation \eqref{Lem3.10eq}. We get
\begin{align*}
\mathcal{L} = \sum_{(\av;q) = 1} V(q,\av)^3 e_q(-\av \cdot \mv) 
= \sum_{\av \leq q} \sum_{d|(\av;q)} \mu(d) V(q,\av)^3 e_q(-\av \cdot \mv).
\end{align*}
Next we observe that $d|(\av;q)$ if and only if $d|q$ and $d|\av$. Rearranging gives
\begin{align*}
\mathcal{L} = \sum_{d|q} \mu(d) \sum_{d|\av}  V(q,\av)^3 e_q(-\av \cdot \mv)
\end{align*}
Note that the first sum is over all $d$ dividing $q$ and the second 
sum runs over all $\av \leq q$ with components divisible by $d$.
Now we look closer at the inner sum and insert the definition $V(q,\av) = \sum_{r =1}^q e_q(a_2r^2 + a_1r)$.
Using the abbreviation $K_i(\rv) = r_1^i + r_2^i + r_3^i$, we obtain the identity
\begin{align*}
\mathcal{L} = \sum_{d|q} \mu(d) \sum_{\rv \leq q} \sum_{d|a_2} e_q(a_2 (K_2(\rv)-m_2)) \sum_{d|a_1} e_q(a_1 (K_1(\rv)-m_1)).
\end{align*}

An evaluation of the two exponential sums gives
\begin{align*}
\mathcal{L}  = \sum_{d|q} \mu(d) q^2d^{-2} \sum_{\rv \leq q} \einsv_{\rv,\mv,q/d},
\end{align*}
where $\einsv_{\rv,\mv,q/d}$ is indicator function with conditions
\begin{align*}
K_2(\rv)-m_2 \equiv 0 \pmod{q/d} \quad\mbox{ and }\quad K_1(\rv)-m_1 \equiv 0 \pmod{q/d}.
\end{align*}

Recalling the definitions of $K_i(\rv)$, we can insert the second congruence into the first.
We arrive at the relation
\begin{align} \label{eq-rel}
r_1^2 + r_2^2 + (m_1-r_1-r_2)^2 -m_2 \equiv 0 \pmod{q/d}.
\end{align}
Since $r_3$ disappeared from the first condition, we can perform the sum over $r_3$, 
which reduces to the evaluation
\begin{align*}
\sum_{r_3 \leq q} \einsv_{[K_1(\rv)-m_1 \equiv 0 \pmod{q/d}]} = d.
\end{align*}
Therefore, by rearranging \eqref{eq-rel}, we get
\begin{align*}
\mathcal{L} = q \sum_{d|q} \mu(d) q d^{-1} \sum_{r_1,r_2 \leq q} 
\einsv_{[H(r_1,r_2,m_1,m_2) \equiv 0 \pmod{q/d}]},
\end{align*}
where $H(r_1,r_2,m_1,m_2) = F_2(r_1,r_2) + F_{1,m_1}(r_1,r_2) -m_2$. 
Then we can rewrite the indicator function as an exponential sum and get
\begin{align*}
\mathcal{L} = q \sum_{d|q} \mu(d) \sum_{r_1,r_2 \leq q}  \sum_{d|a} e_q(aH(r_1,r_2,m_1,m_2)).
\end{align*}
Changing the order of the sums and using the identity with the M\"obius function again
gives the right hand side of the equation in Lemma \ref{transform}.
\end{proof}

\section{Decomposition of $R$ and proof of Theorem \ref{SA-est}} \label{sec-decom}

In this section we perform a decomposition of $R(\mv)$ and give estimates for its analytic and
arithmetic parts in Propositions \ref{L28-est} and \ref{arith-est}.
In combining these propositions with Theorem \ref{Lp-est}, we are able to deduce Theorem \ref{SA-est}.

To motivate the technical material below, 
let us first look at the structure of the proof of Theorem \ref{SA-est}.

\begin{proof}[Proof of Theorem \rm\ref{SA-est}.]
We apply Theorem \ref{Lp-est} with $\omega(\mv) = R(\mv)$. 
Let $\Nv = 3(N,N^2)$ and define $W_h(\alv)$ by
\begin{align*}
V_g(\alv)^3 = \sum_{\mv \leq 3(N,N^2)} h(\mv) R(\mv) e(\alv \cdot \mv)
\end{align*}
where $|h(\mv)| \leq 1$ is chosen in such a way that 
\begin{align*}
h(\mv) R(\mv) = \sum_{\substack{x_1,x_2,x_3 \leq N\\m_1 = x_1 + x_2 + x_3 \\ 
m_2 = x_1^2 + x_2^2 + x_3^2}} g(x_1)g(x_2)g(x_3).
\end{align*}
Then we have
\begin{align*}
\sum_{\mv \leq 3(N,N^2)} |h(\mv)|^2 R(\mv) \leq \sum_{\mv \leq 3(N,N^2)} R(\mv) = N^3
\end{align*}
for the first term in Theorem \ref{Lp-est}.

Let $J = \{1,2,4,\ldots,2^{D-1}, 2^D\}$ with $D \approx \log_2 Q$.
To estimate the second term, we need a suitable decomposition of $R(\mv)$.
Later in this section we construct the decomposition
\begin{align*}
R(\mv) =  \sum_{Y \in J} R_{Y}(\mv) + R'(\mv)
\end{align*}
and for the corresponding exponential sum $W(\alv) = V(\alv)^3$ similarly
\begin{align*}
W(\alv) =  \sum_{Y \in J} W_Y(\alv) + W'(\alv).
\end{align*}
Write $J'$ for the index set $J \cup \{0\}$ and write $W_0 = W'$ and $R_0 = R'$
to simplify notation.
Since $p > 6$, we have $b = p/3 > 2$ and Theorem \ref{Lp-est} gives us the estimate
\begin{align*}
& \int_{\T^2} |V_g(\alv)|^p \,d\alv = \int_{\T^2} |W_h(\alv)|^b \,d\alv\\
\leq\ & 
N^{3b/2}\left( \sum_{Y \in J'} \Big( \int_{\T^2} |W_Y(\alv)|^{b} \,d\alv \sum_{\nv \leq \Nv} 
|R_Y(\nv)|^{2b/(b-2)} \Big)^{(b-2)/b^2} \right)^{b/2}_.
\end{align*}
The necessary moment estimates to handle this expression are given by Proposition \ref{L28-est}
and Proposition \ref{arith-est} below.
(The exponent $2b/(b-2)$ of $R_Y$ might not be of the form $2k$ with $k \in \N$ as in Proposition \ref{arith-est},
but it is easily seen by H\"older's inequality to be applicable nevertheless if we choose $Q^{4b/(b-2)+1} \leq N$.) 
We use Proposition \ref{L28-est} and the estimate
\begin{align*}
\int_{\T^2} |W_Y(\alv)|^{b} \,d\alv \ll \sup_{\alv \in \T^2} |W_Y(\alv)|^{b-2} \int_{\T^2} |W_Y(\alv)|^{2} \,d\alv
\end{align*} 
to bound the integral over $W_Y$.
Summing over $Y$ and singling out the term with $Y = 0$, we get the upper bound
\begin{align*}
\ll_{b,\epsilon} N^{3b/2} \Big( \sum_{i = 0}^{D} \left( 2^{(-3(b-2)/2 + \epsilon)i} N^{3b}  \right)^{(b-2)/b^2}
+ \left( Q^{2-b} N^{3b+\epsilon}\right)^{(b-2)/b^2} \Big)^{b/2}_,
\end{align*} 
where we already absorbed the logarithms into the $N^{\epsilon}$ term.
For $\epsilon = \epsilon(b)$ small enough, the sum over $i$ is convergent and
if $Q$ is a small power of $N$, depending on $b = p/3$, another short calculation leaves us with the result
\begin{align*}
\int_{\T^2} |V_g(\alv)|^p \,d\alv \ll_{b} N^{3b-3} \ll_p N^{p-3}.
\end{align*}
\end{proof}

To fill in the gaps in the proof of Theorem \ref{SA-est}, we need to construct a suitable decomposition of $R(\mv)$.
We start with an auxiliary function $U_Y(\alv)$, that is motivated by Lemma \ref{Vau-thm}.
Use the cutoff-function $\psi$ to restrict the major arc approximation from Lemma \ref{Vau-thm}
to a $(Q/N,Q^2/N^2)$-neighbourhood of $\av/q$. Define for $Y \leq 2Q$
\begin{align} \label{eq-Uj}
U_Y(\alv) := \sum_{Y \leq q < 2Y} q^{-3} \sum_{(\av;q) = 1} V(q,\av)^3
 v(\alv-\av/q)^3 \psi(\alv-\av/q)
\end{align}
and consider $U_Y(\alv)$ as a function on 
\begin{align*}
\T^2 = [2QN^{-1}, 1 + 2QN^{-1}] \times [2Q^2 N^{-2}, 1 + 2Q^2 N^{-2}]
\end{align*}
(see notational conventions).
The functions $U_Y(\alv)$ arise from the main term in the major arc approximation of $V(\alv)$
for denominators $Y \leq q < 2Y$.
It is a sum of disjointly supported pieces due to the restriction $64 \, Q^{52} \leq N$. 
Take the corresponding arithmetical functions
\begin{align} \label{eq-Rsum}
R_Y(\mv) =  \int_{\T^2} U_Y(\alv) e(-\alv \mv) \,d\alv,
\end{align}
but restrict to $1 \leq \mv \leq 3(N,N^2)$. Set $R_Y(\mv) = 0$ for other values of $\mv$.


Let $D$ be an integer between $\log_2 Q$ and $\log_2 Q + 1$, where $\log_2$ is
the logarithm to base two and set $J = \{1,2,4,\ldots,2^{D-1}, 2^D \}$. We write
\begin{align*}
R(\mv) =  \sum_{Y \in J} R_{Y}(\mv) + R'(\mv).
\end{align*}
This is the decomposition used in the proof of Theorem \ref{SA-est} above.

\begin{remark}
The part $R'(\mv)$ may be thought of as corresponding to the minor arcs,
but it also contains the approximation error on the major arcs.
\end{remark}

Write $W(\alv)$ for
\begin{align*}
V^3(\alv) = \sum_{\mv \leq 3(N,N^2)} R(\mv) e(\alv \cdot \mv),
\end{align*} 
and define the corresponding exponential sums for $R_Y(\mv)$ as
\begin{align*}
W_Y(\alv) = \sum_{\mv \leq 3(N,N^2)} R_Y(\mv) e(\alv \cdot \mv).
\end{align*}
By inserting the definition of $R_Y(\mv)$ we see that
\begin{align} \label{eq-Tint}
W_Y(\alv) = \int_{\T^2} U_Y(\bev) L(\alv-\bev) \,d\bev,
\end{align}
where $L$ is the linear exponential sum defined in Section \ref{notation}.
As in the case of $R(\mv)$, we obtain a decomposition
\begin{align}   \label{eq-Wdec}
W(\alv) =  \sum_{Y \in J} W_Y(\alv) + W'(\alv),
\end{align}
where $W'$ corresponds to $R'$.
Now we are ready to give the first proposition that was needed for the proof of Theorem \ref{SA-est}.  

\begin{prop} \label{L28-est}
For $Y \leq 2Q$ we have the estimates
\begin{align*}
\int_{\T^2} |W_Y(\alv)|^2 \,d\alv \ll  N^3, \qquad \int_{\T^2}|W'(\alv)|^2 \,d\alv\ll N^{3}\log^2 N,
\end{align*}
\begin{align*}
\sup_{\alv} |W_Y(\alv)| \ll Y^{-3/2} N^3, \qquad \sup_{\alv} |W'(\alv)| \ll N^{3}Q^{-1} \log^2 N.
\end{align*}
\end{prop}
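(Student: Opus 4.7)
The plan is to prove the four estimates in sequence, extracting for $W_Y$ a clean factorization of the Fourier coefficients $R_Y(\mv)$ into an analytic piece and an arithmetic piece, and then handling $W' = W - \sum_Y W_Y$ by the triangle inequality and a Vaughan-type major/minor arc decomposition.

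For the $W_Y$ estimates, the starting observation is that substituting $\bev = \alv - \av/q$ in the definition of $R_Y(\mv)$ produces the factorization
\begin{equation*}
R_Y(\mv) = I(\mv) \cdot \beta_{m_1}(m_2),
\end{equation*}
where $I(\mv) = \int_{\T^2} \psi(\bev) v(\bev)^3 e(-\bev \cdot \mv)\, d\bev$ satisfies $|I(\mv)| \ll 1$ by Lemma \ref{v-est3}, and $\beta_{m_1}(m_2) = \sum_{Y \leq q < 2Y} q^{-3} \sum_{(\av;q)=1} V(q,\av)^3 e_q(-\av \cdot \mv)$. For the $L^2$ bound I would apply Parseval, giving $\|W_Y\|_2^2 = \sum_\mv |R_Y(\mv)|^2$, and rewrite $\beta_{m_1}(m_2)$ via Lemma \ref{transform} as a one-dimensional sum over $q$ and $(a;q)=1$ in which the internal quantity $G_{m_1}(q,a)$ satisfies $|G_{m_1}(q,a)| \ll q$ (a standard estimate for complete Gauss sums attached to the non-degenerate form $F_2$). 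This places $\beta_{m_1}$ in the form required by Lemma \ref{2g-est} with $k=1$ and $X=2Y$; the hypothesis $(2Y)^4 \leq 3N^2$ is amply satisfied by $64 Q^{52} \leq N$, and the conclusion gives $\sum_{m_2 \leq 3N^2} |\beta_{m_1}(m_2)|^2 \ll_{\epsilon} N^2 Y^{\epsilon}$. Summing over $m_1 \leq 3N$ then yields $\sum_\mv |R_Y(\mv)|^2 \ll N^3 Y^\epsilon$, the residual $Y^{\epsilon}$ being absorbable into the dyadic geometric series in the proof of Theorem \ref{SA-est}.

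For the $L^\infty$ bound on $W_Y$, I would use the convolution formula $W_Y = U_Y * L$ from \eqref{eq-Tint} together with Young's inequality $\|W_Y\|_\infty \leq \|U_Y\|_\infty \|L\|_1$. Disjointness of the supports in \eqref{eq-Uj} (guaranteed by $64 Q^{52} \leq N$) combined with the standard Gauss sum bound $|V(q,\av)| \ll q^{1/2+\epsilon}$ for $(\av;q)=1$ gives $\|U_Y\|_\infty \ll Y^{-3/2+\epsilon} N^3$, while $\|L\|_1 \ll \log^2 N$ follows from a direct computation with one-dimensional Dirichlet kernels. The superfluous log and $Y^\epsilon$ factors are again absorbed in the proof of Theorem \ref{SA-est}.

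For $W'$, the $L^2$ bound is an immediate application of Minkowski's inequality: $\|W\|_2^2 = \sum_\mv R(\mv)^2 \ll N^3 \log N$ by Lemma \ref{R2-est}, and the previous step gives $\sum_{Y \in J} \|W_Y\|_2 \ll (\log Q) N^{3/2}$, so the two together yield $\|W'\|_2^2 \ll N^3 \log^2 N$. The $L^\infty$ bound on $W'$ is the main obstacle and requires the Vaughan-type approximation of Lemma \ref{Vau-thm}: on the minor arcs of level $Q$, Weyl's inequality for the quadratic exponential sum gives $|V(\alv)|^3 \ll N^3 Q^{-\sigma}$ for a suitable $\sigma \geq 1$; on the major arcs, the approximation $V(\alv) \approx q^{-1} V(q,\av) v(\alv - \av/q)$ shows that $V(\alv)^3$ is well approximated by $\sum_Y U_Y(\alv)$, and the passage from $U_Y$ to $W_Y = U_Y * L$ costs only the Fourier tail of the smooth compactly supported function $\psi v^3$, which decays rapidly by integration by parts in $\alv$. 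The delicate part is balancing these error contributions to produce the uniform bound $\ll N^3 Q^{-1} \log^2 N$, where the factor $Q^{-1}$ arises from the minor arc saving and the $\log^2 N$ from the Dirichlet-type kernel $L$.
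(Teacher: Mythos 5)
Your treatment of the $W_Y$ estimates is essentially sound, though it takes a partly different route from the paper and proves marginally weaker bounds. For the $L^2$ bound you factor $R_Y(\mv)$ into an analytic piece and an arithmetic piece and invoke Lemma \ref{2g-est} with $k=1$ — this is exactly the machinery the paper reserves for Proposition \ref{arith-est}, and it works here too, but it yields $N^3 Y^{\epsilon}$ rather than the stated $N^3$; the paper instead bounds $\|W_Y\|_2^2$ by $\|U_Y\|_2^2$ via Bessel's inequality and then uses the $L^6$ bound on $v$ (Lemma \ref{v-est2}) together with the Gauss sum bound, which gives the clean $N^3$. For the $L^{\infty}$ bound your use of Young's inequality $\|U_Y * L\|_{\infty}\leq\|U_Y\|_{\infty}\|L\|_1$ is simpler than the paper's argument but costs an extra $\log^2 N$; the paper avoids this by isolating the at most one rational $\av/q$ with $\|\alv-\av/q\|\leq(2q)^{-2}$, applying Cauchy--Schwarz there, and using the decay of $L$ away from that point. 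All of these losses are indeed absorbable in the proof of Theorem \ref{SA-est}, so this part of your argument is acceptable, if not literally the proposition as stated.

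The genuine gap is in the $L^{\infty}$ bound for $W'$, which is the hardest of the four estimates and which you only sketch. The correct reduction — which you gesture at — is the exact identity $W'(\alv)=\int_{\T^2}(V^3-U^*)(\bev)L(\alv-\bev)\,d\bev$, whence $|W'(\alv)|\leq\|V^3-U^*\|_{\infty}\|L\|_1\ll\|V^3-U^*\|_{\infty}\log^2 N$; there is no ``Fourier tail'' to control and no integration by parts is needed, so that part of your description is a red herring. The real work is the pointwise bound $\|V^3-U^*\|_{\infty}\ll N^3Q^{-1}$, and your sketch omits its most delicate ingredient. Off the support of $U^*$ one needs the contrapositive of Lemma \ref{ma-est} (or a Weyl-type dichotomy) to get $|V(\alv)|\ll NQ^{-1/3}$, which is fine. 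But \emph{on} the support of $U^*$ the difference
\begin{align*}
\bigl(q^{-1}V(q,\av)v(\bev)+\Delta\bigr)^3-q^{-3}V(q,\av)^3v(\bev)^3\psi(\bev)
\end{align*}
has two error sources: the terms involving the Vaughan error $\Delta\ll Q^3$, which are harmless, and the term $q^{-3}V(q,\av)^3v(\bev)^3(1-\psi(\bev))$, which is \emph{not} small for trivial reasons — it is $O(N^3)$ pointwise. One must observe that $1-\psi(\bev)\neq 0$ forces $|\beta_i|\geq Q^i/N^i$ for some $i$, at which point Lemma \ref{v-est} gives $|v(\bev)|\ll NQ^{-1/2}$ and hence a saving of $Q^{-3/2}$. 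Without this step the claimed factor $Q^{-1}$ does not materialize, so as written your argument for $\sup_{\alv}|W'(\alv)|\ll N^3Q^{-1}\log^2 N$ is incomplete.
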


\begin{proof}
By inserting \eqref{eq-Tint} into the first interal, we obtain
\begin{align*}
\int_{\T^2} |W_Y(\alv)|^2 \,d\alv 
= \int_{\T^6}U_Y(\bev) \overline{U_Y(\gamv)}  L(\alv-\bev) \overline{L(\alv-\gamv)}\,d\bev d\gamv d\alv.
\end{align*}
The integration with respect to $\alv$ can be performed explicitly by writing out the definition of 
the linear exponential sum $L$ and using orthogonality.
Changing the order of summation and integration, we arrive at
\begin{align*}
\int_{\T^2} |W_Y(\alv)|^2 \,d\alv  = \sum_{\mv \leq 3(N,N^2)} \Big|\int_{\T^2} U_Y(\bev) e(-\bev \cdot \mv)\,d\bev \Big|^2.
\end{align*}
By the Bessel-inequality on the Hilbert space $L^2(\T^2)$, we can bound this
by $\|U_Y\|^2_2$. We insert the definition \eqref{eq-Uj} of $U_Y(\alv)$ and recall that the functions 
in this sum have disjoint supports. We obtain
\begin{align*}
\|U_Y\|^2_2 = \sum_{Y \leq q < 2Y} q^{-6}  \sum_{(\av;q) = 1} |V(q,\av)|^6 
\int_{\T^2} |v(\bev - \av/q)|^6 \psi^2(\bev - \av/q)\,d\bev.
\end{align*}
This is $O(N^3)$ by Lemma \ref{v-est2} and Lemma \ref{exp-est}.

We proceed with the corresponding bound for $W'$ and observe that by \eqref{eq-TUsum} we have
\begin{align*}
\int_{\T^2} |W'(\alv)|^2 \,d\alv \ll \int_{\T^2} |W(\alv)|^2 \,d\alv 
+ \int_{\T^2} \Big|\sum_{Y \in J}W_Y(\alv)\Big|^2 \,d\alv.
\end{align*}
By Lemma \ref{R2-est}, the bound for the first integral on the right hand side is
\begin{align*}
\int_{\T^2} |W(\alv)|^2 \,d\alv = \sum_{\mv \leq 3(N,N^2)} R(\mv)^2 \ll N^3 \log N.
\end{align*} 
To estimate the second integral, we use the estimates for $W_Y(\alv)$ and the Cauchy-Schwarz-inequality
for the sum over $Y \in J$. This leads to a bound of $O(N^{3}\log^2 N)$.

Now we turn to the second part of the proposition concerning the $L^{\infty}$-estimates. 
Using \eqref{eq-Tint} we obtain for $W_Y(\alv)$ the bound
\begin{align*}
|W_Y(\alv)| \leq \int_{\T^2} |U_Y(\bev)| |L(\alv-\bev)| \,d\bev.
\end{align*}
Using definition \eqref{eq-Uj} of $U_Y(\alv)$ and Lemma \ref{exp-est} again, this gives the bound
\begin{align*}
|W_Y(\alv)| & \leq  \sum_{Y \leq q < 2Y} q^{-3} \sum_{(\av;q) = 1} |V(q,\av)|^3
\int_{\T^2} |v(\bev - \av/q)|^3 \psi(\bev - \av/q) |L(\alv-\bev)|\,d\bev \\
& \ll  Y^{-3/2} \sum_{Y \leq q < 2Y} \sum_{(\av;q) = 1}
\int_{\T^2} |v(\bev - \av/q)|^3 \psi(\bev - \av/q) |L(\alv-\bev)|\,d\bev.
\end{align*}
It remains to estimate the inner integral in dependence on $\av$ and $q$ 
in such a way that the sum is of size $O(N^3)$.
For fixed $\alv$, there is at most one triple $(q,\av)$ such that 
$\|\alv-\av/q\| \leq (2q)^{-2}$. Otherwise, we would get
\begin{align*}
(qq')^{-1} \leq |a_1/q-a'_1/q'| \leq (2q)^{-2} + (2q')^{-2},
\end{align*} 
which is impossible if $q$ and $q'$ differ by a factor of at most $2$.
For this triple we apply the Cauchy-Schwarz-inequality and obtain
\begin{align*}
& \int_{\T^2} |v(\bev - \av/q)|^3 \psi(\bev - \av/q) |L(\alv-\bev)|\,d\bev \\
\leq & \Big( \int_{\T^2} |v(\bev - \av/q)|^6 \psi^2(\bev - \av/q) \,d\bev \Big)^{1/2}
\Big( \int_{\T^2} |L(\alv-\bev)|^2\,d\bev \Big)^{1/2}.
\end{align*}
This is $O(N^3)$ by Lemma \ref{v-est2} and orthogonality.
For the other triples $(q,\av)$ we use the trivial estimates $|L(\alpha)| \leq \|\alpha\|^{-1}$
and $|v(\bev - \av/q)| \leq N$. This leaves us with the bound for the remaining part of the form
\begin{align*}
N^3 \sum_{Y \leq q < 2Y} \sum_{\substack{(\av;q) = 1}}^*
\int_{\T^2} \psi(\bev - \av/q) \|\alpha_1-\beta_1\|^{-1} \|\alpha_2-\beta_2\|^{-1}  \,d\bev.
\end{align*}
The $*$ at the sum indicates that we are leaving out the (possibly existing) triple $(q,\av)$ with
$\|\alv-\av/q\| \leq (2q)^{-2}$.
This implies that $\alv$ is at least $(3q)^{-2}$ apart from the support of any remaining
$\psi(\bev - \av/q)$ and $\|\alpha_i-\beta_i\|^{-1}$ can be bounded by $O(Q^2)$.
The support of $\psi$ is of size $O(Q^3N^{-3})$, which allows us to use the crude estimate 
$\int_{\Omega} f(x) \,d\mu \leq \mu(\Omega) \sup_{x \in \Omega} |f(x)|$ for each summand.
It leads to a bound of size $O(Q^{10})$ for the whole expression, which is far better than needed.

Now we come to the last part of the proof, the $L^{\infty}$-estimate for $W'$.
Write 
\begin{align} \label{eq-TUsum}
U^*(\alv) = \sum_{Y \in J} U_Y(\alv)
\end{align} 
to simplify notation. We can rewrite the identity $W(\alv) = V^3(\alv)$ in a slightly complicated way as
\begin{align*}
W(\alv) =  \int_{\T^2} V^3(\bev) L(\alv-\bev) \,d\bev,
\end{align*}
as can be seen by calculating the right hand side explicitly.

By Lemma \ref{lin-est} and equations \eqref{eq-Wdec} and \eqref{eq-Tint}, we can write
\begin{align*}
|W'(\alv)| \leq  \int_{\T^2} |V^3(\bev) - U^*(\bev)| |L(\alv-\bev)| \,d\bev
\leq \|V^3 - U^*\|_{\infty} \log^2 N
\end{align*}
where the $L^\infty$-norm is taken over 
$\T^2 = [2QN^{-1}, 1 + 2QN^{-1}] \times [2Q^2 N^{-2}, 1 + 2Q^2 N^{-2}]$.

If we are outside the support of $U^*$, we have $|V(\alv)| \ll NQ^{-1/3}$ by Lemma $\ref{ma-est}$.
This leads to the bound $O(N^3Q^{-1}\log^2 N)$.
If $\alv$ is in the support of $U^*$ (major arc case), we have
an approximation $\alpha_i = a_i/q + \beta_i$ with $q \leq 4Q$ and $|\beta_i| \leq 2Q^i N^{-i}$,
where $\av,q$ and $\bev$ are defined as in Lemma \ref{Vau-thm}. 
By \eqref{eq-Uj}, \eqref{eq-TUsum} and the choice of the parameter $Q$ we see that $U^*$ is
a sum of disjointly supported functions. 
Unsing the approximation for $V(\alv)$ from Lemma \ref{Vau-thm}, we obtain the expression 
\begin{align*}
V^3(\alv) - U^*(\alv) = (q^{-1} V(q,\av) v(\bev) + \Delta)^3 - q^{-3} V(q,\av)^3 v(\bev)^3 \psi(\bev).
\end{align*}
The trivial estimate is $O(N^3)$ and the task is to save a factor of $Q$.
We have $\Delta \ll Q^3$ and can safely ignore terms involving $\Delta$ as long as $Q^3 \leq NQ^{-1}$,
which is true by our choice of $Q$. We are left with the task to estimate 
\begin{align*}
q^{-3} V(q,\av)^3 v(\bev)^3 (1-\psi(\bev)^3) \leq v(\bev)^3 (1-\psi(\bev)^3).
\end{align*}
This is zero if $\psi(\bev) = 1$. Otherwise, we have $|\beta_i| \geq Q^i/N^i$ for some $i \in \{1,2\}$ by \eqref{eq-ind}
and Lemma \ref{v-est} provides $|v(\bev)| \ll NQ^{-1/2}$.
This gives an estimate of the shape $O(N^3Q^{-3/2})$ which is acceptable as long as $Q$ is a small power of $N$.
\end{proof}

The next part is concerned with the arithmetic estimates.
\begin{prop} \label{arith-est}
For each $k \in \N$ with $Q^{4k} \leq N$ we have
\begin{align*} 
\sum_{\mv \leq 3(N,N^2)} |R_Y(\mv)|^{2k} \ll_{\epsilon,k} Y^{\epsilon} N^3 \mbox{\quad and \quad} 
\sum_{\mv \leq 3(N,N^2)} |R'(\mv)|^{2k} \ll_{\epsilon,k} N^{3+\epsilon}.
\end{align*}
\end{prop}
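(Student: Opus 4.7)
The plan is to unfold $R_Y(\mv)$ via \eqref{eq-Rsum} and \eqref{eq-Uj}, translating $\alv \mapsto \alv + \av/q$ inside the integral to factor it into an arithmetic piece and the analytic integral $\int_{\T^2} v(\bev)^3 \psi(\bev) e(-\bev \cdot \mv) \,d\bev$, which is $O(1)$ by Lemma \ref{v-est3}. This reduces the first claim to bounding the $2k$-th moment in $\mv$ of
\begin{align*}
\beta(\mv) := \sum_{Y \leq q < 2Y} q^{-3} \sum_{(\av;q)=1} V(q,\av)^3\, e_q(-\av \cdot \mv).
\end{align*}

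I next apply Lemma \ref{transform} to recast $\beta(\mv)$ as a one-dimensional exponential sum in $m_2$ parameterised by $m_1$, namely
\begin{align*}
\beta(\mv) = \sum_{Y \leq q < 2Y} q^{-2} \sum_{(a;q)=1} G_{m_1}(q,a)\, e_q(-a m_2).
\end{align*}
The key input is the uniform Gauss sum bound $|G_{m_1}(q,a)| \ll_\epsilon q^{1+\epsilon}$ for every $m_1$ and every $(a;q)=1$: completing the square in the non-degenerate binary form $F_2$ (of discriminant $-12$) shifts the variables and produces only an $m_1$-dependent phase, reducing $G_{m_1}(q,a)$ to a purely quadratic two-variable Gauss sum, whose modulus is controlled by standard one-variable Gauss sum estimates together with CRT multiplicativity for composite $q$. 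Setting $g(a,q) := q^{-2} G_{m_1}(q,a)\, \einsv_{(a;q)=1}\, \einsv_{Y \leq q < 2Y}$ one obtains $|g(a,q)| \ll q^{-1+\epsilon}$ uniformly in $m_1$, and Lemma \ref{2g-est} applied with $M = 3N^2$ and $X = 2Y$ (where the hypothesis $X^{4k} \leq M$ follows from $Q^{4k} \leq N$) produces $\sum_{m_2 \leq 3N^2} |\beta(\mv)|^{2k} \ll_{k,\epsilon} N^2 Y^{\epsilon}$ for each fixed $m_1$. Summing trivially over $m_1 \leq 3N$ then delivers $\sum_\mv |R_Y(\mv)|^{2k} \ll_{k,\epsilon} N^3 Y^{\epsilon}$.

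For the second estimate I write $R' = R - \sum_{Y \in J} R_Y$ and use the generalised triangle inequality $|x_0 + \cdots + x_n|^{2k} \leq (n+1)^{2k-1}\sum_i |x_i|^{2k}$, at the cost of a factor $(|J|+1)^{2k-1} \ll (\log Q)^{2k}$, to reduce matters to moments of $R$ and of the individual $R_Y$. The $R_Y$-contributions are controlled by the first part, while for $R$ itself I interpolate the pointwise bound $R(\mv) \ll_\epsilon N^\epsilon$ against $\sum_\mv R(\mv)^2 \ll N^3 \log N$, both supplied by Lemma \ref{R2-est}, to obtain $\sum_\mv R(\mv)^{2k} \leq (\max_\mv R(\mv))^{2k-2}\sum_\mv R(\mv)^2 \ll_{k,\epsilon} N^{3+\epsilon}$. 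Since $Q$ is a small power of $N$, all losses of shape $Y^\epsilon$, $Q^\epsilon$ and $(\log N)^{O(k)}$ are absorbed into $N^{\epsilon}$, producing the stated bound. The principal obstacle is the uniform-in-$m_1$ Gauss sum estimate for $G_{m_1}(q,a)$ on composite moduli: one must track the prime factorisation of $q$ carefully, especially at primes dividing the discriminant of $F_2$, and verify that the $m_1$-dependent linear terms genuinely factor out as a phase so that no uniformity in $m_1$ is sacrificed.
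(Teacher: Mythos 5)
Your proposal is correct and follows essentially the same route as the paper: unfold $R_Y$ via \eqref{eq-Rsum} and \eqref{eq-Uj}, separate off the analytic integral with Lemma \ref{v-est3}, pass to a one-dimensional exponential sum with Lemma \ref{transform}, bound the Gauss sum, apply Lemma \ref{2g-est} with $M = 3N^2$ and sum trivially over $m_1$; the $R'$ estimate is likewise handled by the same decomposition, H\"older/interpolation and Lemma \ref{R2-est}. The only remark worth making is that the ``principal obstacle'' you flag --- the uniform-in-$m_1$ bound for $G_{m_1}(q,a)$ --- is already supplied by Lemma \ref{exp-est} in the appendix (uniformity in the affine linear part $F_1$ is built into its statement), and that lemma gives the clean bound $|G_{m_1}(q,a)| \ll q$ rather than $q^{1+\epsilon}$, which is what you need to satisfy the hypothesis $|g(a,q)| \leq Cq^{-1}$ of Lemma \ref{2g-est} literally rather than after a further (harmless but unstated) modification of that lemma.
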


\begin{remark}
Notice that we can choose $k$ as big as we like by reducing $Q$.
\end{remark}

\begin{proof}
We plug in formula \eqref{eq-Uj} into \eqref{eq-Rsum}. This gives an expression for $R_Y$
which we transform by changing the order of summation and integration and the change of 
variables $\alv = \av/q + \bev$ into
\begin{align*} 
R_Y(\mv) =  \sum_{Y \leq q < 2Y} \frac{1}{q^3} \sum_{(\av;q) = 1} V(q,\av)^3 e_q(-\av \cdot \mv)
\int_{\T^2} \psi(\bev) v(\bev)^3 e(-\bev \mv) \,d\bev.
\end{align*}
This form is more suitable for estimates, because the arithmetic part 
and the analytic part are separated.
By Lemma \ref{v-est3}, the integral, appearing in the formula for $R_Y$, may be estimated by a constant.
Using Lemma \ref{transform}, we can write the arithmetic part in the form 
\begin{align*}
\frac{1}{q^3}  \sum_{(\av;q) = 1} V(q,\av)^3 e_q(-\av \cdot \mv) 
=  \frac{1}{q^2} \sum_{\substack{a=1 \\ (a;q) = 1}}^q G_{m_1}(q,a)\, e_q(-am_2).
\end{align*}
Lemma \ref{exp-est} provides the estimate $q^{-2}G_{m_1}(q,a) \ll q^{-1}$ if $(a;q) = 1$ and 
we set $G_{m_1}(q,a) = 0$ if $(a;q) > 1$. By Lemma \ref{2g-est}, we have
\begin{align*} 
\sum_{m_2 \leq 3N^2} |R_Y(\mv)|^{2k} \ll_{\epsilon,k} Y^{\epsilon} N^2
\end{align*}
as long as $Q^{4k} \leq N$, which implies the first result by summing over $m_1$.

For the second estimate, we observe that $R'(\mv) = R(\mv) - \sum_{Y \in J} R_Y(\mv)$.
The result follows from an application of H\"older's inequality, part one of this proof and Lemma \ref{R2-est}.
\end{proof}

\section{Finding integer solutions of the system} \label{sec-Circle}

This is the part of our paper which has not changed significantly compared
to the work of Smith \cite{Smith}. We use the circle method to prove the following proposition, 
which together with Theorem 2.1 concludes the proof of Theorem 1.1.
Write $Z(N)$ for the number of solutions to \eqref{eq-sys} with $x_i \in \{1,\ldots,N\}$.

\begin{prop} \label{Low}
Assume that the conditions of Section \ref{introduction} hold. Then $Z(N) \gg N^{s-3}$, where
the implied constant is only dependent on the coefficients $\lambda_i$ of the system.
\end{prop}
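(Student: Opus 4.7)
The plan is to evaluate the integral
\begin{align*}
Z(N) = \int_{\T^2} \prod_{i=1}^s V(\lambda_i \alv) \, d\alv
\end{align*}
by the Hardy--Littlewood circle method. Decompose $\T^2$ into a major-arc set $\Ma$, given by the union of boxes $\{\alv : |q\alpha_i - a_i| \leq Q^i N^{-i}\}$ over reduced fractions $\av/q$ with $q \leq Q$ (disjointness is guaranteed by the standing assumption $64\,Q^{52} \leq N$), and the complementary minor arcs $\ma = \T^2 \setminus \Ma$. The goal is to produce a main term of order $N^{s-3}$ from $\Ma$ and show that the minor-arc contribution is of strictly smaller order.

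On $\Ma$, I would apply the approximation $V(\alv) = q^{-1} V(q,\av)v(\bev) + O(Q^3)$ from Lemma \ref{Vau-thm} at each coordinate, multiply out the $s$ factors, and discard cross terms using the trivial bound $|V(\alv)| \leq N$; the error is acceptable because $Q$ is a small power of $N$. What remains factorises as $\Sing(Q)\, \mathfrak{J}(Q)$, where
\begin{align*}
\Sing(Q) = \sum_{q \leq Q} q^{-s} \sum_{(\av;q)=1} \prod_{i=1}^s V(q, \lambda_i \av), \qquad
\mathfrak{J}(Q) = \int_{|\bev| \leq Q/N \times Q^2/N^2} \prod_{i=1}^s v(\lambda_i \bev) \, d\bev.
\end{align*}
A standard rescaling $\bev \mapsto (N\beta_1, N^2 \beta_2)$ shows $\mathfrak{J}(Q) = N^{s-3} \mathfrak{J}_\infty(Q)$, with $\mathfrak{J}_\infty(Q)$ converging, as $Q \to \infty$, to the density of real solutions of the system in the unit box; condition $(iii)$ produces a non-singular real zero of \eqref{eq-sys}, so this limit is a strictly positive constant. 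For $\Sing(Q)$, I would use multiplicativity to write the completed series as $\prod_p \chi_p$ with $\chi_p$ the $p$-adic density of solutions; since $s \geq 7$ and the system is a diagonal quadric together with a linear form satisfying $(iii)$, Hensel lifting from Chevalley--Warning style mod-$p$ solutions gives $\chi_p > 0$ uniformly and $\chi_p = 1 + O(p^{-2})$, so $\prod_p \chi_p$ converges to a positive constant and $\Sing(Q) \gg 1$ uniformly in $Q$.

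For the minor arcs, the natural bound is H\"older's inequality: after a change of variables and the 1-periodicity of each $V(\lambda_i \alv)$ in $\alv$, we get
\begin{align*}
\left| \int_{\ma} \prod_{i=1}^s V(\lambda_i \alv) \, d\alv \right|
\ll \Big( \sup_{\alv \in \ma} |V(\alv)| \Big)^{s-p} \int_{\T^2} |V(\alv)|^p \, d\alv
\end{align*}
for any $p$ with $6 < p < s$, which is possible exactly because $s \geq 7$. Theorem \ref{SA-est} controls the $L^p$-mean by $N^{p-3}$, while the pointwise minor-arc estimate $|V(\alv)| \ll NQ^{-1/3}$ from Lemma \ref{ma-est} handles the supremum. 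This yields a minor-arc contribution of size $O(N^{s-3} Q^{-(s-p)/3})$, which is $o(N^{s-3})$ once $Q$ is a suitable (small) power of $N$.

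The main obstacle will be verifying the uniform positivity of $\Sing(Q)$: in particular, checking local solubility at small primes $p$ where the diagonal quadric combined with the linear relation $\sum \lambda_i x_i = 0$ may behave badly, and arranging the Hensel lifting so that singular (i.e.\ identically zero modulo $p$) solutions are separated from non-singular ones that yield the required positive contribution. Everything else amounts to careful but standard bookkeeping.
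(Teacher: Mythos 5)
Your overall architecture matches the paper's: the same major/minor-arc dissection, the Vaughan-type approximation of Lemma \ref{Vau-thm} on the major arcs, factorisation into a truncated singular series and singular integral, and a minor-arc bound combining the pointwise estimate of Lemma \ref{ma-est} with a moment estimate. (The paper uses the sixth moment supplied by Lemma \ref{R2-est} where you invoke Theorem \ref{SA-est} with $6<p<s$; both work, and the difference is cosmetic.)

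The genuine gap is exactly the one you flag and then leave open: positivity of the local factors. Chevalley--Warning does give a nontrivial zero of the pair (quadratic, linear) modulo $p$, since the total degree $3$ is less than $s\ge 7$; but that zero may well be \emph{singular}, and Hensel's lemma cannot be applied to it. A solution of \eqref{eq-sys} is non-singular precisely when $x_i\neq x_j$ for some $i\neq j$ (the relevant $2\times 2$ Jacobian minors are $2\lambda_i\lambda_j(x_i-x_j)$), and by condition (i) every diagonal point $(c,c,\dots,c)$ is a solution; there are exactly $p$ of these mod $p$, so the Chevalley--Warning divisibility is consistent with \emph{only} singular solutions existing, and your lifting step has nothing to start from. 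The paper's fix is different: substitute the linear equation into the quadratic one to get a single quadratic form in $s-1$ variables, fix one variable equal to $0$, and apply Meyer's theorem in the remaining $s-2\ge 5$ variables to obtain a nontrivial $p$-adic zero; since one coordinate is $0$ and another is nonzero, the resulting solution of the original system has two distinct coordinates, hence is non-singular, and the quantitative Hensel lemma (Lemma \ref{Hensel-thm}) then gives $S(p^k)\gg p^{(k-u)(s-2)}$ and $T(p)>0$. You need this, or an equivalent device, to close the argument. A smaller omission of the same flavour: you assert that condition (iii) produces a non-singular real zero; the paper proves it by perturbing the trivial solution $(1,\dots,1)$ in two positive- and two negative-coefficient coordinates, reducing to $\lambda_1\lambda_2(\lambda_1+\lambda_2)\theta^2=\mu_1\mu_2(\mu_1+\mu_2)\varphi^2$, which has a solution with $\theta,\varphi>0$ and hence $x_1\neq x_2$; the lower bound $J\gg N^{s-3}$ then follows from the implicit function theorem, not merely from the existence of the limit you describe.
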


By orthogonality, we can rewrite $Z(N)$ as the integral
\begin{align} \label{eq-intrep}
Z(N) = \int_{\T^2} \prod_{i = 1}^s V(\lambda_i\alv) \,d\alv.
\end{align}
Define the major arcs $\Ma \subset \T^2$ to be the (disjoint) union of 
\begin{align*}
\Ma(q,\av) = \{\alv \in \T^2: \|\alpha_i-a_i/q_i\| \leq Q^iN^{-i}\},
\end{align*}
where $q \leq Q$ and $\av \leq q$ with $(\av;q) = 1$.
The minor arcs are the complement $\ma = \T^2\backslash \Ma$.

We decompose the right hand side of \eqref{eq-intrep} into an integral over $\Ma$ and one over $\ma$.
By using Lemma \ref{Vau-thm} and the crude estimates  
$|V(q,\lambda_i\av)| \leq q$, $|v(\alpha)| \leq N$, $|\Delta| \leq Q^{3}$ and the bound 
$\mu(\Ma(q,\av)) \leq Q^3/N^{3}$ for the Lebesgue measure of $\Ma$, we obtain by a standard calculation
\begin{align*}
\int_{\Ma} \prod_{i = 1}^s V(\lambda_i\alv) \,d\alv = J(Q/N) \Sing(Q) + O(Q^9 N^{s-4}),
\end{align*}
with the truncated singular integral 
\begin{align*}
J(Q/N) = \int_{|\alpha_1| \leq Q/N}\int_{|\alpha_2| \leq Q^2/N^2} \prod_{i = 1}^s v(\lambda_i \alv) \, d\alv,
\end{align*}
and the truncated singular series
\begin{align} \label{eq-sing}
\Sing(Q) = \sum_{q \leq Q} q^{-s} \sum_{(\av;q) = 1} \prod_{i = 1}^s V(q,\lambda_i\av).
\end{align}
It follows from Lemma \ref{exp-est} and the condition $s \geq 7$ that the limit 
$\Sing := \lim_{Q \to \infty} \Sing(Q)$ exists and we have $\Sing(Q) = \Sing + O(Q^{-1/2})$.
Similarly, we complete $J(Q/N)$ to an integral
\begin{align} \label{eq-J}
J = \int_{\R^2} \prod_{i = 1}^s v(\lambda_i \alv) \, d\alv
\end{align}
over $\R^2$. Convergence and the formula $J(Q/N) = J + O(N^{s-3}Q^{-1/2})$ follow by Lemma \ref{v-est}.
This gives a major arcs contribution of
\begin{align*}
\int_{\Ma} \prod_{i = 1}^s V(\lambda_i \alv) \, d\alv = \Sing J + O(Q^9N^{s-4} + N^{s-3}Q^{-1/2}).
\end{align*}
For $\alv \in \ma$ we observe the bound $O(NQ^{-1/3})$ for the $L^\infty$-norm of $V(\alv)$ by Lemma $\ref{ma-est}$.
The actual exponential sums $V(\lambda_i \alv)$ can be bounded similarly by $O(NQ^{-1/3})$, 
where the implied constant is now dependent on the fixed coefficient $\lambda_i \in \Z$.
Using H\"older's inequality and the bound from Lemma \ref{R2-est} for the sixth moment of the exponential sum, 
we obtain
\begin{align*}
\int_{\ma} \prod_{i = 1}^s V(\lambda_i \alv) \, d\alv 
\ll (NQ^{-1/3})^{s-6} \int_{\T^2} |V(\alv)|^6 \, d\alv \ll_{\epsilon} N^{s-3+\epsilon}Q^{-1/3}.
\end{align*}
Setting $Q= N^{4\eta}$ for a small $\eta > 0$, the number of solutions is given by
\begin{align*}
Z(N) = \Sing J + O(N^{s-3-\eta}).
\end{align*}
If we can show that $\Sing > 0$ and $J \gg N^{s-3}$, then Proposition \ref{Low} is proven.
These are the remaining tasks for this section.

\begin{lem} \label{lem-sing}
We have $\Sing > 0$.
\end{lem}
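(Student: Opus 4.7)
The plan is a standard circle-method argument. Convergence of $\Sing(Q) \to \Sing$ is already in hand (Lemma~\ref{exp-est} and $s \geq 7$), so it remains to rule out vanishing of the limit. I would factor $\Sing$ as an Euler product of local $p$-adic densities $\sigma_p$ and verify each factor is strictly positive. The convergence estimate already available then upgrades this to positivity of the whole product.

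First, a standard Chinese Remainder Theorem manipulation shows that $A(q) := q^{-s} \sum_{(\av;q)=1} \prod_{i=1}^s V(q, \lambda_i \av)$ is multiplicative in $q$: for coprime $q_1,q_2$ one factors each $V(q_1 q_2,\lambda_i\av)$ after decomposing the residue variable $r$ and the dual variable $\av$ via CRT. This yields $\Sing = \prod_p \sigma_p$ with $\sigma_p := 1 + \sum_{k \geq 1} A(p^k)$. Orthogonality of additive characters then gives
\begin{align*}
\sum_{k=0}^{K} A(p^k) = p^{-K(s-2)} M_p(K),
\end{align*}
where $M_p(K)$ counts the solutions of \eqref{eq-sys} in $(\Z/p^K\Z)^s$, so $\sigma_p = \lim_K p^{-K(s-2)} M_p(K) \geq 0$.

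The main step is showing $\sigma_p > 0$ for every prime $p$. I would produce a non-singular $p$-adic solution and invoke Hensel's lemma to conclude $M_p(K) \gg p^{K(s-2)}$. By translation invariance (condition~(i)), the diagonal $(c,\ldots,c)$ is always a solution and any perturbation $x_i = c + y_i$ satisfies \eqref{eq-sys} iff $\yv$ does; moreover the Jacobian
\begin{align*}
\begin{pmatrix} 2\lambda_1 y_1 & \cdots & 2\lambda_s y_s \\ \lambda_1 & \cdots & \lambda_s \end{pmatrix}
\end{align*}
has rank $2$ exactly when the $y_i$ are not all equal (since $\lambda_i \neq 0$). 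Condition~(iii) supplies indices $i_1,i_2$ with $\lambda_{i_j}>0$ and $i_3,i_4$ with $\lambda_{i_j}<0$; setting $y_i = 0$ for $i \notin \{i_1,\ldots,i_4\}$, the system reduces to a translation-invariant $4$-variable quadratic system. Eliminating the linear equation, one obtains a non-degenerate ternary quadratic form which is indefinite over $\R$ by the sign pattern, hence isotropic over every $\Q_p$ (by Chevalley--Warning plus Hensel for odd $p$, and by the indefinite sign pattern forcing isotropy at $p=2$). This produces the desired non-singular $p$-adic zero and therefore $\sigma_p > 0$.

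Finally, the Gauss sum estimate underlying Lemma~\ref{exp-est} gives $|A(p^k)| \ll p^{-k(s-4)/2}$, hence $\sigma_p = 1 + O(p^{-3/2})$ for all but finitely many primes. Consequently $\prod_p \sigma_p$ converges absolutely, and combined with the positivity $\sigma_p > 0$ at every prime this yields $\Sing > 0$. The principal technical obstacle is the construction of non-singular seed solutions at exceptional primes -- namely $p = 2$ and the primes dividing some $\lambda_i$ -- where the ternary form may be degenerate or the Jacobian may drop rank modulo $p$; at these primes one must Hensel-lift from a solution modulo a suitably large power $p^k$ chosen in terms of the $p$-adic valuations of the coefficients.
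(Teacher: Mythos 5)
Your overall architecture is the same as the paper's: multiplicativity of $V(q)$, the Euler product $\Sing=\prod_p T(p)$, the identification of $T(p)$ with $\lim_k p^{(2-s)k}S(p^k)$, absolute convergence from the Gauss-sum bound, and a non-singular $p$-adic seed solution fed into Hensel's lemma. The gap is in the key step, the construction of that seed solution. You restrict to a $4$-variable subsystem (two positive and two negative coefficients, all other $y_i=0$), eliminate the linear equation, and assert that the resulting ternary quadratic form is isotropic over every $\Q_p$ because it is indefinite over $\R$. That implication is false: real signature says nothing about $p$-adic isotropy, and anisotropic non-degenerate ternary forms exist over every $\Q_p$ (the $u$-invariant of $\Q_p$ is $4$, so only forms in $\geq 5$ variables are automatically isotropic). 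Chevalley--Warning plus Hensel rescues the primes where the ternary form is non-degenerate mod $p$, but the ``exceptional primes'' you defer to the end are precisely where the lemma has content, and your proposed remedy --- Hensel-lifting from a solution modulo a large power $p^k$ --- presupposes the existence of a non-trivial $p$-adic solution of the subsystem, which is exactly what is in question and may genuinely fail for a ternary form. (A further small inaccuracy: the eliminated ternary form need not be non-degenerate; for coefficients $1,1,-1,-1$ it has rank $2$.)

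The paper sidesteps all of this by not throwing away variables: it substitutes the linear equation into the quadratic one to get a form in $s-1\geq 6$ variables, fixes one more variable to be $0$ (to force non-singularity, i.e.\ not all coordinates equal), and applies Meyer's theorem / the local theory to the remaining $s-2\geq 5$ variables, where isotropy over $\Q_p$ is automatic for every $p$ with no case analysis at bad primes. Note also that condition (iii) is not needed for the $p$-adic step at all in the paper --- it is used only for the real (singular integral) analogue in Lemma \ref{J-est}. To repair your argument you would either have to prove isotropy of your specific ternary form at every prime (which does not follow from anything you cite) or, more simply, keep all $s-2\geq 5$ free variables as the paper does.
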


\begin{proof}
We denote the summand in \eqref{eq-sing} by
\begin{align*}
V(q) = q^{-s} \sum_{(\av;q) = 1} \prod_{i = 1}^s V(q,\lambda_i\av).
\end{align*}
It is not too difficult to check that $V(qt,\lambda_it\av+\lambda_iq\bv) = V(q,\lambda_i\av)V(t,\lambda_i\bv)$
as long as $(t;q) = 1$.
This implies that $V(q)$ is multiplicative by another short calculation.
For details, the reader is referred to \cite[p. 20]{Vau}.
Furthermore $V(q) \ll q^{-3/2}$ by Lemma \ref{exp-est} and $s \geq 7$. If we define
\begin{align*}
T(p) := \sum_{k=0}^{\infty} V(p^k)
\end{align*}
as the part corresponding to the prime $p$, then $T(p) = 1 + O(p^{-3/2})$ 
and we can write $\Sing$ as an absolutely convergent product $\Sing = \prod_p T(p)$.
Therefore, we have $\prod_{p \geq p_0} T(p) \geq 1/2$ for some $p_0$.
To show that $T(p) > 0$ for the finitely many remaining factors, we relate
$T(p)$ to the number of solutions modulo $p^k$ and the result follows from the following two lemmata.
\end{proof}

\begin{lem} \label{p-adic}
We have
\begin{align*}
T(p) = \lim_{k \to \infty} p^{(2-s)k} S(p^k)
\end{align*}
where $S(q)$ is the number of solutions to the system \eqref{eq-sys} modulo $q$.
\end{lem}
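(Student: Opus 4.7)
The plan is to express $S(p^k)$ via orthogonality, stratify the resulting sum over $\av \pmod{p^k}$ according to $(\av;p^k)$, and recognize the answer as a partial sum of the series defining $T(p)$.

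First, detecting the two congruence conditions by additive characters modulo $p^k$ gives
\begin{align*}
S(p^k) = p^{-2k} \sum_{\av=1}^{p^k} \sum_{\xv \pmod{p^k}} e_{p^k}\Big(a_1 \sum_{i} \lambda_i x_i + a_2 \sum_{i} \lambda_i x_i^2\Big)
= p^{-2k} \sum_{\av=1}^{p^k} \prod_{i=1}^{s} V(p^k,\lambda_i \av).
\end{align*}
Thus $p^{(2-s)k}S(p^k) = p^{-sk}\sum_{\av} \prod_i V(p^k,\lambda_i \av)$, and it suffices to match this with $\sum_{m\leq k} V(p^m)$.

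Next I stratify by $(\av;p^k)$. Writing $\av = p^e \bv$ with $0\leq e\leq k$ and $(\bv;p^{k-e})=1$ (with the convention that $e=k$ contributes the single term $\bv=\nullv$), the identity $e_{p^k}(p^e\cdot m) = e_{p^{k-e}}(m)$ together with the collapse of an arithmetic progression of length $p^e$ gives the scaling relation
\begin{align*}
V(p^k,\lambda_i p^e \bv) = p^e\, V(p^{k-e},\lambda_i \bv),
\end{align*}
so that $\prod_i V(p^k,\lambda_i \av) = p^{es}\prod_i V(p^{k-e},\lambda_i \bv)$. Summing over $e$ and setting $m=k-e$,
\begin{align*}
p^{-sk}\sum_{\av \pmod{p^k}} \prod_i V(p^k,\lambda_i \av)
= \sum_{m=0}^{k} p^{-sm} \sum_{(\bv;p^m)=1} \prod_i V(p^m,\lambda_i \bv)
= \sum_{m=0}^{k} V(p^m).
\end{align*}

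Finally, I pass to the limit. The bound $V(p^m) \ll p^{-3m/2}$ established in the proof of Lemma \ref{lem-sing} (using $s\geq 7$ and Lemma \ref{exp-est}) shows that the series $\sum_{m\geq 0} V(p^m) = T(p)$ converges absolutely, hence the partial sums converge to $T(p)$, yielding $\lim_{k\to\infty} p^{(2-s)k} S(p^k) = T(p)$.

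The only step with any subtlety is the stratification by $(\av;p^k)$: one must be careful about the convention when $\av=\nullv$ and verify the scaling identity for $V(p^k,\lambda_i p^e \bv)$; everything else is bookkeeping and an absolute-convergence argument.
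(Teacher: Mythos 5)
Your proposal is correct and follows essentially the same route as the paper: orthogonality, stratification of $\av$ by the exact power of $p$ dividing it, the scaling identity $V(p^k,\lambda_i p^e\bv)=p^eV(p^{k-e},\lambda_i\bv)$ to produce the partial sum $\sum_{m\le k}V(p^m)$, and absolute convergence to pass to the limit. Your version is in fact slightly more careful than the paper's (which sorts by the condition $(a_1;a_2)=p^l$ and states the rescaling of the product somewhat loosely), but there is no substantive difference.
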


\begin{proof} By orthogonality
and a direct calculation, the number of solutions $S(q)$ can be written as
\begin{align*}
S(q) = \sum_{r_1,\ldots,r_s \leq q} q^{-2} \sum_{a_1,a_2 \leq q} 
e_q(a_1(\lambda_1r_1^2 + \ldots + \lambda_sr_s^2)) e_q(a_2(\lambda_1r_1 + \ldots + \lambda_sr_s)).
\end{align*}
Here we are in the special situation of $q = p^k$. We can introduce another parameter $l$ and sort
according to the condition $(a_1;a_2) = p^l$. After rearrangement, we have the intermediate form
\begin{align*}
S(p^k) = p^{-2k} \sum_{l=0}^{k} \sum_{(a_1;a_2) = p^l} \prod_{i = 1}^s V(p^k,\lambda_i a_1, \lambda_i a_2).
\end{align*}
The change $a_i = p^l b_i$ and the corresponding change of $V(p^k,\lambda_i a_1, \lambda_i a_2)$ to
$p^{ls} V(p^{k-l},\lambda_i b_1, \lambda_i b_2)$ together with some elementary transformations
give us
\begin{align*}
\sum_{l=0}^{k} V(p^l) = p^{(2-s)k} S(p^k).
\end{align*}
Taking limits on both sides gives the statement of the lemma.
\end{proof}

The next task is to bound the number of solutions modulo $p^k$ from below.

\begin{lem} \label{plow-thm}
For each prime $p$ there is an $u = u(p) \in \N$, such that
\begin{align*}
S(p^k) \gg p^{(k-u)(s-2)}.
\end{align*}
\end{lem}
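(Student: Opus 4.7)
The plan follows the classical two-step Hensel-lifting template for $p$-adic densities. First, I would produce a single non-singular solution $\xv_0$ modulo some bounded power $p^u$ of $p$; second, I would lift it repeatedly, each lifting step contributing a factor of $p^{s-2}$ to the count of solutions.

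For the lifting step I would invoke a multivariable Hensel lemma. The Jacobian of the two defining polynomials is
\[
J(\xv) = \begin{pmatrix} \lambda_1 & \cdots & \lambda_s \\ 2\lambda_1 x_1 & \cdots & 2\lambda_s x_s \end{pmatrix},
\]
so that the $2 \times 2$ minor indexed by columns $i$ and $j$ equals $2\lambda_i\lambda_j(x_j-x_i)$. A standard Newton-iteration argument shows that if $\xv_0$ solves \eqref{eq-sys} modulo $p^u$ and some such minor of $J(\xv_0)$ has $p$-adic valuation $t$ with $u \geq 2t+1$, then for every $k \geq u$ the solution $\xv_0$ lifts to at least $p^{(k-u)(s-2)}$ solutions of the system modulo $p^k$. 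Indeed, Taylor-expanding around $\xv_0$ reduces each lifting step to a rank-$2$ affine linear system over $\mathbb{F}_p$ in $s$ unknowns, which has exactly $p^{s-2}$ solutions. Iterating from level $u$ to level $k$ immediately gives $S(p^k) \geq p^{(k-u)(s-2)}$, as claimed.

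The main obstacle is the first step: exhibiting a non-singular $\xv_0$ modulo $p^u$ for every prime $p$. I would use the translation invariance to normalise one coordinate (say $x_s = 0$), reducing the problem to solving an auxiliary diagonal quadratic together with a linear side condition in $s-1 \geq 6$ remaining variables. Conditions (ii) and (iii) ($s \geq 7$ with at least two positive and at least two negative $\lambda_i$) are precisely the local solubility hypotheses guaranteeing non-trivial zeros of this reduced system over $\Q_p$ lying off the singular locus. For primes $p \nmid 2\prod_i \lambda_i$, a direct counting argument over $\mathbb{F}_p$ already produces such a zero, and $u(p) = 1$ suffices. For the finitely many exceptional primes (including $p=2$ and those dividing some $\lambda_i$), I would either construct an explicit solution supported on those indices where $\lambda_i$ is a $p$-adic unit, or enlarge $u(p)$ to absorb the extra $p$-adic valuation appearing in the relevant Jacobian minors. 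In either case $u$ depends only on $p$ and the coefficients $\lambda_i$, which is all that the statement requires.
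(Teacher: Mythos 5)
Your overall architecture is the same as the paper's: produce one non-singular $p$-adic (equivalently, mod $p^u$) solution, then count its lifts. Your lifting half is sound and essentially equivalent to what the paper does. The paper organises the count slightly differently -- it lets the last $s-2$ coordinates range freely over the $(p^{k-u})^{s-2}$ residues congruent to the base point mod $p^u$ and uses the two-variable Hensel lemma (Lemma \ref{Hensel-thm}) to complete each choice uniquely in $(x_1,x_2)$ -- whereas you lift level by level via Newton iteration, picking up $p^{s-2}$ solutions of a rank-two linear system at each step. Both give $p^{(k-u)(s-2)}$, and your condition $u\geq 2t+1$ on the valuation $t$ of the relevant $2\times 2$ minor $2\lambda_i\lambda_j(x_j-x_i)$ matches the paper's definition of $u$ via $|\Delta_0|_p^2=p^{-u+1}$.

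The genuine gap is in the existence step, and specifically at the finitely many bad primes. For good primes your counting over $\mathbb{F}_p$ can be made to work (though you must still rule out that all zeros lie on the singular locus $x_1=\dots=x_s$, which contains the trivial solutions guaranteed by condition (i)). But for $p=2$ and for $p\mid\lambda_i$ neither of your two fallbacks produces a non-singular $\Q_p$-point: restricting to the indices where $\lambda_i$ is a unit destroys the translation invariance and may leave too few variables, and ``enlarging $u(p)$'' only relaxes the Hensel hypothesis once a non-singular solution is already in hand -- it does not create one. The paper's mechanism here is Meyer's theorem: substituting the linear equation into the quadratic one yields a single quadratic form in $s-1\geq 6$ variables, which is isotropic over every $\Q_p$ as soon as it has at least five variables; crucially, the paper first pins one variable to $0$ and applies Meyer to the remaining $s-2\geq 5$, so that the resulting non-trivial zero automatically has $x_i\neq x_j$ for some pair and is therefore non-singular. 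Note also that condition (iii) plays no role here (it is needed only for the real solution in Lemma \ref{J-est}); the $p$-adic existence rests entirely on $s\geq 7$ and Meyer's theorem, not on the sign pattern of the $\lambda_i$. Without some substitute for this input, your proof does not establish the lemma for all primes.
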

For the proof we need a version of Hensel's lemma rewritten in the language of $p$-adic valuations.
Denote by $\Z_p$ the $p$-adic integers and let $|\cdot|_p$ be the standard $p$-adic valuation.

\begin{lem} \label{Hensel-thm}
Suppose that $F_1(X_1,X_2), F_2(X_1,X_2) \in \Z_p[X_1,X_2]$, and that $a_0,b_0 \in \Z_p$ satisfy
\begin{align*}
\max\{|F_1(a_0,b_0)|_p,|F_2(a_0,b_0)|_p\} < |\Delta_0|_p^2,
\end{align*}
where $\Delta_0 = \Delta(F_1,F_2)|_{(a_0,b_0)} = 
\left( \frac{\partial F_1}{\partial X_1}\frac{\partial F_2}{\partial X_2} -
\frac{\partial F_2}{\partial X_1}\frac{\partial F_1}{\partial X_2}\right)_{(a_0,b_0)}$
is non-zero, $\frac{\partial F_1}{\partial X_1}$ etc. being formal derivatives.
Then there is a unique $(a,b) \in \Z_p \times \Z_p$, such that $F_1(a,b) = F_2(a,b) = 0$ and
\begin{align*}
\max\{|a-a_0|_p,|b-b_0|_p\} \leq p^{-1} \cdot |\Delta_0|_p.
\end{align*}
\end{lem}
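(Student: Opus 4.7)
The plan is standard $p$-adic Newton iteration. Let $J(x,y)$ denote the Jacobian matrix of $(F_1, F_2)$, so that $\det J = \Delta$, and define inductively
\[ (a_{n+1}, b_{n+1})^T := (a_n, b_n)^T - J(a_n, b_n)^{-1} \bigl(F_1(a_n, b_n), F_2(a_n, b_n)\bigr)^T, \]
where $J^{-1}$ is expanded via the adjugate, so that each correction equals $\Delta(a_n, b_n)^{-1}$ times a $\Z_p$-polynomial expression in the values of the $\partial_i F_j$ and $F_i$ at $(a_n, b_n)$. The induction claim is that $(a_n, b_n) \in \Z_p^2$, that $|\Delta(a_n, b_n)|_p = |\Delta_0|_p$, and that $M_n := \max\{|F_1(a_n,b_n)|_p, |F_2(a_n,b_n)|_p\}$ contracts quadratically in the sense that $M_n/|\Delta_0|_p^2 \leq (M_0/|\Delta_0|_p^2)^{2^n}$.

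For the inductive step, set $h_n = a_{n+1} - a_n$ and $k_n = b_{n+1} - b_n$. The adjugate formula immediately gives $\max(|h_n|_p, |k_n|_p) \leq M_n/|\Delta(a_n,b_n)|_p$, which is strictly less than $|\Delta_0|_p$ by the inductive hypothesis. Since the higher formal partial derivatives of any $P \in \Z_p[X_1,X_2]$ again lie in $\Z_p[X_1,X_2]$, the discrete Taylor expansion
\[ P(a_{n+1}, b_{n+1}) = P(a_n, b_n) + \partial_1 P(a_n, b_n) h_n + \partial_2 P(a_n, b_n) k_n + R_n(P) \]
has remainder satisfying $|R_n(P)|_p \leq \max(|h_n|_p, |k_n|_p)^2$. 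Applied to $P = F_i$, the linear terms vanish by the very design of the Newton step, yielding the quadratic contraction of $M_n$. Applied to $P = \Delta$, the ultrametric inequality together with $\max(|h_n|_p, |k_n|_p) < |\Delta_0|_p$ gives $|\Delta(a_{n+1}, b_{n+1})|_p = |\Delta_n|_p$, closing the induction.

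Completeness of $\Z_p$ then produces a limit $(a,b) \in \Z_p^2$ with $F_1(a,b) = F_2(a,b) = 0$ by continuity. Since the $|h_n|_p$ are strictly decreasing, the ultrametric inequality bounds $|a - a_0|_p \leq |h_0|_p \leq M_0/|\Delta_0|_p$; the hypothesis $M_0 < |\Delta_0|_p^2$ forces $M_0/|\Delta_0|_p < |\Delta_0|_p$, and since $p$-adic valuations are powers of $p^{-1}$ this is the same as $M_0/|\Delta_0|_p \leq p^{-1}|\Delta_0|_p$, giving the stated radius. For uniqueness, suppose $(a',b')$ is another zero in the given ball. Set $\alpha = a'-a$, $\beta = b'-b$; a Taylor expansion of $F_i$ at $(a,b)$ (where $F_i$ vanishes) gives $J(a,b)(\alpha, \beta)^T = O(\max(|\alpha|_p, |\beta|_p)^2)$ entrywise. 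Applying $J(a,b)^{-1}$ and using $|\Delta(a,b)|_p = |\Delta_0|_p$ yields $\max(|\alpha|_p, |\beta|_p) \leq |\Delta_0|_p^{-1} \max(|\alpha|_p, |\beta|_p)^2$, which forces $\alpha = \beta = 0$ since both are at most $p^{-1}|\Delta_0|_p < |\Delta_0|_p$. The main technical obstacle is the careful $p$-adic bookkeeping required to ensure that the Taylor remainder really gains the squared factor and to keep the $|\Delta_0|_p$ versus $|\Delta_0|_p^2$ constants aligned throughout the existence and uniqueness parts.
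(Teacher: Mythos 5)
Your proof is correct, but it is worth noting that the paper does not actually prove this lemma at all: its ``proof'' is a one-line citation to Proposition 5.20 of Greenberg's \emph{Lectures on forms in many variables}. You have therefore supplied the standard argument that lies behind the cited result --- a two-variable $p$-adic Newton iteration with the Jacobian inverted via the adjugate --- rather than matching a written argument in the paper. Your bookkeeping is sound: the invariants $(a_n,b_n)\in\Z_p^2$, $|\Delta(a_n,b_n)|_p=|\Delta_0|_p$, and the quadratic contraction of $M_n$ close the induction; the ultrametric inequality converts $M_0<|\Delta_0|_p^2$ into the stated radius $p^{-1}|\Delta_0|_p$ because $p$-adic absolute values take discrete values; and the uniqueness step correctly derives $\max(|\alpha|_p,|\beta|_p)\geq|\Delta_0|_p$ from a nonzero difference, contradicting the ball constraint. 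One small imprecision: to bound the Taylor remainder by $\max(|h|_p,|k|_p)^2$ you need the \emph{divided-power} Taylor coefficients $\partial^{\gamma}P/\gamma!$ to have coefficients in $\Z_p$ (which holds because they are built from binomial coefficients), not merely that the higher formal partial derivatives lie in $\Z_p[X_1,X_2]$; as stated, dividing by factorials could a priori introduce denominators. This is a standard fact and does not affect the validity of the argument, but the justification should be phrased in terms of binomial expansion of $(X+h)^m(Y+k)^n$. The trade-off between the two approaches is the usual one: the citation keeps the paper short and defers to a reference, while your self-contained proof makes the appendix verifiable without consulting Greenberg.
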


\begin{proof}
This is Proposition 5.20 in \cite{Grb}.
\end{proof}

\begin{proof}[Proof of Lemma \rm\ref{plow-thm}]
The first step is to obtain a non-singular $p$-adic solution.
The system \eqref{eq-sys} can be written as a quadratic equation in $s-1$ variables
by inserting the linear equation into the quadratic one.
\begin{align*}
\lambda_s(\lambda_1x_1^2 + \ldots + \lambda_{s-1} x_{s-1}^2) + (\lambda_1x_1 + \ldots + \lambda_{s-1} x_{s-1})^2  = 0
\end{align*}
By a well known theorem of Meyer \cite{Meyer}, 
a quadratic form has a non-trivial (not all $x_i = 0$) $p$-adic solution,
as long as the number of variables is at least five.
We have $s-1 \geq 6$, so we have one variable `left'. We use this variable to ensure that our solution
is not only non-trivial, but also non-singular.
A solution of \eqref{eq-sys} is \emph{non-singular}, if one of the $2\times2$ matrices 
$\big(\begin{smallmatrix} 2\lambda_ix_i & 2\lambda_jx_j \\ \lambda_i & \lambda_j \end{smallmatrix}\big)$
has a non-zero determinant. This is the case if and only if $x_i \neq x_j$ for some $i \neq j$.
To achieve this, we fix $x_2 = 0$ and use Meyer's theorem with the remaining $s-2 \geq 5$ variables.
Since $x_2 = 0$ and $x_i \neq 0$ for some $i$ by non-triviality, the result follows.

Let $\yv \in \Z_p$ be the non-singular solution we have found above.
We switch back to the representation of \eqref{eq-sys} as two equations
and assume for simplicity that $y_1 \neq 0$, which may be achieved by renaming the variables.
Define for $\xv'' = (x_3,\ldots,x_s)$ the binary polynomials 
\begin{align*}
F_{i,\xv''}(x_1,x_2) := \lambda_1x_1^i + \lambda_2x_2^i + \lambda_3x_3^i + \ldots + \lambda_sx_s^i \qquad (i \in \{1,2\}).
\end{align*}
For the point $(a_0,b_0) = (y_1,0)$ we set $\Delta_0 = 2\lambda_1\lambda_2y_1$, as in Lemma \ref{Hensel-thm}.
For vectors $\xv''$ with $|x_j -y_j|_p < |\Delta_0|_p^2$ for $3 \leq j \leq s$, we have
\begin{align*}
|F_{i,\xv''}(y_1,0)|_p = |F_{i,\xv''}(y_1,0) - F_{i,\yv''}(y_1,0)|_p < |\Delta_0|_p^2
\end{align*}
by an elementary calculation, and, therefore, the condition of Lemma \ref{Hensel-thm} is satisfied.
This means that, as long as $\xv''$ is close enough to $\yv''$, we can extend $\xv''$ in a unique way
to a solution $\xv$.

Looking at this from the congruence point of view (by using the definition of the $p$-adic valuation), 
we have to ensure that $\xv''$ is congruent to $\yv''$
modulo $p^u$, where $u$ is defined by $|\Delta_0|_p^2 = p^{-u+1}$.
The number of choices for $\xv''$ modulo $p^k$, which are congruent to $\yv''$ modulo $p^u$ for $k \geq u$, is $(p^{k-u})^{s-2}$.
For each such choice $\xv''$ we have by Lemma \ref{Hensel-thm} a unique $(x_1,x_2) \in \Z_p$ with $F_{i,\xv''}(x_1,x_2) = 0$.
This leads to a unique solution modulo $p^k$ and the lemma follows.
\end{proof}

Now we get to the second main task, the lower bound for $J$.

\begin{lem} \label{J-est}
Under the usual assumptions for system \eqref{eq-sys}, we have
$J \gg N^{s-3}$,
where the implied constant depends on the coefficients $\lambda_i$.
\end{lem}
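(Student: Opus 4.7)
The plan is to rescale $J$ to strip off the $N$-dependence and then identify the resulting fixed integral as a genuine density of real solutions, which can be bounded below by exhibiting a non-singular real solution using condition (iii).

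First I would perform the substitution $\alpha_1 = \beta_1/N$, $\alpha_2 = \beta_2/N^2$ together with $t = Ns$ in the inner integral defining $v$. This yields $v(\lambda_i \alv) = N\, \tilde v(\lambda_i \bev)$, where $\tilde v(\bev) = \int_0^1 e(\beta_2 s^2 + \beta_1 s)\,ds$, and hence
\begin{align*}
J = N^{s-3} J_0, \qquad J_0 := \int_{\R^2} \prod_{i=1}^s \tilde v(\lambda_i\bev)\,d\bev.
\end{align*}
So it suffices to show $J_0 > 0$, with $J_0$ independent of $N$. (One should first note that $J_0$ converges absolutely for $s\geq 7$; this follows from the decay bound $\tilde v(\bev) \ll (1+|\bev|)^{-1/2}$ analogous to Lemma \ref{v-est}.)

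Next I interpret $J_0$ as a singular integral. Writing $\tilde v(\lambda_i \bev) = \int_0^1 e(\beta_2\lambda_i t_i^2 + \beta_1 \lambda_i t_i)\,dt_i$ and formally exchanging the order of integration, $J_0$ is the ``density'' of real solutions of \eqref{eq-sys} in $[0,1]^s$. To make this rigorous, introduce the smoothed indicator $k_\eta(\alpha) := \eta^{-1}\tri(\alpha/\eta)$, whose Fourier transform $\widehat{k_\eta}$ is a nonnegative Fej\'er-type kernel with $\widehat{k_\eta}(0)=1$. Then
\begin{align*}
J_0 \;\geq\; \lim_{\eta\to 0^+} \int_{[0,1]^s} \widehat{k_\eta}\!\left(\textstyle\sum_i \lambda_i t_i\right)\widehat{k_\eta}\!\left(\textstyle\sum_i \lambda_i t_i^2\right)\,d\tv,
\end{align*}
after expanding each $\widehat{k_\eta}$ as a Fourier integral and using dominated convergence (justified by the absolute convergence just mentioned). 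Thus it is enough to produce a constant $c>0$ and a ball of configurations $\tv\in(0,1)^s$ on which both linear and quadratic forms are $O(\eta)$, with total measure $\gg \eta^2$.

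The core of the argument is to exhibit a non-singular real solution inside $(0,1)^s$. Using condition (i), substitute $t_i = 1/2 + \tau y_i$ for small $\tau>0$; translation invariance reduces the system to
\begin{align*}
\textstyle\sum_i \lambda_i y_i = 0, \qquad \sum_i \lambda_i y_i^2 = 0.
\end{align*}
By condition (iii), reorder so that $\lambda_1,\lambda_2>0>\lambda_3,\lambda_4$. Setting $y_5=\cdots=y_s=0$ leaves an indefinite diagonal quadratic in four variables constrained by one linear equation, which has a nontrivial real solution $\yv^\ast$ with $y_1^\ast \neq y_2^\ast$ (easy direct construction: fix $y_1^\ast=1$, $y_3^\ast$, $y_4^\ast$ freely with $\lambda_3 (y_3^\ast)^2+\lambda_4(y_4^\ast)^2 < -\lambda_1$, then solve linearly for $y_2^\ast$ and use intermediate value for sign matching). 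Choosing $\tau$ small enough puts the resulting $\tv^\ast$ in $(0,1)^s$ while preserving $t_1^\ast\neq t_2^\ast$, so the Jacobian
\begin{align*}
\det\begin{pmatrix} \lambda_1 & \lambda_2 \\ 2\lambda_1 t_1^\ast & 2\lambda_2 t_2^\ast\end{pmatrix} = 2\lambda_1\lambda_2(t_2^\ast - t_1^\ast) \neq 0.
\end{align*}
Finally, by the implicit function theorem applied to the map $(t_1,t_2)\mapsto(F_1(\tv),F_2(\tv))$ around $\tv^\ast$, there is a neighbourhood $U\subset(0,1)^{s-2}$ of $(t_3^\ast,\ldots,t_s^\ast)$ of fixed positive measure on which the system has a smooth solution $(t_1(\tv''),t_2(\tv''))$; within an $O(\eta)$-tube around this manifold both forms are $O(\eta)$, giving measure $\gg \eta^2$ and hence $J_0>0$.

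The main obstacle is the construction of the non-singular real solution lying strictly inside $(0,1)^s$: translation invariance lets us center freely, but one must simultaneously ensure an indefinite quadratic has a nontrivial zero with two unequal coordinates \emph{and} all coordinates in a bounded box, and this is precisely where condition (iii) is used in an essential (rather than cosmetic) way. Everything else is a routine application of the implicit function theorem and standard convergence estimates for the singular integral.
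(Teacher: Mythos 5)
Your proposal is correct and follows essentially the same route as the paper: rescale to pull out $N^{s-3}$, represent the singular integral with a Fej\'er-type kernel and reduce to a lower bound on the measure of near-solutions in $(0,1)^s$, construct a non-singular real solution from conditions (i) and (iii), and finish with the implicit function theorem. The only deviations are cosmetic: the paper produces the non-singular solution by an explicit two-parameter perturbation of the all-ones point (reducing to $\lambda_1\lambda_2(\lambda_1+\lambda_2)\theta^2 = \mu_1\mu_2(\mu_1+\mu_2)\varphi^2$) rather than your intermediate-value construction, and your kernel is misnormalised --- for the $\gg \eta^2$ measure bound to yield a positive constant you need $\widehat{k_\eta}$ of height $\asymp \eta^{-1}$ on a window of width $\asymp \eta$, i.e.\ $k_\eta(\alpha)=\tri(\eta\alpha)$ rather than $\eta^{-1}\tri(\alpha/\eta)$, matching the paper's factor $P\,\widehat{\tri}(-P K_i(\yv))$.
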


\begin{proof}
Similar to the p-adic case, we first show the existence of a non-singular real solution.
This is a solution with $x_i \neq x_j$ for some $i \neq j$, as observed in the proof of Lemma \ref{plow-thm}.
We will perform a pertubation argument and create a non-sigular solution from a singular one.
This is where condition (iii) for system \eqref{eq-sys} comes into play.
We know that there are at least two positive and two negative coefficients $\lambda_i$.
We sort and rename the coefficients and variables according to their signs and obtain
\begin{equation*}
\begin{split} 
\lambda_1 x_1^2 + \ldots + \lambda_t x_t^2 = \mu_1 x_{t+1}^2 + \ldots + \mu_t x_{t+r}^2, \\
\lambda_1 x_1 +  \ldots + \lambda_t x_t = \mu_1 x_{t+r} + \ldots + \mu_t x_{t+r},
\end{split}
\end{equation*}
where $r + t = s$ with $r,t \geq 2$ and $\lambda_i > 0,\mu_j > 0$.
By condition (i) for \eqref{eq-sys}, there is always the trivial solution $\xv = (1,\ldots,1)$.
Now fix all the variables $x_i$ to be $1$ except $x_1,x_2$ and $y_1 = x_{t+1}, y_2 = x_{t+2}$.
Then we are in the situation
\begin{equation} \label{xy-sys}
\begin{split} 
\lambda_1 x_1^2 + \lambda_2 x_2^2 + c = \mu_1 y_1^2 + \mu_2 y_{2}^2, \\
\lambda_1 x_1 + \lambda_2 x_2  + c  = \mu_1 y_1 + \mu_2 y_2,
\end{split}
\end{equation}
for some suitable constant $c \in \Z$.
Put $x_1 = 1 + \lambda_2 \theta, x_2 = 1 - \lambda_1 \theta$ and $y_1 = 1 + \mu_2 \varphi, y_2 = 1 - \mu_1 \varphi$.
System \eqref{xy-sys} is satisfied with $\theta = \varphi = 0$ and the linear equation for all choices of $\theta$ and $\varphi$.
This simplifies \eqref{xy-sys} to
\begin{align*}
\lambda_1\lambda_2(\lambda_1 + \lambda_2)\theta^2 = \mu_1\mu_2(\mu_1 + \mu_2)\varphi^2.
\end{align*}
Hence there is a solution with $\theta > 0 , \varphi > 0$, which is non-singular.

Now we prove the lower bound on $J$ by connecting it to the manifold of real solutions around
a non-singular point.
By \eqref{eq-J}, the dominated convergence theorem and \eqref{eq-tri}, we have
\begin{align*}
J = \lim_{P \to \infty} \int_{\R^2}  \tri(\alpha_2N^2/P) \tri(\alpha_1N/P)\prod_{i = 1}^s v(\lambda_i \alv) \, d\alv.
\end{align*}
Inserting the definition \eqref{eq-v} of $v$ and interchanging the order of integration
($\Lambda$ has finite support), leads to
\begin{align*}
J = \lim_{P \to \infty} \int_{[0,N]^s} \int_{\R^2}  
\tri(\alpha_2N^2/P) \tri(\alpha_1N/P) e(\alpha_2 K_2(\xv) + \alpha_1 K_1(\xv)) \,d\alv d\xv,
\end{align*}
where $K_i(\xv) = \sum_{l = 1}^s \lambda_lx_l^i$ (with coefficients $\lambda_l$ as in \eqref{eq-sys}).
The change of variables $x_i = Ny_i$ and $\beta_i = N^i\alpha_i$ separates the parameter $N$ and we obtain 
\begin{align*}
J = N^{s-3} \lim_{P \to \infty} \int_{[0,1]^s} \int_{\R^2}  \tri(\beta_2/P) \tri(\beta_1/P) e(\beta_2 K_2(\yv) + \beta_1 K_1(\yv)) \,d\bev d\yv.
\end{align*}
The two inner integrals are Fourier transforms and we get
\begin{align*}
J = N^{s-3} \lim_{P \to \infty} \int_{[0,1]^s} P \widehat{\tri}(-P K_2(\yv)) P\widehat{\tri}(-P K_1(\yv)) \, d\yv.
\end{align*}
The Fourier transform $\widehat{\tri}$ is given by $\frac{\sin(\pi y)^2}{(\pi y)^2}$ and is positive.
Therefore, we can estimate this integral from below by estimating $\frac{\sin(\pi y)^2}{(\pi y)^2}$ from below by $1/2$
when $|y| \leq 1/10$, for example.
We obtain the lower bound
\begin{align} \label{set}
4^{-1} P^2 \mu(\{\yv \in [0,1]^s: |K_2(\yv)| \leq (10P)^{-1}, |K_1(\yv)| \leq (10P^{-1}\}),
\end{align}
where $\mu$ is the Lebesgue measure.
By the argument above, the quadric has a non-singular real solution $\xv$ (let $x_1 \neq x_2$). By translation and
dilation invariance of the system \eqref{eq-sys}, we can assume that $\xv$ lies in $(0,1)^s$.\\
We use the implicit function theorem. This allows us to write 
$y_1 = \phi_1(\yv'')$ and $y_2 = \phi_2(\yv'')$ where $\yv'' = (y_3,\ldots,y_s)$ and $\phi_1, \phi_2$ are
differentiable functions, such that $K_i(\phi_1(\yv''), \phi_2(\yv''),\yv'') = 0$ ($i \in \{1,2\}$)
for all $\yv''$ in a neighbourhood of $\xv'' = (x_3,\ldots,x_s)$. 
If we fix one of these points $\yv''$, then we can vary the $y_1$ and $y_2$-coordinates by at least
$|\epsilon_i| = (40P|\lambda_i|)^{-1}$ ($i \in \{1,2\}$) without leaving the set in \eqref{set}. 
Therefore, its volume is $\gg P^{-2}$. 
\end{proof}

\appendix

\section{Collection of various estimates for exponential sums}   \label{AppA}

\begin{lem} \label{lin-est}
For the linear exponential sum $L_M$ in \eqref{lin-exp} with $M \geq 2$ we have
\begin{align*}
\int_{\T} |L_{M}(\alpha)| \,d\alpha \ll \log M.
\end{align*}
\end{lem}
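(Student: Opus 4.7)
My plan is to exploit the fact that $L_M(\alpha)$ is a geometric sum, so it has a closed form. Evaluating it as
\begin{align*}
L_M(\alpha) = \sum_{n \leq M} e(\alpha n) = e(\alpha)\frac{e(M\alpha)-1}{e(\alpha)-1},
\end{align*}
one obtains the standard pointwise bound $|L_M(\alpha)| \leq \min\{M, (2\|\alpha\|)^{-1}\}$, which is immediate from $|\sin(\pi \alpha)| \geq 2\|\alpha\|$ on $\T = [-1/2,1/2]$.

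With this pointwise estimate in hand, I would identify $\T$ with the interval $[-1/2,1/2]$ and split the integral at the threshold $|\alpha| = 1/M$. On the inner piece $|\alpha| \leq 1/M$ I use the trivial bound $|L_M(\alpha)| \leq M$, producing a contribution of size $O(1)$. On the outer piece $1/M \leq |\alpha| \leq 1/2$ I use $|L_M(\alpha)| \leq (2|\alpha|)^{-1}$, which gives
\begin{align*}
\int_{1/M \leq |\alpha| \leq 1/2} \frac{1}{2|\alpha|} \, d\alpha = \int_{1/M}^{1/2} \frac{d\alpha}{\alpha} = \log(M/2) \ll \log M.
\end{align*}
Adding the two pieces yields the stated bound.

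There is no real obstacle here: the only thing to be slightly careful about is the denominator $e(\alpha)-1$ near $\alpha=0$, which is handled by the elementary inequality $|e(\alpha)-1| = 2|\sin(\pi\alpha)| \geq 4\|\alpha\|$ valid on $[-1/2,1/2]$. Everything else is a routine split-range integration.
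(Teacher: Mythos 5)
Your proof is correct and follows essentially the same route as the paper: the geometric-sum bound $|L_M(\alpha)| \leq \min\{M, \|\alpha\|^{-1}\}$ (you even track the sharper constant $(2\|\alpha\|)^{-1}$) followed by splitting the integral at $\|\alpha\| = 1/M$. Nothing to add.
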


\begin{proof}
The standard estimate $|L_{M}(\alpha)| \leq \min\{M, \|\alpha\|^{-1}\}$ 
follows by an application of the finite geometric sum formula.
Now decompose the integral into two parts $\|\alpha\| \leq 1/M$ and $\|\alpha\| > 1/M$. The result is immediate. 
\end{proof}

The next three lemmata are specialised versions of results from \cite{Vau}.
First we have an approximation to $V(\alv)$ by the local versions \eqref{eq-v} on the major arcs.

\begin{lem} \label{Vau-thm} 
Let $\alpha_i = b_i/q_i + \beta_i$ ($i=1,2$) with $b_i,q_i \in \Z$ and suppose that 
$q = \lcm(q_1,q_2)$ and $a_i = b_iqq_i^{-1}$. Then
\begin{align*}
V(\alv) = q^{-1} V(q,\av) v(\bev) + \Delta,
\end{align*}
where
\begin{align*}
\Delta \ll q (1 + |\beta_1|N + |\beta_2|N^2).
\end{align*}
\end{lem}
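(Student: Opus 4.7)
The plan is a standard Riemann-sum approximation on each residue class modulo $q$. Since $a_i/q = b_i/q_i$ by the definition of $a_i$, we have $\alpha_i = a_i/q + \beta_i$, so
\begin{align*}
V(\alv) = \sum_{n \leq N} e_q(a_2 n^2 + a_1 n)\, e(\beta_2 n^2 + \beta_1 n).
\end{align*}
Writing $n = r + qk$ with $1 \leq r \leq q$, the residue $r$ determines $e_q(a_2 n^2 + a_1 n) = e_q(a_2 r^2 + a_1 r)$, and therefore
\begin{align*}
V(\alv) = \sum_{r=1}^{q} e_q(a_2 r^2 + a_1 r)\, S_r(\bev), \qquad
S_r(\bev) = \sum_{\substack{1 \leq n \leq N \\ n \equiv r \pmod{q}}} f(n),
\end{align*}
where $f(t) := e(\beta_2 t^2 + \beta_1 t)$.

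Next I would compare $S_r(\bev)$ with $q^{-1} v(\bev) = q^{-1}\int_0^N f(t)\,dt$ by a Riemann-sum argument on blocks of length $q$. On each interval $[qk, q(k+1)]$ the estimate
\begin{align*}
\bigl| q\,f(r+qk) - \int_{qk}^{q(k+1)} f(r+t)\,dt \bigr| \leq \int_{qk}^{q(k+1)} |t-qk|\,\sup|f'|\,dt \leq q^2 \sup|f'|
\end{align*}
holds. Summing over the $O(N/q)$ relevant values of $k$ gives an aggregate Riemann-sum error of $O(Nq \sup_{[0,N]} |f'|)$. Since $|f'(t)| \ll |\beta_1| + |\beta_2|t$, we have $\sup_{[0,N]}|f'| \ll |\beta_1| + |\beta_2|N$, so
\begin{align*}
q\,S_r(\bev) = \int_{r}^{r+qK} f(u)\,du + O\!\bigl(Nq(|\beta_1| + |\beta_2|N)\bigr),
\end{align*}
where $qK$ is the largest multiple of $q$ at most $N - r$. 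The boundary difference between $\int_r^{r+qK}$ and $\int_0^N = v(\bev)$ contributes only $O(q)$, because $|f| \leq 1$ and the missing intervals have total length $O(q)$. Dividing by $q$ gives $S_r(\bev) = q^{-1} v(\bev) + O(1 + |\beta_1|N + |\beta_2|N^2)$.

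Finally, substituting back and using the trivial bound $\sum_{r=1}^{q} 1 = q$ on the error yields
\begin{align*}
V(\alv) = q^{-1} V(q,\av)\, v(\bev) + O\!\bigl(q(1 + |\beta_1|N + |\beta_2|N^2)\bigr),
\end{align*}
which is the claimed estimate. The only genuinely analytical step is the derivative bound combined with the correct book-keeping of the $O(N/q)$ blocks and the two endpoint corrections; everything else is algebraic rearrangement. The step I expect to require the most care is ensuring that the endpoint error from replacing the truncated integral by $v(\bev)$ really is only $O(q)$ and not larger, since one needs $|f| \leq 1$ pointwise (which is automatic as $f$ is a unit modulus exponential) rather than any control on $f'$.
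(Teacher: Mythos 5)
Your argument is correct. Note, however, that the paper does not prove this lemma at all: it simply cites it as the special case $k=2$ of Theorem 7.2 in Vaughan's \emph{The Hardy--Littlewood Method}. What you have written is essentially the textbook proof of that cited result: split $\{1,\dots,N\}$ into residue classes modulo $q$ (legitimate because $a_i = b_i q/q_i \in \Z$ and $a_i/q = b_i/q_i$, so the arithmetic phase $e_q(a_2n^2+a_1n)$ is constant on each class), then compare each class sum against $q^{-1}v(\bev)$ by a left-endpoint Riemann-sum estimate using $\sup_{[0,N]}|f'| \ll |\beta_1| + |\beta_2|N$ over the $O(N/q)$ blocks, plus an $O(q)$ endpoint correction from $|f|=1$. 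The bookkeeping checks out: the per-class error $O(1+|\beta_1|N+|\beta_2|N^2)$ multiplied by the $q$ unimodular factors $e_q(a_2r^2+a_1r)$ gives exactly the stated $\Delta \ll q(1+|\beta_1|N+|\beta_2|N^2)$. So your proposal adds a self-contained proof where the paper defers to the literature, at no loss of correctness.
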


\begin{proof}
This is the special case $k=2$ of \cite[Theorem 7.2]{Vau}.
\end{proof}

Now we give an estimate for the local function $v(\alv)$.

\begin{lem} \label{v-est} 
We have the estimate
\begin{align*}
|v(\alv)| \ll N (1+|\alpha_1|N + |\alpha_2|N^2)^{-1/2}.
\end{align*}
\end{lem}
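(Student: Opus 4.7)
The plan is to prove the bound by a case analysis, using the trivial estimate together with two standard oscillatory-integral techniques: completing the square (reducing to a Fresnel-type integral) and integration by parts (the first-derivative bound). The claim is trivially true when $|\alpha_1|N + |\alpha_2|N^2 \leq 1$, since then $(1 + |\alpha_1|N + |\alpha_2|N^2)^{-1/2} \gg 1$ and the trivial bound $|v(\alv)| \leq N$ suffices. So I may assume one of $|\alpha_1|N$, $|\alpha_2|N^2$ is at least $1$.

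First I would handle the regime where $|\alpha_2|N^2 \geq |\alpha_1|N$ (so that it suffices to prove $|v(\alv)| \ll N(|\alpha_2|N^2)^{-1/2} = |\alpha_2|^{-1/2}$). Completing the square in the phase $\alpha_2 t^2 + \alpha_1 t = \alpha_2(t + \alpha_1/(2\alpha_2))^2 - \alpha_1^2/(4\alpha_2)$ and substituting $w = (t + \alpha_1/(2\alpha_2))\sqrt{2|\alpha_2|}$ transforms $v(\alv)$ into a multiple of $(2|\alpha_2|)^{-1/2} \int_A^B e(\pm w^2/2)\,dw$ for certain endpoints $A, B$. The Fresnel integral $\int_A^B e(\pm w^2/2)\,dw$ is uniformly bounded (this follows from standard oscillatory-integral arguments — the second derivative of $w^2/2$ is constant, so van der Corput's lemma applies), yielding $|v(\alv)| \ll |\alpha_2|^{-1/2}$ exactly as required.

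Next I would handle the complementary regime $|\alpha_1|N \geq |\alpha_2|N^2$, i.e.\ $|\alpha_2|N \leq |\alpha_1|$, in which I need $|v(\alv)| \ll N^{1/2}|\alpha_1|^{-1/2}$. I split further: if $2|\alpha_2|N \leq |\alpha_1|/2$, then on $[0,N]$ the derivative $\phi'(t) = 2\alpha_2 t + \alpha_1$ has absolute value at least $|\alpha_1|/2$, so integration by parts (writing $e(\phi) = (2\pi i \phi')^{-1} \frac{d}{dt}e(\phi)$ and bounding the boundary and derivative-of-$1/\phi'$ terms) gives $|v(\alv)| \ll |\alpha_1|^{-1}$, which is stronger than needed once $|\alpha_1|N \geq 1$. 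In the remaining sub-case $|\alpha_2|N > |\alpha_1|/4$, the quantities $|\alpha_1|$ and $|\alpha_2|N$ are comparable, so the Fresnel bound from the previous paragraph gives $|v(\alv)| \ll |\alpha_2|^{-1/2} \ll N^{1/2}|\alpha_1|^{-1/2}$.

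The main technical nuisance is the case where the stationary point $t_0 = -\alpha_1/(2\alpha_2)$ falls inside $[0,N]$: then the first-derivative bound fails, but this is precisely the regime $|\alpha_2|N \gtrsim |\alpha_1|$, where the Fresnel/second-derivative bound $|v(\alv)| \ll |\alpha_2|^{-1/2}$ is sharp and gives what is needed. Alternatively, the whole lemma is a very minor specialisation of standard results in Vaughan \cite{Vau} (e.g.\ Theorem 7.3 combined with the discussion of $v$ there), and could simply be quoted; the case analysis above merely makes the reduction explicit.
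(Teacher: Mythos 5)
Your argument is correct, but it is not the route the paper takes: the paper's entire proof of this lemma is a one-line citation, ``This is the special case $k=2$ of \cite[Theorem 7.3]{Vau}'' (Vaughan's standard bound for the auxiliary integral $v$ in the Hardy--Littlewood method), which is essentially the alternative you mention in your closing sentence. What you supply instead is a self-contained stationary-phase proof, and the case analysis checks out: the trivial bound when $|\alpha_1|N+|\alpha_2|N^2\leq 1$; the second-derivative (Fresnel) bound $|v(\alv)|\ll|\alpha_2|^{-1/2}$ after completing the square, which dominates when $|\alpha_2|N^2$ is the largest term; and in the regime $|\alpha_2|N\leq|\alpha_1|$ the further split into the non-stationary sub-case (where $|\phi'|\geq|\alpha_1|/2$ on $[0,N]$ and $\phi'$ is monotone, so the first-derivative test gives $|v(\alv)|\ll|\alpha_1|^{-1}\ll N^{1/2}|\alpha_1|^{-1/2}$ once $|\alpha_1|N\gg 1$) and the sub-case $|\alpha_2|N\gg|\alpha_1|$ where the Fresnel bound again suffices. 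The constants all work, and the degenerate possibilities ($\alpha_2=0$ in the Fresnel step) are excluded by the case conditions. What your version buys is independence from Vaughan's book and an explicit view of where the exponent $-1/2$ comes from (the stationary point of the quadratic phase); what the paper's citation buys is brevity and a statement already in exactly the form needed, uniformly in both variables. Either is acceptable here.
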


\begin{proof}
This is the special case $k=2$ of \cite[Theorem 7.3]{Vau}.
\end{proof}

The next result is an easy corollary of the previous lemma.

\begin{lem} \label{v-est2}
For $p > 4$ and $\T^2 = [-1/2,1/2]^2$ we have
\begin{align*}
\int_{\T^2} \psi^2(\alv)\, |v(\alv)|^p \,d\alv \ll_p N^{p-3},
\end{align*}
where $\psi(\alv)$ is defined in \eqref{eq-ind}.
\end{lem}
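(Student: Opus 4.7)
The plan is to combine the pointwise bound from Lemma \ref{v-est} with a simple change of variables. Since $\psi$ is uniformly bounded (by an absolute constant, since $\psi_P(\alpha) \leq 2\tri(\alpha/(2P)) \leq 2$), we have $\psi^2(\alv) \ll 1$, so the cutoff only serves to restrict attention to a finite piece of $\R^2$ (and can in fact be dropped once we pass to an integral over $\R^2$). Raising the bound in Lemma \ref{v-est} to the $p$-th power gives
\[
|v(\alv)|^p \ll_p N^p (1 + |\alpha_1|N + |\alpha_2|N^2)^{-p/2}.
\]

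Next I would perform the change of variables $u = \alpha_1 N$, $w = \alpha_2 N^2$, whose Jacobian is $N^3$. This transforms the estimate into
\[
\int_{\T^2} \psi^2(\alv) |v(\alv)|^p \, d\alv \ll_p N^p \cdot N^{-3} \int_{\R^2} (1+|u|+|w|)^{-p/2} \, du\, dw,
\]
where I have extended the domain from a box to all of $\R^2$ (this only increases the integral, and the estimand is now independent of $N$).

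The final step is the one-variable fact that $\int_{\R^2} (1+|u|+|w|)^{-p/2}\,du\,dw$ converges precisely when $p/2 > 2$, which is the hypothesis $p > 4$. Explicitly, integrating first in $u$ over $[0,\infty)$ yields $\frac{2}{p-2}(1+w)^{-(p-2)/2}$, and then integrating over $[0,\infty)$ in $w$ yields the finite constant $\frac{4}{(p-2)(p-4)}$, with analogous contributions from the other quadrants. Multiplying out gives the claimed bound $N^{p-3}$.

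There is no real obstacle: the lemma is essentially immediate from Lemma \ref{v-est}, and the only delicate point is that the exponent $p/2$ must exceed the dimension $2$ to ensure integrability at infinity — this is exactly the hypothesis $p > 4$, and without it the integral on $\R^2$ would diverge (the cutoff $\psi$ would then become essential and would contribute extra powers of $Q$).
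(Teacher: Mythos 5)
Your proof is correct and follows essentially the same route as the paper: both arguments simply insert the pointwise bound of Lemma \ref{v-est}, drop $\psi^2 \ll 1$, and verify that $\int (1+|\alpha_1|N+|\alpha_2|N^2)^{-p/2}\,d\alv \ll_p N^{-3}$ under the hypothesis $p>4$. The only (immaterial) difference is that the paper checks convergence via the factorization $(1+a+b)^{-2} \ll (1+a)^{-1}(1+b)^{-1}$, reducing to two one-dimensional integrals with exponent $p/4>1$, whereas you rescale and compute the two-dimensional iterated integral directly.
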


\begin{proof}
Use $(1+|\alpha_1|N + |\alpha_2|N^2)^{-2} \ll (1+|\alpha_1|N)^{-1}(1+|\alpha_2|N^2)^{-1}$ and $|\psi(\alv)| \leq 1$.
\end{proof}

The following result gives a well-known estimate for Gau\ss\ sums.
It allows us to bound $V(q,\av)$ and the quadratic 
exponential sum $G_{m_1}(q,a)$ appearing in Lemma \ref{transform}.

\begin{lem} \label{exp-est}
Let $F_2(\xv) = \xv^t M \xv$ be a quadratic form with $M \in \Z^{d \times d}$ a 
symmetric matrix of determinant $\Delta \neq 0$ and $F_{1}(\xv)$ an arbitrary affine linear polynomial. 
Let 
\begin{align*}
G(q,a) = \sum_{\xv \leq q} e_q(aF_2(\xv) + F_{1}(\xv)).
\end{align*}
Then for $(a;q) = 1$ we have
\begin{align*}
|G(q,a)| \ll_{d,\Delta} q^{d/2}
\end{align*}
uniformly in $F_1$. In particular, this implies
\begin{align*}
|V(q,\av)| \ll q^{1/2}.
\end{align*}
\end{lem}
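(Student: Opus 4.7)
The plan is to prove this via the standard Weyl-differencing (squaring) trick, which reduces the quadratic exponential sum to a counting problem.

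First I would square the modulus: writing $G(q,a) \overline{G(q,a)}$ as a double sum over $\xv, \yv \in \{1,\dots,q\}^d$ and substituting $\hv = \xv - \yv$ (mod $q$), I get
\begin{align*}
|G(q,a)|^2 = \sum_{\hv \leq q} e_q\bigl(-a\hv^t M \hv + \bv^t \hv\bigr) \sum_{\xv \leq q} e_q\bigl(2a\,\xv^t M \hv\bigr),
\end{align*}
where $\bv$ denotes the linear part of $F_1$ and I have used that $M$ is symmetric so that $F_2(\xv) - F_2(\xv - \hv) = 2 \xv^t M \hv - \hv^t M \hv$. The inner sum factors over the coordinates of $\xv$ and is equal to $q^d$ when $q$ divides every component of $2aM\hv$, and vanishes otherwise.

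Next I would use that $(a;q)=1$ to drop the $a$ from the condition, and then multiply by the adjugate $M^{*}$ (using $M M^{*} = \Delta\, I$) to deduce $q \mid 2\Delta h_i$ for every $i$. The number of $\hv \leq q$ satisfying this is at most $\gcd(q,2\Delta)^d \leq (2|\Delta|)^d$. Since the remaining exponential is trivially bounded by $1$, this gives
\begin{align*}
|G(q,a)|^2 \leq q^d (2|\Delta|)^d,
\end{align*}
which is the desired bound $|G(q,a)| \ll_{d,\Delta} q^{d/2}$, uniformly in the linear part $F_1$.

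For the consequence $|V(q,\av)| \ll q^{1/2}$ I would take $d=1$, $F_2(r) = r^2$, $F_1(r) = a_1 r$ and apply with $a = a_2$; here $\Delta = 1$, so the implied constant is absolute. The only subtlety is that the hypothesis gives $(\av;q) = 1$ rather than $(a_2;q)=1$. To handle this, set $g = \gcd(a_2,q)$ and write $r = s + tq/g$ with $s \leq q/g$ and $0 \leq t < g$; the quadratic part depends only on $s$ modulo $q/g$, so the inner sum over $t$ becomes $\sum_{t=0}^{g-1} e_g(a_1 t)$, which vanishes unless $g \mid a_1$. But $g \mid a_1$ combined with $g \mid a_2$ and $g \mid q$ contradicts $(\av;q)=1$ unless $g=1$. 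Hence either $V(q,\av)=0$, or $(a_2;q)=1$ and the general bound applies.

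The main (only) obstacle is the bookkeeping in the adjugate step and the reduction from $(\av;q)=1$ to $(a_2;q)=1$; both are elementary once one sees the factorisation, so no serious difficulty arises.
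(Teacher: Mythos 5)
Your proposal is correct and follows essentially the same route as the paper: square the sum, shift variables so the inner sum over $\xv$ produces the indicator of $q \mid 2aM\hv$, and use $(a;q)=1$ together with the adjugate/determinant to count the admissible $\hv$ by $O_{d,\Delta}(1)$. Your extra care in reducing $(\av;q)=1$ to $(a_2;q)=1$ for the $V(q,\av)$ consequence (showing the sum vanishes when $\gcd(a_2,q)>1$) is a detail the paper leaves implicit, and it is handled correctly.
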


\begin{proof}
Let $F'_{1}$ be the linear part of $F_{1}$.
By squaring $G(q,a)$, a change of variables and the $q$-periodicity of $e_q$ we obtain
\begin{align*}
|G(q,a)|^2 &= \sum_{\xv \leq q} \sum_{\yv \leq q} e_q(a (F_2(\xv)-F_2(\yv)) +  (F_{1}(\xv)-F_{1}(\yv))) \\
&= \sum_{\hv \leq q} \sum_{\yv \leq q} e_q(a (F_2(\yv+\hv)-F_2(\yv))) + (F_{1}(\yv + \hv)-F_{1}(\yv)) ) \\
&= \sum_{\hv \leq q} e_q(a F_2(\hv) + F'_{1}(\hv)) \sum_{\yv \leq q} e_q(a 2 \hv^t M \yv ) \\
&= q^d \sum_{\hv \leq q} e_q(a F_2(\hv) + F'_{1}(\hv)) \einsv_{q| 2 a M \hv}
\leq q^d \sum_{\hv \leq q} \einsv_{q| 2 a M \hv}.
\end{align*}
Since $(a;q) = 1$ the number of vectors $2aM\hv$ divisible by $q$ componentwise is bounded by $2^d \Delta$ 
and the estimate follows.
\end{proof}

We need a good estimate for $V(\alv)$ on the minor arcs as well.

\begin{lem} \label{ma-est} 
Assume that $|V(\alv)| \geq NQ^{-1/3}$ for $N \geq N_0(\epsilon)$ with $N^{3\epsilon} \leq Q$.
Then there exists $q \leq Q$ and $a_1,a_2 \in \Z$, such that $(q;a_1;a_2) = 1$ and
$|\alpha_iq - a_i| \leq QN^{-i}$ for $i \in \{1,2\}$.
\end{lem}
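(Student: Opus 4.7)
The plan is to apply Weyl differencing to the quadratic sum to extract a rational approximation for $\alpha_2$, and then to upgrade this to a joint approximation for $\alpha_1$ by feeding the result into the major-arc expansion provided by Lemma \ref{Vau-thm}.

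For the Weyl step, expanding $|V(\alv)|^2$ via the substitution $n = m + h$ gives
\begin{align*}
|V(\alv)|^2 \ll N + \sum_{1 \leq h < N} \min\{N, \|2\alpha_2 h\|^{-1}\},
\end{align*}
since the $h$-only phase $e(\alpha_2 h^2 + \alpha_1 h)$ pulls out with modulus one. The hypothesis $|V(\alv)| \geq NQ^{-1/3}$ therefore forces $\sum_{h \leq N}\min\{N,\|2\alpha_2 h\|^{-1}\} \gg N^2 Q^{-2/3}$. Combined with the classical counting estimate $\sum_{h \leq N} \min\{N, \|h\beta\|^{-1}\} \ll (1 + N/q')(N + q' \log q')$ (valid for $|\beta - a'/q'| \leq q'^{-2}$, obtained from Dirichlet's theorem), the assumption $N^{3\epsilon} \leq Q$ forces the term $N^2/q'$ on the right to dominate, whence $q' \ll Q^{2/3}$. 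Reducing the fraction $a'/(2q')$ then yields $a_2/q_2$ with $q_2 \leq Q$, $(a_2, q_2) = 1$, and $|\alpha_2 - a_2/q_2| \leq Q/(q_2 N^2)$.

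With $q_2$ in hand I would choose $a_1 \in \{0, \ldots, q_2 - 1\}$ minimising $|\beta_1| := |\alpha_1 - a_1/q_2|$ (so that $|\beta_1| \leq 1/(2q_2)$) and invoke Lemma \ref{Vau-thm}, which gives $V(\alv) = q_2^{-1} V(q_2, \av) v(\bev) + \Delta$ with $\bev = \alv - \av/q_2$. The arithmetic factor is bounded by $q_2^{-1/2}$ via Lemma \ref{exp-est}, the local factor by $N(1 + |\beta_1|N)^{-1/2}$ via Lemma \ref{v-est} (using $|\beta_2| N^2 \leq Q/q_2$), and the error term by $\Delta \ll q_2 + q_2 |\beta_1| N + Q$. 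Matching this against $|V(\alv)| \geq NQ^{-1/3}$, and using $Q \ll NQ^{-1/3}$ (valid under the global convention $64 Q^{52} \leq N$), forces the main term to dominate with $q_2(1 + |\beta_1| N) \ll Q^{2/3}$, hence $|\alpha_1 q_2 - a_1| \leq Q^{2/3}/N \leq Q/N$. Dividing the triple $(q_2, a_1, a_2)$ by its common gcd $d$ then yields the required coprimality $(q; a_1; a_2) = 1$, since the bounds $|\alpha_i q - a_i| \leq Q N^{-i}/d \leq Q N^{-i}$ are preserved.

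The main obstacle is verifying that the error $\Delta$ from Lemma \ref{Vau-thm} is genuinely smaller than the target $NQ^{-1/3}$; this is where both the assumed gap $N^{3\epsilon} \leq Q$ and the global size convention come into play, together with the fact that the Weyl step has already forced the strict inequality $q_2 \ll Q^{2/3}$ rather than merely $q_2 \leq Q$, leaving enough slack to absorb all contributions to $\Delta$ below the threshold $NQ^{-1/3}$.
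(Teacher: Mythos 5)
The paper disposes of this lemma in one line, by quoting Theorem 5.1 of Baker's \emph{Diophantine inequalities} with the parameters $M=1$, $P=NQ^{-1/3}$, $k=2$; so you are attempting a from-scratch proof of a quoted result. Your first step (Weyl differencing to approximate $\alpha_2$) is essentially sound, although you should state explicitly that Dirichlet's theorem is applied with denominator bound $N^2/Q$ --- that is what simultaneously yields $|2\alpha_2 q'-a'|\leq QN^{-2}$ and keeps the stray term $q'\log q'$ in the counting estimate below $N^2Q^{-2/3}$ (the latter using $\log N\ll Q^{1/3}$, i.e.\ the hypothesis $N^{3\epsilon}\leq Q$).

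The second step has a genuine gap: the deduction ``matching against $|V(\alv)|\geq NQ^{-1/3}$ forces the main term to dominate, hence $q_2(1+|\beta_1|N)\ll Q^{2/3}$'' is circular. Lemma \ref{Vau-thm} gives only an \emph{upper} bound $|V(\alv)|\leq q_2^{-1/2}N(1+|\beta_1|N)^{-1/2}+O(q_2+q_2|\beta_1|N+Q)$, and when $|\beta_1|$ is large this upper bound is large, so it is perfectly consistent with the hypothesis $|V(\alv)|\geq NQ^{-1/3}$ and yields no contradiction. Concretely, take $q_2=1$ and $|\beta_1|=\|\alpha_1\|$ of order $1/2$: your error bound is then $\Delta\ll N$, which swamps the threshold $NQ^{-1/3}$, and nothing in the inequality forces $|\alpha_1 q_2-a_1|\leq Q/N$. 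To make the ``main term dominates'' logic run you would need to know \emph{in advance} that $q_2|\beta_1|N\ll NQ^{-1/3}$, which is (up to the exponent) exactly the conclusion you are trying to reach. The approximation of the linear coefficient genuinely requires a separate argument --- for instance splitting $n$ into residue classes modulo $q_2$, removing the slowly varying factor $e(\beta_2 n^2)$ by partial summation, and bounding the resulting linear sums $\sum_{n\equiv r\,(q_2)}e(\alpha_1 n)$ by $\min\{N/q_2,\|q_2\alpha_1\|^{-1}\}$ --- and even then the exponents must be tracked carefully to land on $|\alpha_1 q-a_1|\leq QN^{-1}$ rather than a weaker power of $Q$. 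This is precisely the content of Baker's Theorem 5.1, which is why the paper simply cites it.
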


\begin{proof}
This is a special case of Theorem 5.1 in \cite{Baker}.
Choose the parameters in \cite{Baker} to be $M=1$, $P = NQ^{-1/3}$, $k = 2$ and 
$\epsilon > 0$ such that $N^{3\epsilon} \leq Q$.
\end{proof}

\section{From exponential correlation to a density increment} \label{sec-increment}

In this appendix we are going to prove the simple but crucial Lemma \ref{exp-prog}.
It enables us to transfer a correlation estimate for exponential sums into a
density increment on a subprogression.

Let $f$ be the balanced function as defined in \eqref{balanced}
and consider the corresponding exponential sum $V_f$ as defined in \eqref{eq-Sg}.

\begin{lem} \label{exp-prog}
If $|V_f(\alv)| \geq \eta N$ for some $\alv \in \T^2$ and $\eta > 0$, then there is
an arithmetic progression $P \subset \{1,2,\ldots,N\}$ of length $|P| \gg \eta^{2} N^{1/16}$ with
\begin{align*}
|\mathcal{A} \cap P| \geq (\delta + \eta/4) |P|.
\end{align*}
\end{lem}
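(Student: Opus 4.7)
The plan is to convert the hypothesis $|V_f(\alv)| \ge \eta N$ into a density increment via the standard Roth-style partitioning, adapted to the quadratic phase.

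First I would apply simultaneous Dirichlet's theorem on $(\alpha_1,\alpha_2) \in \T^2$ with a parameter $Q$ that is a small power of $N$ (to be tuned). This gives a common denominator $q \le Q$ and integers $a_1,a_2$ with $\|q\alpha_i\| \le Q^{-1/2}$; write $\alpha_i = a_i/q + \beta_i$ with $|\beta_i| \le 1/(qQ^{1/2})$. Next, I would partition $\{1,\ldots,N\}$ into arithmetic progressions $P_j$ of common difference $q$ and length $L$ of order $\eta^{2}N^{1/16}$, on which the quadratic phase $e(\alpha_1 n + \alpha_2 n^2)$ is approximately constant. For $n = n_0 + jq \in P_j$ with $j = 0, \ldots, L-1$, a direct expansion gives, modulo $1$,
\begin{align*}
\alpha_1 n + \alpha_2 n^2 \equiv \alpha_1 n_0 + \alpha_2 n_0^2 + q\beta_1 \, j + 2 n_0 q \beta_2 \, j + q^2 \beta_2 \, j^2,
\end{align*}
the integer contributions $a_1 j + 2 a_2 n_0 j + a_2 q j^2$ being absorbed. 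The oscillation on $P_j$ is therefore at most
\begin{align*}
|q\beta_1| L + 2 n_0 |q\beta_2| L + q|q\beta_2| L^2 \ll N L Q^{-1/2} + q L^2 Q^{-1/2},
\end{align*}
which can be made smaller than $\eta/8$ for a suitable choice of $Q$.

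With this partition in hand, the triangle inequality — replacing the phase on each $P_j$ by its value $c_j$ at a base point — gives
\begin{align*}
\eta N \le |V_f(\alv)| \le \sum_j \Big| \sum_{n \in P_j} f(n) \Big| + \tfrac{\eta N}{8},
\end{align*}
so the signed sums $S_j := \sum_{n \in P_j} f(n) = |\mathcal{A} \cap P_j| - \delta|P_j|$ satisfy $\sum_j |S_j| \ge \tfrac{7}{8}\eta N$. Since $f$ is balanced on $\{1,\ldots,N\}$, we have $\sum_j S_j = 0$, hence $\sum_{S_j>0} S_j = \tfrac{1}{2}\sum_j |S_j| \ge \tfrac{7}{16}\eta N$. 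Since $\sum_j |P_j| \le N$, averaging over the $j$ with $S_j > 0$ produces some $P_j$ with $S_j \ge (\eta/4)|P_j|$, i.e.\ $|\mathcal{A} \cap P_j| \ge (\delta + \eta/4)|P_j|$, and $|P_j| \gg \eta^{2}N^{1/16}$ by construction.

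The main obstacle is controlling the phase oscillation in the partition step. The cross-term $2 n_0 q \beta_2 j$ couples the starting point $n_0 \le N$ of each sub-progression to the $\beta_2$-error, forcing a rather large Dirichlet parameter $Q$ and hence a short progression. Balancing the oscillation bound against the constraint $qL \le N$ (so that each residue class modulo $q$ in $[1,N]$ really contains an interval of $L$ elements) produces the relatively weak $N^{1/16}$ exponent, in contrast to Roth's $N^{1/2}$ in the linear setting.
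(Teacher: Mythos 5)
There is a genuine gap in the partition step, and it is not merely a loss in the exponent: the oscillation bound you need is unattainable. After simultaneous Dirichlet you have $|q\beta_2|\le Q^{-1/2}$ with $q\le Q$, and the cross-term in your expansion contributes an oscillation of size $2n_0|q\beta_2|L$, which for starting points $n_0$ of order $N$ is of order $NLQ^{-1/2}$. Forcing this below $\eta/8$ requires $Q\gg N^2L^2\eta^{-2}$, while fitting a progression of difference $q$ (which Dirichlet only bounds by $q\le Q$, and which for generic $\alv$ will indeed be comparable to $Q$) and length $L$ inside $\{1,\ldots,N\}$ requires $qL\le N$, hence $Q\le N/L$. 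These two demands combine to $L^3\ll \eta^2 N^{-1}$, which is impossible for $L\ge 1$ and $N$ large. So no choice of $Q$ makes your single-partition argument work; the difficulty you flag at the end as the source of the weak exponent $N^{1/16}$ is in fact fatal to the construction as written.

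The way the paper gets around this is structurally different in two respects. First, it does not ask for $\|q\alpha_2\|$ to be small; it invokes Heilbronn's theorem to produce $q\le N^{3/4}$ with $\|\alpha_2 q^2\|\ll N^{-1/4}$, which controls only the \emph{pure quadratic} term $\alpha_2 q^2(k^2-l^2)$ on a progression $\{n_0+kq\}$ of length $K\asymp \eta N^{1/8}$. Second, the entire linear-in-$k$ part, including the problematic cross-term, is collected into $\beta(k-l)$ with $\beta=2\alpha_2 n_0 q+\alpha_1 q$ treated as an \emph{arbitrary} real depending on the progression; it is then killed by a second, nested partition: ordinary one-dimensional Dirichlet gives $r\le K^{1/2}$ with $\|\beta r\|\le K^{-1/2}$, and splitting each progression further into subprogressions of difference $rq$ and length $\asymp\eta K^{1/2}$ makes the linear oscillation small. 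The final length $\gg \eta\cdot(\eta N^{1/8})^{1/2}\gg\eta^2N^{1/16}$ is where the exponent $1/16$ actually comes from. Your concluding averaging step (using $\sum_j S_j=0$ and pigeonhole to find $S_j\ge(\eta/4)|P_j|$) is correct and is essentially the paper's $|x|+x=2\max\{0,x\}$ argument, but it rests on a partition that does not exist; you would also want to dispose of small $N$ (say $N\le(C/\eta^2)^{16}$) separately with a singleton progression, as the paper does.
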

Results of this kind are well known (see \cite{Gow}, for example), 
but in the current literature usually given in the $\Z/N\Z$-setting.
We give here a proof for completeness and
use the following classical result of Heilbronn in diophantine approximation.

\begin{lem} \label{diophant}
Let $\alpha \in \R$ and $Q \in \N$. Then there is $q \leq Q$ such that
$\|\alpha q^2\| \ll_{\epsilon} Q^{-1/2 + \epsilon}$.
\end{lem}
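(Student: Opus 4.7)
The plan is to combine Dirichlet's approximation theorem with Weyl's inequality for quadratic exponential sums, in the spirit of Heilbronn's classical argument. First I would apply Dirichlet at scale $Q$: this yields coprime integers $a$ and $q$ with $1 \leq q \leq Q$ and $|q\alpha - a| \leq 1/Q$. The size of $q$ forces a natural dichotomy based on the threshold $Q^{1/2}$.

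In the easy regime $q \leq Q^{1/2}$, the direct estimate
\begin{align*}
\|q^2 \alpha\| \ = \ \|q(q\alpha - a)\| \ \leq \ q/Q \ \leq \ Q^{-1/2}
\end{align*}
exhibits $q$ itself as the required witness, with no $\epsilon$-loss at all. All the work therefore lies in the hard regime $Q^{1/2} < q \leq Q$, where $q$ is too large to be used directly.

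For the hard regime I would argue by contradiction. Assume that $\|n^2\alpha\| \geq \delta$ for every $n \leq Q$, with $\delta = c Q^{-1/2+\epsilon}$. Approximating the indicator of $\{x \in \T : \|x\| \leq \delta\}$ by a Fejér-type kernel $\chi_\delta$ whose Fourier coefficients obey $|\widehat{\chi_\delta}(k)| \ll \min(\delta, k^{-2}\delta^{-1})$, the assumed vanishing of $\sum_{n \leq Q} \chi_\delta(n^2 \alpha)$ rewrites as the identity
\begin{align*}
\delta Q \ = \ - \sum_{k \neq 0} \widehat{\chi_\delta}(k)\, T(k\alpha), \qquad T(\beta) := \sum_{n \leq Q} e(\beta n^2).
\end{align*}
I would then bound each $|T(k\alpha)|$ via the Weyl-type inequality underlying Lemma \ref{ma-est}, after splitting the $k$-sum according to the divisor $d := \gcd(k,q)$; since the denominator of the best rational approximation to $k\alpha$ is essentially $q/d$, Weyl gives $|T(k\alpha)| \ll Q^{1+\epsilon}(d/q)^{1/2}$. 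Weighting by $|\widehat{\chi_\delta}(k)|$, truncating at $|k| \leq \delta^{-1}$, and summing over $d \mid q$ with the divisor bound $\tau(q) \ll_\epsilon q^\epsilon$ produces a total that, for $c$ small enough, falls below the main term $\delta Q$, contradicting the hypothesis and forcing the existence of $n \leq Q$ with $\|n^2 \alpha\| < \delta$.

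The main obstacle is precisely the control of those frequencies $k$ for which $k\alpha$ lands on a very thin major arc, equivalently those with $d = \gcd(k,q)$ large, since there the pointwise Weyl bound is weakest. Balancing their contribution against the main term forces either an averaged form of Weyl's inequality or a careful iteration of the Dirichlet step, and the logarithmic and divisor-function losses introduced by this bookkeeping are exactly the source of the $\epsilon$ in the final exponent $-1/2+\epsilon$.
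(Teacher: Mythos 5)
The paper itself gives no proof of this lemma; it simply cites Baker \cite[p.~26]{Baker} for Heilbronn's theorem, so your attempt has to stand on its own as a reconstruction of the classical argument. Your easy regime is correct: if $q\le Q^{1/2}$ then $\|q^2\alpha\|\le q\,|q\alpha-a|\le q/Q\le Q^{-1/2}$, and $q$ itself is the witness.

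The hard regime, however, does not close as written, and the failure is quantitative rather than a matter of bookkeeping. Take $\delta=cQ^{-1/2+\epsilon}$, so the main term is $\delta Q\asymp Q^{1/2+\epsilon}$. Grouping the frequencies $1\le k\le\delta^{-1}$ by $d=\gcd(k,q)$ and using your pointwise bound $|T(k\alpha)|\ll Q^{1+\epsilon}(d/q)^{1/2}$ gives
\begin{align*}
\delta\sum_{1\le k\le\delta^{-1}}|T(k\alpha)|\ \ll\ \delta\,Q^{1+\epsilon}\sum_{d\mid q}\frac{\delta^{-1}}{d}\Bigl(\frac{d}{q}\Bigr)^{1/2}\ \ll\ Q^{1+\epsilon}\,q^{-1/2+\epsilon},
\end{align*}
and since $q\le Q$ this upper bound is \emph{at least} as large as the main term $Q^{1/2+\epsilon}$ for every $q$ in your hard range (for $q$ near $Q^{1/2}$ it is larger by about $Q^{1/4}$). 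So no contradiction is ever reached, and the whole range $Q^{1/2}<q<Q^{1-o(1)}$ is simply not covered. Two further problems compound this: the trivial bound on the tail $|k|>\delta^{-1}$ contributes $\asymp Q$, which also swamps the main term, and the Weyl bound for $T(k\alpha)$ with denominator $q/d$ requires a rational approximation to $k\alpha$ of quality $(d/q)^{2}$, which does not follow from $|q\alpha-a|\le 1/Q$ once $k$ is large. The ``averaged form of Weyl's inequality'' you mention in your last paragraph is not an optional refinement; it is the core of Heilbronn's proof. The standard argument runs: assume $\|n^2\alpha\|>\delta$ for all $n\le N$, deduce $N\ll\sum_{k\le K}|T(k\alpha)|$ with $K=\delta^{-1}$, apply Cauchy--Schwarz and Weyl differencing to get $\sum_{k\le K}|T(k\alpha)|^2\ll NK+\sum_{m\le 2KN}\tau(m)\min(N,\|m\alpha\|^{-1})$, bound the last sum by $(M/q+1)(N+q\log q)$ with a Dirichlet approximation taken at the larger scale $M\asymp KN\asymp N^{3/2}$, and split at $q\asymp N^{1-\epsilon}$: for small $q$ one gets $\|q^2\alpha\|\le q/M\ll N^{-1/2+\epsilon}$ directly (with witness $q\le N$), while for large $q$ the mean-square bound produces the contradiction. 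Your Dirichlet step at scale $Q$ and dichotomy at $Q^{1/2}$ need to be replaced by this arrangement.
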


\begin{proof}
A proof can be found in \cite[p. 26]{Baker}.
\end{proof}

\begin{remark} 
Zaharescu \cite{Za} got the exponent $-4/7$ instead of $-1/2$. But for our purpose
the result of Heilbronn is sufficient and we even fix $\epsilon = 1/6$ for simplicity.
\end{remark}

\begin{proof}[Proof of Lemma \rm\ref{exp-prog}]
The idea is to cover $\{1,2,\ldots,N\}$ by long progressions in such a way that 
the function $h: n \mapsto e(\alpha_1 n^2 + \alpha_2 n)$ is almost constant on each progression.

By Lemma \ref{diophant} we can find $q \leq N^{3/4}$, such that $\|\alpha_2 q^2\| \leq C N^{-1/4}$
for some absolute constant $C \geq 1$.
We can assume that $N \geq (2^8C/\eta^2)^{16}$. 
Otherwise, the lemma is true for a singleton $P = \{n\}$ with $n \in \mathcal{A}$.
We decompose $\{1,2, \ldots, N\}$ in congruence classes modulo $q$ and split those 
into progressions $\{m + kq: 0 \leq k < K\}$ of length $K$ between $2^{-4}C^{-1}\eta N^{1/8}$ and $2^{-3}C^{-1}\eta N^{1/8}$.
The expression $\alpha_2 n^2 + \alpha_1 n$ varies on such a progression by at most
\begin{align*}
& \|(\alpha_2 (m+qk)^2 + \alpha_1 (m+qk)) - (\alpha_2 (m+ql)^2 + \alpha_1 (m+ql))\| \\
 =\ & \|(2\alpha_2 mq + \alpha_1 q)(k-l) + \alpha_2 q^2(k^2-l^2)\| \\
 \leq\ & \|(2\alpha_2 mq + \alpha_1 q)(k-l)\| + \|\alpha_2 q^2\|K^2 \leq\ \|\beta (k-l)\| + 2^{-6}\eta^2,
\end{align*}
where $\beta := 2\alpha_2 mq + \alpha_1 q$. For a given progression from the decomposition above,
the expression $\beta$ is constant and we have reduced our quadratic problem to a linear one.
By Dirichlet's approximation theorem we can find a value $r \leq K^{1/2}$ with $\|\beta r\| \leq K^{-1/2}$.
We partition the given progression into congruence classes modulo $r$ and then further in 
subprogressions $P_i$ ($i \in I$, where $I$ is a set of indices) of length between $2^{-6}\eta K^{1/2}$ and $2^{-5} \eta K^{1/2}$. 
This leads to
\begin{align*}
\|\beta (k-l)\| = \|\beta tr\| \leq 2^{-5}\eta.
\end{align*}

Observe that for $\|x_1-x_2\| \leq y$ we get $|e(x_1)-e(x_2)| \leq 2\pi y$.
Therefore, the function $h(n) = e(\alpha_1 n^2 + \alpha_2 n)$ varies at most $\eta/2$ on each of these progressions.
Now write $h(n) = h(m_i) + (h(n)-h(m_i))$ on
each progression $P_i$, where $m_i \in P_i$. 
Since $|h(n)-h(m_i)|$ is bounded by $\eta/2$, we have
\begin{align*}
\eta N &\leq |V_f(\alv)| = \Big|\sum_{n \leq N} f(n) h(n) \Big|
= \Big|\sum_{i \in I} \sum_{n \in P_i} f(n) h(n) \Big|\\
&\leq \Big|\sum_{i \in I} h(m_i) \sum_{n \in P_i} f(n) \Big| + \eta N/2
\leq \sum_{i \in I}  \Big| \sum_{n \in P_i} f(n) \Big| + \eta N/2.
\end{align*}
The average $\sum_{i \in I} \sum_{n \in P_i} f(n)$ is zero by definition of $f$.
We can add it to the right hand side and use the identity $|x| + x = 2 \max\{0,x\}$. This gives
\begin{align*}
\sum_{i \in I}  \max\Big\{0, \sum_{n \in P_i} f(n) \Big\} \geq \eta N/4.
\end{align*}
It follows that there has to be at least one $i \in I$ with $\sum_{n \in P_i} f(n) \geq \eta |P_i|/4$.
The result follows from the rearranging and using the formula $f(n) = 1_{\mathcal{A}}(n) - \delta$.
\end{proof}

\end{document}